\newtheorem{theorem}{Theorem}
\newtheorem{theorema}{Theorem}
\newtheorem{theoremb}{Theorem}
\newtheorem{theoremc}{Theorem}
\newtheorem{rk}[theorema]{Remark}
\newtheorem{lem}[theoremb]{Lemma}
\newtheorem{prop}[theoremc]{Proposition}
\newcommand\bib[1]{\bibitem[#1]{#1}}
\newcommand{\comm}[1]{}
\newcommand\1{{\bf 1}}
\renewcommand\a{\alpha}
\renewcommand\b{\beta}
\newcommand\C{{\mathbb C}}
\newcommand\e{\varepsilon}
\newcommand\ee{\textnormal{e}}
\newcommand\g{\mathfrak{g}}
\newcommand\h{\mathfrak{h}}
\renewcommand\H{\mathbb{H}}
\newcommand\hs{\hskip1pt}
\renewcommand\Im{{\textnormal{\texttt{Im}}}}
\renewcommand\l{\lambda}
\newcommand\La{\Lambda}
\newcommand\m{\mathfrak{m}}
\newcommand\oo{\omega}
\newcommand\op[1]{\mathop{\rm #1}\nolimits}
\newcommand\ot{\otimes}
\newcommand\p{\partial}
\newcommand\R{{\mathbb R}}
\renewcommand\Re{{\textnormal{\texttt{Re}}}}
\renewcommand\t{\tau}
\newcommand\te{\theta}
\newcommand\vp{\varphi}
\newcommand\we{\wedge}
\newcommand\z{\sigma}
\newcommand\Z{{\mathbb Z}}
\begin{document}

 \title[Homogeneous almost complex structures in 6D]{Homogeneous almost complex structures \\
 in dimension 6 with semi-simple isotropy}
 \author{D.\,V. Alekseevsky, B.\,S. Kruglikov, H. Winther}
 \maketitle

 \begin{abstract}
We classify invariant almost complex structures on homogeneous manifolds of dimension 6
with semi-simple isotropy. Those with non-degenerate Nijenhuis tensor have the automorphism group
of dimension either 14 or 9.
An invariant almost complex structure with semi-simple isotropy is necessarily either of
specified 6 homogeneous types or a left-invariant structure on a Lie group.
For integrable invariant almost complex structures we classify all compatible invariant Hermitian structures
on these homogeneous manifolds, indicate their integrability properties (K\"ahler, SNK, SKT)
and mark the other interesting geometric properties (including the Gray-Hervella type).
 \end{abstract}

\bigskip

% \begin{center}
% \framebox{\textsc{Pre-final draft}}
% \end{center}

\bigskip

 \section{Introduction and main results}

Consider an almost complex manifold $(M,J)$, $J^2=-\1$. If $M$ is closed, the automorphisms of $J$ form
a Lie group \cite{BKW}. In general the automorphism group can be infinite-dimensional, but
finite-dimensionality can be guaranteed by additional local (non-degeneracy of the Nijenhuis tensor
\cite{K$_1$}) or global (Kobayashi partial hyperbolicity \cite{Ko,KO}) conditions.

An almost complex structure is integrable if the Nijenhuis tensor $N_J$ vanishes \cite{NW}.
In real dimension 6 (complex dimension 3) non-degeneracy of the Nijenhuis tensor
means that $N_J:\La^2_\C TM\to TM$ is a ($\C$-antilinear) isomorphism. Such structures are important
in applications to critical points of the Hitchin-type functionals and nearly K\"ahler geometry \cite{Br,V}.

As proven in \cite{K$_2$} the local automorphism group $G$ of the structure $J$ on
$M^6$ with non-degenerate $N_J$ has dimension at most 14, and that this bound is
achieved only for the $G_2$-invariant almost complex
structures\footnote{We denote the compact real form of $G_2$ by $G_2^c\subset SO(7)$ and
the split real form with the trivial center by $G_2^*\subset SO(3,4)$.}:
either $G_2^c$-invariant $J$ on $S^6$ or
$G_2^*$-invariant $J$ on its non-compact version $S^{2,4}$.
It is natural to ask what is the next submaximal (= maximal among structures that are not locally $G_2$-invariant)
dimension of the automorphism group.

As one can expect this is still transitive, we confine to (locally) homogeneous structures.
But their classification is cumbersome, and so we restrict further by requiring the isotropy group $H$
to be semi-simple. This generalizes the assumption of \cite{Wo}. In this reference
the almost complex structures on $G/H$ with irreducible isotropy $H$ were classified.
Here we extend this classification in dimension 6. The obtained structures $J$ possess abundant
symmetries ($\dim G\ge9$) by construction.

 \begin{theorem}\label{Thm1}
The only homogeneous almost complex structures on the homogeneous space
$M^6=G/H$ with semi-simple isotropy group $H$
are (up to a covering and a quotient by a discrete central subgroup):
 \begin{enumerate}
\item[(I)\ \,] the homogeneous almost complex structure on $S^6=G_2^c/SU(3)$ or on $S^{2,4}=G_2^*/SU(1,2)$;
\item[(II$_1$)] 4-parametric family on $U(3)/SU(2)$, $U(2,1)/SU(2)$;
\item[(II$_2$)] 4-parametric family on $U(2,1)/SU(1,1)$, \\
                and 2-parametric on $GL(3)/SU(1,1)$;
\item[(III)] left-invariant almost complex structures on a 6D Lie group.
 \end{enumerate}
The tables of the latter structures are given in the Appendix, see also Theorem \ref{Thm3}.
The structures of type II are described in Section \ref{S5}.
 \end{theorem}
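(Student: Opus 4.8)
The plan is to translate the classification into a problem in the representation theory of semi-simple Lie algebras, followed by a Jacobi-identity analysis that reconstructs the Lie bracket on $\g=\h\oplus\m$, where $\m=\g/\h\cong\R^6$ carries the isotropy representation.

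First I would exploit the invariance of $J$. Since $J$ commutes with the isotropy action of $H$ on $\m$, it makes $\m$ into a $3$-dimensional complex $H$-module, so the isotropy factors through a homomorphism $H\to GL(3,\C)$. The first step is therefore to list all semi-simple $\h$ admitting an effective complex representation on $\C^3$. By the highest-weight classification the low-dimensional irreducible complex representations are rigid: dimension $2$ forces the standard module of $\mathfrak{sl}(2,\C)$, and dimension $3$ forces either the standard module of $\mathfrak{sl}(3,\C)$ or the symmetric square of the $\mathfrak{sl}(2,\C)$ standard module. Decomposing $\C^3$ into $\h$-irreducibles then yields a short list of isotropy types: the irreducible case ($\h=\mathfrak{su}(3)$ or a real form, or $\mathfrak{so}(3)$ acting by the symmetric square), the case $\C^3\cong\C^2\oplus\C$ with $\h=\mathfrak{su}(2)$ or $\mathfrak{su}(1,1)$ acting standardly on $\C^2$ and trivially on $\C$, and the totally reducible case $\C^3\cong\C\oplus\C\oplus\C$ in which $\h$ acts trivially.

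For each isotropy type I would compute the invariant complex structures as the intersection of the commutant $\op{End}_\h(\m)$ with $\{J:J^2=-\1\}$, governed by Schur's lemma. In the irreducible case the commutant is $\C$ in the compact type, so $J$ is unique up to sign; combined with the dimension bound and equality characterization of \cite{K$_2$} this produces exactly the $G_2$-invariant structures of type (I) on $S^6$ and $S^{2,4}$. In the case $\C^3\cong\C^2\oplus\C$ the commutant is strictly larger, and parametrizing the admissible $J$ yields the $2$- and $4$-parameter families of type (II). In the totally trivial case every complex structure on $\R^6$ is $H$-invariant, and since $H$ acts trivially it may be quotiented out, placing us in the left-invariant setting of type (III).

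The remaining and most substantial task is to determine, for each admissible pair $(\h,\m)$, which Lie algebras $\g=\h\oplus\m$ actually realize it. The brackets $[\h,\h]\subset\h$ and $[\h,\m]\subset\m$ are fixed (the latter is the isotropy representation), so the only freedom is the $H$-equivariant map $[\,\cdot\,,\cdot\,]\colon\La^2\m\to\h\oplus\m$. I would decompose $\La^2\m$ into $\h$-irreducibles, parametrize the equivariant components landing in $\h$ and in $\m$, and then impose the Jacobi identity. This cuts the parameter space down to the finitely many algebras in the statement and simultaneously fixes the global groups $G$ (namely $U(3)$, $U(2,1)$, $GL(3)$, $G_2^c$, $G_2^*$) together with the covering and discrete-center ambiguities. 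The main obstacle is precisely this reconstruction: the Jacobi identity is a quadratic system in the structure constants whose $H$-equivariant reduction, though finite, is delicate, and one must track all real forms of each complexified isotropy algebra (e.g.\ $\mathfrak{su}(2)$ versus $\mathfrak{su}(1,1)$), eliminate isomorphic duplicates, and test effectiveness of the $G$-action in order to separate the genuinely homogeneous families (I) and (II) from those that collapse to left-invariant structures on a $6$D group (III).
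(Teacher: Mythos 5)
Your overall skeleton---embed $\h$ as a semi-simple subalgebra of $\mathfrak{gl}_3(\C)$ via $J$-invariance of the isotropy action, decompose $\La^2\m$ into irreducibles, parametrize the equivariant bracket $\La^2\m\to\h\oplus\m$, and impose the Jacobi identity---is exactly the strategy of Theorem \ref{Thm3} and Section \ref{S2+}. The genuine gap is in your middle paragraph, where you assign the outcomes (I), (II), (III) to the three decomposition types of $\C^3$ \emph{before} any bracket reconstruction, and these assignments are wrong in a way that would derail the classification. The complex-irreducible isotropy modules are not exhausted by the $G_2$ cases: $\mathfrak{su}(2)$ and $\mathfrak{su}(1,1)$ acting by $\mathfrak{ad}^\C$ (your ``$\mathfrak{so}(3)$ by the symmetric square'') and $\mathfrak{sl}_2(\C)$ acting by $\mathfrak{ad}$ admit several non-flat Lie algebra extensions (Tables A2, A4, A6), producing for instance $S^3\times S^3=SU(2)^3/SU(2)_{\mathrm{diag}}$; these are of type (III), and the bound of \cite{K$_2$} cannot exclude them, since it only caps the symmetry dimension of a structure with non-degenerate $N_J$ at $14$, which $\dim SU(2)^3=9$ respects. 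Dually, the reducible module $V+\C$ does not yield only type (II): most of its extensions (A1.1--A1.3, A3.1--A3.4) have $[\m,\m]\subset\m$ and are again type (III). Finally, the totally trivial isotropy case is not the source of type (III) but is excluded outright, since a nontrivial semi-simple $\h$ acting trivially on $\m$ violates effectivity. The correct dichotomy---which your closing paragraph gestures at but your middle paragraph contradicts---is whether the reconstructed bracket makes $\m$ a subalgebra of $\g$ (type III) or not (types I and II); it cuts across all the representation types.

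A second, smaller omission: you assert that the Jacobi analysis ``simultaneously fixes the global groups $G$.'' Passing from the pair $(\g,\h)$ to a pair $(G,H)$ is not automatic---a Lie subalgebra inclusion need not integrate to a closed subgroup inclusion in general---and the paper needs a separate argument (the semi-direct product construction in the spirit of Cartan's proof of the third Lie theorem, or Palais' criterion) precisely in the type (III) cases, where $G$ is not handed to you as a classical or exceptional group. Your write-up should include this integration step.
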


 \begin{rk}
As written above, the possible $M^6$ are obtained from the universal covering group $G$ by
additional discrete quotient $M=\Gamma\backslash G/H$. These central subgroups $\Gamma\subset G$ can
be completely described. For instance, instead of $U(3)/SU(2)$ we get $G=\R^1\times SU(3)$,
$H=SU(2)$ and $\Gamma$ is one of the 4 obvious discrete subgroups of the center $Z(G)=\R\times\Z_3$.
Similarly, we get $G=\R\times\widetilde{SU(2,1)}$ or $\R\times\widetilde{SL(3)}$ for other type II cases.
However the invariant almost complex structure $J$ on $M=G/H$ depends in these cases only on 2 parameters, since the
torus gets covered by a cylinder, see the details of the construction in Section \ref{S5}.
 \end{rk}

 \begin{rk}
In dimension 4 all complex representations with semi-simple $H\subset GL(2,\C)$ lead to the flat structure
$G=H\ltimes\C^2$ and so $M^4=\C^2$. If we allow $H$ to be reductive, then 4 new cases appear:
 \begin{itemize}
\item $SU(3)/U(2)=\C P^2$,
\item $SU(2,1)/U(2)=B^4$,
\item $SU(2,1)/U(1,1)=\C P^2\setminus\{pt\}\simeq\C P^1\times\C$
\item $SL(3)/U(1,1)=\R P^2\times\R P^{2\vee}\setminus\mathbb{P}\{(v,p):v\cdot p=0\}\simeq T\R P^2$.
 \end{itemize}
In all these cases $J$ is integrable (complex structure).

Our method can be used to extend the classification to the reductive isotropy $H$ in dimension 6 as well,
which includes
such complex manifolds as $\C P^3=SU(4)/U(3)$, but the tables become rather big.
 \end{rk}

We will also examine the invariant (pseudo-)Riemannian and almost symplectic structures on these
homogeneous 6-manifolds $G/H$, specifying (in the case they are compatible with the almost complex structure)
which of them are Hermitian, K\"ahler, strongly nearly K\"ahler (SNK),
strongly K\"ahler with torsion (SKT) and discuss the Gray-Hervella classes of them.

The SNK condition is closely related with the condition of non-degeneracy of $N_J$
(recall that this means $N_J(\La^2TM)=TM$).
For the Calabi almost complex structure $J$ on $S^6$ it is known that its automorphism group
is the compact real form $G_2^c$. Similarly the split real form $G_2^*$
is the symmetry group of the homogeneous structure $J$ on the pseudo-sphere $S^{2,4}$
(in both cases $\dim\op{Aut}(M,J)=14$).
It turns out that for the other cases of Theorem~\ref{Thm1} with non-degenerate tensor $N_J$
the local symmetries of $J$ (and hence the global ones) are only the obvious ones.

 \begin{theorem}\label{Thm2}
Let $J$ be an invariant almost complex structure on the homogeneous space $M=G/H$
from Theorem \ref{Thm1}. Assume that the Nijenhuis tensor $N_J$ is non-degenerate and
 % (this can happen in any of types I, II, III)
that $J$ is not one of two $G_2$-invariant structures (thus $J$ is of types II or III).
Then the (local and global) automorphisms of $J$ in the connected component of unity
are only those coming from $G$, whence $\dim\op{Aut}(M,J)=9$.
 \end{theorem}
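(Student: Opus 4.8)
The plan is to reduce the computation of $\op{Aut}(M,J)$ to a linear problem at a single point and then read off the isotropy from the explicit homogeneous data. First I would invoke the finite-type property established in \cite{K$_2$}: when $N_J$ is non-degenerate the structure is rigid of finite type, so $\op{Aut}(M,J)$ is a Lie group and every (local) infinitesimal automorphism is determined by its $1$-jet at a point. Concretely, $N_J$ together with the canonical connection of $J$ produces an absolute parallelism on a principal bundle $P\to M$ whose structure group is the stabilizer of $N_J(o)$ in $GL(T_oM,J)$, a pseudo-unitary group of dimension $8$ (this is exactly what yields the bound $\dim\op{Aut}\le 6+8=14$ and the $G_2$-rigidity of \cite{K$_2$}). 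Symmetries of $(M,J)$ lift to symmetries of this parallelism, so an automorphism fixing the base point $o$ with trivial differential at $o$ must be trivial; hence the linear isotropy representation embeds the isotropy algebra $\mathfrak{aut}_o$ into $\mathfrak{gl}(T_oM,J)$.

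Next I would use transitivity. In each case of types II and III a transitive subgroup is already present (the group $G$ for type II, the left translations for type III), so $\op{Aut}(M,J)$ is transitive and $\dim\op{Aut}(M,J)=6+\dim\mathfrak{aut}_o$. The assertion of Theorem \ref{Thm2} is therefore equivalent to $\dim\mathfrak{aut}_o=3$. Since $\mathfrak{aut}_o$ preserves the whole jet of $J$ at $o$, it lies in the joint stabilizer $\op{Stab}(N_J(o))\cap\op{Stab}(\n N_J(o))\subset\mathfrak{gl}(T_oM,J)$. For type II it manifestly contains the $3$-dimensional $G$-isotropy $\h$ (namely $\mathfrak{su}(2)$ or $\mathfrak{su}(1,1)$), and the task is the reverse inclusion $\mathfrak{aut}_o\subseteq\h$; for type III the minimal transitive group is only $6$-dimensional, so here the claim $\dim\mathfrak{aut}_o=3$ is precisely the assertion that the left translations are enhanced by a $3$-dimensional isotropy to the full $9$-dimensional symmetry group.

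The core of the argument is this stabilizer computation, carried out case by case on the models of Section \ref{S5} and of the Appendix (Theorem \ref{Thm3}), and here the non-$G_2$ hypothesis is decisive. In the $G_2$ case $N_J(o)$ is the distinguished ``cross-product'' tensor whose stabilizer is the full $8$-dimensional $\mathfrak{su}(3)$ (resp. $\mathfrak{su}(1,2)$), and $\n N_J(o)$ is itself invariant under this stabilizer by the nearly-K\"ahler identity, so it imposes no further constraint and the maximal value $14$ results. For types II and III I would instead compute $N_J(o)$ and $\n N_J(o)$ from the structure constants and show that the first covariant derivative $\n N_J(o)$ -- equivalently, the curvature--torsion function of the canonical parallelism -- is a nonzero equivariant invariant whose stabilizer inside the $8$-dimensional $\op{Stab}(N_J(o))$ collapses to exactly the $3$-dimensional $\h$. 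Establishing this collapse uniformly across the several real forms is the main obstacle: one must verify that no one-parameter subgroup of the pseudo-unitary stabilizer of $N_J(o)$ lying outside $\h$ can fix $\n N_J(o)$, a genuine representation-theoretic check rather than a formal consequence of non-degeneracy.

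Once $\mathfrak{aut}_o=\h$ is established, the dimension formula gives $\dim\op{Aut}(M,J)=6+3=9$, and the inclusion $\mathfrak{aut}=\g+\mathfrak{aut}_o$ collapses to $\g$ (the $9$-dimensional group of Theorem \ref{Thm1}, for type III understood as the canonical enhancement of the left translations), so that $\op{Aut}^0$ coincides with $G^0$. The one point I would check carefully before running the finite-type reduction is that \cite{K$_2$} applies at every point of $M$ rather than only generically; this is guaranteed by the standing non-degeneracy of $N_J$, which holds on all of $M$ by homogeneity, so the parallelism and the $1$-jet determination are available globally and the above argument goes through without local exceptions.
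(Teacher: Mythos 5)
Your opening moves (the $1$-jet determination from \cite{K$_2$}, transitivity, and the reduction to showing $\dim\mathfrak{aut}_o=3$) coincide with the paper's starting point, but the core of your argument is never carried out, and as stated it is not clear that it can be. You propose to prove $\mathfrak{aut}_o=\h$ by showing that the joint stabilizer of $N_J(o)$ and $\nabla N_J(o)$ inside $\mathfrak{gl}(T_oM,J)$ collapses to the $3$-dimensional $\h$, and then you explicitly defer exactly this step (``the main obstacle \dots a genuine representation-theoretic check'') without performing it for a single case. That check \emph{is} the theorem. The plan also has two structural weaknesses. First, an almost complex structure carries no canonical connection by itself, so ``$\nabla N_J(o)$'' is not defined without further (nontrivial) constructions; and even granting a canonical first-order invariant, there is no a priori reason that the first derivative alone cuts the stabilizer down to $\h$ --- higher-order invariants might be needed, in which case your two tensors would only give $\mathfrak{aut}_o\subseteq\op{Stab}(N_J(o))\cap\op{Stab}(\nabla N_J(o))$ with the right-hand side possibly larger than $\h$, and the count $6+3=9$ would not follow. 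Second, your assertion that $\op{Stab}(N_J(o))$ is always an $8$-dimensional pseudo-unitary group is false: $8$ is attained only for the $G_2$-type normal form of $N_J$, while for the other canonical forms NDG(1--4) of \cite{K$_1$} the stabilizer is at most $5$-dimensional --- a fact the paper uses as the crucial dimension bound.

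For contrast, the paper's route avoids covariant derivatives entirely. From $\dim\tilde\h\le5$ in the non-$G_2$ case it deduces that the extra isotropy $\mathfrak{r}$ satisfies $\dim\mathfrak{r}\le2$ and can be taken to be the radical of $\tilde\h$; then $\h$-equivariance of the brackets, Schur's lemma and Levi decompositions show case by case that $\mathfrak{r}$ either acts non-effectively (hence vanishes) or acts by derivations of the Lie algebra $\m$ (Proposition \ref{Pro1}). The isotropy is then computed as $\mathfrak{der}(\m)\cap\mathfrak{gl}(\m,J)$ using the Whitehead lemma together with the observation that $\op{ad}_X$ and $\op{ad}_{JX}$ cannot both commute with $J$ when $N_J$ is non-degenerate, which forces $\mathfrak{sym}(J)_o$ to be totally real in $\m$ and hence at most $3$-dimensional; type II (A1.4, A3.5) is handled separately by showing $\mathfrak{r}$ acts non-effectively. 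To salvage your approach you would need either to construct the parallelism and verify the stabilizer collapse explicitly in each case of the Tables, or to switch to the Lie-algebraic bound on $\mathfrak{r}$ as the paper does.
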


 % As we explain in Section X these latter structures can be obtained by a modification of the construction,
 % resembling the Heisenberg algebra extended from 3 dimensions to 9 dimensions via the cross product.

Some calculations from this work used symbolic packages of Maple; the corresponding worksheets are
available from the authors. % upon request.

 \section{Classification result via representation theory}\label{S2}

Consider a homogeneous manifold $M=G/H$, i.e. a connected manifold $M$ on which a
connected Lie group $G$ acts transitively with the stabilizer $H$ of a point $o\in M$.
We will always assume that $G$ acts effectively on $M$, i.e.
no non-trivial subgroup of $H$ is normal in $G$
(in non-effective case we can quotient by the kernel of the action).

In this case the isotropy representation $j:H\to\op{GL}(T_oM)$ is almost faithful
(has finite kernel) provided the stabilizer group $H$ is reductive (in particular, semi-simple
that is our running assumption). When $M$ has a $G$-invariant almost complex structure $J$ whose
Nijenhuis tensor $N_J$ is non-degenerate, then this is also the case by \cite{K$_2$}.

Let $\g,\h$ be the Lie algebras of the Lie groups $G,H$ and $\m=T_oM$ the model tangent space of $G/H$,
$o=eH$. The isotropy representation makes the space $\m$ into $\h$-module. The
above data ($\h$ subalgebra, $\m$ representation) can be summarized in
the following exact 3-sequence of $\h$-modules
 $$
0\to\h\longrightarrow\g\longrightarrow\m\to0.
 $$
Using our hypothesis that $\h$ is semi-simple yields the splitting of the sequence:
we can find an embedding $\m\subset\g$ as an $H$-invariant complement. Thus
we get the reductive decomposition
 $$
\g=\h+\m,\quad [\h,\h]\subset\h,\ [\h,\m]\subset\m.
 $$
 % Moreover, $j$ is the adjoint representation $\op{Ad}_H|_{\m}$.

Our strategy for classification is to start with the pure algebraic data, and then reconstruct
the Lie groups $G,H$ and the manifold $M$ with its geometry (almost complex structure $J$ etc).

Reconstructing $\g=\h\oplus\m$ from the representation $(\h,\m)$ amounts to the following.
The brackets $\mathcal{B}_\h:\La^2\h\to\h$ and $\mathcal{B}_{\h,\m}:\h\we\m\to\m$ are given by
the Lie algebra structure of $\h$ and the $\h$-module structure of $\m$ respectively.
The only missing ingredient is the bracket $\mathcal{B}_\m:\La^2\m\to\g$,
and it determines the full bracket
 $$
\mathcal{B}=\mathcal{B}_\h+\mathcal{B}_{\h,\m}+\mathcal{B}_\m:
\La^2\g=\La^2\h\oplus(\h\we\m)\oplus\La^2\m\to\g=\h\oplus\m.
 $$

 \begin{lem}\label{coro1}
The Jacobi identity of the resulting bracket $\mathcal{B}:\La^2\g\to\g$, involving an element from $\h$,
is equivalent to $\h$-equivariancy of $\mathcal{B}_\m$.
If $\mathcal{B}_\m$ is $\h$-equivariant, then the bracket $\mathcal{B}$ defines the
Lie algebra structure on $\g$ iff the Jacobi map $\op{Jac}_\m:\La^3\m\to\g$ vanishes, where
 $$
\op{Jac}_\m(x,y,z)=\mathcal{B}(x,\mathcal{B}_\m(y,z))+\mathcal{B}(y,\mathcal{B}_\m(z,x))
+\mathcal{B}(z,\mathcal{B}_\m(x,y)).
 $$
 \end{lem}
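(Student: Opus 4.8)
The plan is to use that $\mathcal{B}$ is antisymmetric by construction (it is built as a map out of $\La^2\g$), so that only the Jacobi identity needs checking. Since the Jacobi expression is trilinear and $\g=\h\oplus\m$, I would test it on triples whose three entries each lie in $\h$ or in $\m$; these organize into four families according to how many arguments are taken from $\h$: $(\h,\h,\h)$, $(\h,\h,\m)$, $(\h,\m,\m)$, and $(\m,\m,\m)$. The first three families are precisely what is meant by ``the Jacobi identity involving an element from $\h$'', so the strategy is to show the first two impose nothing, while the third is equivalent to equivariancy, and that the last family is literally $\op{Jac}_\m=0$.

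For a triple in $\h$ the Jacobi expression is the Jacobi identity of $\h$ itself, which holds since $\h$ is a Lie algebra. For $(h_1,h_2,x)$ with $h_i\in\h$ and $x\in\m$, writing $\rho(h)=\mathcal{B}_{\h,\m}(h,\cdot)$ for the isotropy action, each inner bracket lands in $\m$ or $\h$ and a direct expansion collapses the Jacobi expression to
$$
\mathcal{B}_{\h,\m}\big(h_1,\mathcal{B}_{\h,\m}(h_2,x)\big)-\mathcal{B}_{\h,\m}\big(h_2,\mathcal{B}_{\h,\m}(h_1,x)\big)-\mathcal{B}_{\h,\m}\big(\mathcal{B}_\h(h_1,h_2),x\big),
$$
which is exactly the module axiom $\rho([h_1,h_2])=[\rho(h_1),\rho(h_2)]$ and hence holds by hypothesis. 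So these two families impose no constraint.

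The decisive case is $(h,x,y)$ with $h\in\h$, $x,y\in\m$. Splitting $\mathcal{B}_\m(x,y)$ into its $\h$- and $\m$-components $\mathcal{B}_\m^\h(x,y)+\mathcal{B}_\m^\m(x,y)$ and using $\mathcal{B}(h,x)=\rho(h)x$, $\mathcal{B}(y,h)=-\rho(h)y$, the Jacobi expression becomes
$$
\big[h,\mathcal{B}_\m^\h(x,y)\big]+\rho(h)\mathcal{B}_\m^\m(x,y)-\mathcal{B}_\m\big(\rho(h)x,y\big)-\mathcal{B}_\m\big(x,\rho(h)y\big).
$$
The first two terms are precisely $h\cdot\mathcal{B}_\m(x,y)$ for the adjoint $\h$-action on $\g=\h\oplus\m$, so the vanishing of this line is exactly the Leibniz (derivation) identity $h\cdot\mathcal{B}_\m(x,y)=\mathcal{B}_\m(\rho(h)x,y)+\mathcal{B}_\m(x,\rho(h)y)$ defining $\h$-equivariancy of $\mathcal{B}_\m:\La^2\m\to\g$. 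This establishes the first assertion. Finally, granting $\h$-equivariancy, the only remaining family $(\m,\m,\m)$ carries a Jacobi expression that is by definition $\op{Jac}_\m(x,y,z)$, so $\mathcal{B}$ is a Lie bracket iff $\op{Jac}_\m$ vanishes, which is the second assertion.

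The only genuine hurdle I anticipate is the bookkeeping in the mixed case $(h,x,y)$: one must track which inner brackets land in $\h$ and which in $\m$, and then recognize the two surviving ``$h\cdot$'' terms as the single adjoint action of $h$ on the $\g$-valued output $\mathcal{B}_\m(x,y)$. Once that identification is made, equating the expression to zero is verbatim the equivariancy of $\mathcal{B}_\m$, and the remaining cases require no further work.
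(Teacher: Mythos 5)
Your proposal is correct and follows essentially the same route as the paper: splitting the Jacobi identity into the four cases $(\h,\h,\h)$, $(\h,\h,\m)$, $(\h,\m,\m)$, $(\m,\m,\m)$, identifying the first with the Lie algebra axiom of $\h$, the second with the module axiom, the third with $\h$-equivariancy of $\mathcal{B}_\m$, and the last with $\op{Jac}_\m=0$. Your version merely writes out the mixed-case computation in more detail than the paper does.
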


The Lie algebra $\g$ defined by such $\mathcal{B}_\m$ is called {\it the Lie algebra extension\/}
of the $\h$-module $\m$.

 \begin{proof}
The Jacobi relation involving 3 elements from $\h$, $\op{Jac}_\h:\La^3\h\to\h$, vanishes as $\h$
is a Lie algebra. The Jacobi relation involving 2 elements from $\h$ and 1 from $\m$ vanishes
as $\m$ is an $\h$-representation. Finally the Jacobi relation involving 1 element from $\h$ and
2 from $\m$ is precisely the equivariancy of the map $\mathcal{B}_\m$.
 \end{proof}

Since $\h$ is semi-simple, the construction of $\mathcal{B}_\m\in\op{Hom}_\h(\La^2\m,\g)$ goes as follows.
Decompose into irreducible $\h$-modules (including the trivial):
$\La^2\m=\oplus\,r_i\cdot\mathfrak{u}_i=\oplus\,(\mathfrak{u}_i\otimes\R^{r_i})$,
$\g=\h\oplus\m=\oplus\,s_i\cdot\mathfrak{u}_i=\oplus\,(\mathfrak{u}_i\otimes\R^{s_i})$.
Then by Schur's lemma
 $$
\op{Hom}_\h(\La^2\m,\g)=\bigoplus\mathfrak{gl}_{\h}(\mathfrak{u_i})\otimes\op{Hom}(\R^{r_i},\R^{s_i})
=\oplus\op{Mat}_{s_i\times r_i},
 $$
where the space $\op{Mat}_{s\times r}$ consists of real, complex or quaternionic $s\times r$ matrices
(the algebra of splitting operators $\mathfrak{gl}_{\h}(\mathfrak{u_i})=\R$, $\C$ or $\H$).

Thus $\h$-equivariancy of $\mathcal{B}_\m$ can be effectively checked via the representation theory.
On the contrary, vanishing of $\op{Jac}_\m(\La^3\m)$ is a set of linear and quadratic relations on $\mathcal{B}_\m$
to be checked directly (representation theory can help here too: as $\op{Jac}_\m$ is $\h$-equivariant,
we can decompose $\La^3\m,\g$ into irreducibles and apply Schur's lemma).

Invariant almost complex structures $J$ on $M$ bijectively corresponds to $\h$-invariant tensors
(here endomorphisms\footnote{Endomorphisms are $\R$-linear and $\h$-equivariant transformations of the module.})
 $$
J\in(\m^*\ot\m)^\h=\op{End}_\h(\m)\ \text{ with }\ J^2=-\1.
 $$
Similarly, invariant pseudo-Riemannian metrics and almost symplectic structures on $M$
are in bijective correspondence with non-degenerate $\h$-invariant tensors
$g\in(S^2\m^*)^\h$ and $\omega\in(\Lambda^2\m^*)^\h$ respectively.

\smallskip

Our aim is to classify 6-dimensional homogeneous manifolds $M=G/H$ with
semisimple $H$, admitting an invariant almost complex structure $J$.
Let $\g=\h+\m$ be the associated reductive decomposition. By effectivity
the isotropy representation $\op{ad}:\h\to\mathfrak{gl}(\m)$ is exact
(due to this all elements of $\g$ act as non-trivial symmetries)
and it preserves the complex structure $J$ on $\m$. Therefore we identify
$\h\subset\mathfrak{gl}(\m,J)\simeq\mathfrak{gl}_3(\C)$.

 \subsection{Classification result}

Our strategy is the following:

\smallskip

1. Enumerate all semi-simple subalgebras $\h\subset\mathfrak{gl}_3(\C)$, hence, all
6-dimensional $\h$-modules $\m$ with an invariant complex structure $J$.

2. Describe all $\h$-equivariant linear maps $\mathcal{B}_\m:\La^2\m\to\g$ by
decomposing the module $\La^2\m$ into irreducible submodules.

3. Compute all Lie algebra extensions $\g$ of the $\h$-module $\m$ by solving the
equations $\op{Jac}_\m=0\in\La^3\m^*\ot\g$ on the parameters in $\mathcal{B}_\m$.

4. Determine the homogeneous almost complex manifolds $M=G/H$ associated with the Lie algebra
$\g=\h+\m$ and the complex structure $J$.

\smallskip

The trivial bracket $\mathcal{B}_\m=0$ defines the semidirect product Lie algebra $\g=\h\ltimes\C^3$
corresponding to the manifold $M=\C^3$ with the standard complex structure and the obvious
action of the semi-direct Lie group $G=H\ltimes\C^3$. We call such structure {\it flat\/}
and exclude them from consideration.

\smallskip

Below we use the following notations.
For a classical simple Lie algebra $\h$ denote by $V$ the standard (tautological) $\h$-module.
It has the natural complex structure if $\h$ is $\mathfrak{sl}_2(\C)$, $\mathfrak{sl}_3(\C)$,
$\mathfrak{su}(3)$ or $\mathfrak{su}(2,1)$.
For $\h=\mathfrak{su}(2)$ we identify the module $V$ with the space $\H$ of quaternions
and the algebra $\h$ with imaginary quaternions $\Im(\H)$ acting from the left.
Similarly for $\h=\mathfrak{su}(1,1)$ we identify the module $V$ with the space $\H_s$ of
split quaternions and the algebra $\h$ with split imaginary quaternions $\Im(\H_s)$ acting from the left.
The space of invariant complex structures on $V$ consists of right multiplications $R_q:x\mapsto xq$ by
a quaternion $q$ with $q^2=-1$. So they are parametrized by the unit sphere
$\mathbb{S}^2\subset\Im(\H)$ in the first case and the unit pseudosphere $\mathbb{S}^{1,1}\subset\Im(\H_s)$
in the second case.

We identify the trivial 2-dimensional representation of $\h$ and a complex structure $J$ with
the standard pair $(\C,i)$.
We denote by $\mathfrak{ad}$ the adjoint representation of the Lie algebra $\h$.
If $\h$ is real, the invariant complex structures on the module
$\mathfrak{ad}^\C=\mathfrak{ad}\oplus\mathfrak{ad}$ are parametrized by
 \begin{equation}\label{J}
J({\bf v},{\bf w})=\bigl(r\,{\bf v}-\tfrac{1+r^2}t\,{\bf w},t\,{\bf v}-r\,{\bf w}\bigr),
 \end{equation}
the same concerns the complexified tautological representation $V^\C$ of $\mathfrak{sl}_3(\R)$.

 \begin{theorem}\label{Thm3}
There are 7 different real semi-simple subalgebras in the complex Lie algebra $\mathfrak{gl}_3(\C)$
(up to conjugation):
 \begin{itemize}
\item $\h=\mathfrak{su}(2)$ or $\mathfrak{su}(1,1)$, representations $V+\C$, $\mathfrak{ad}^\C$;
\item $\h=\mathfrak{sl}_2(\C)$, representations $V+\C$, $\mathfrak{ad}$;
\item $\h=\mathfrak{sl}_3(\R)$, representation $V^\C$;
\item $\h=\mathfrak{su}(3)$ or $\mathfrak{su}(2,1)$ or $\mathfrak{sl}_3(\C)$, representation $V$.
 \end{itemize}
For the cases of $\mathfrak{su}(2)$, $\mathfrak{su}(1,1)$ the possible
Lie algebras $\g$ with the specified representations $\m$ are tabulated in the Appendix.
For $\mathfrak{sl}_2(\C)$ the adjoint representation gives only $\g=\mathfrak{sl}_2(\C)\oplus\mathfrak{sl}_2(\C)$
(so that $M=\op{SL}_2(\C)\oplus\op{SL}_2(\C)/\op{SL}_2^{\op{diag}}(\C)$), while $V+\C$ leads to 2 cases for
$\g$ from the Appendix. For $\mathfrak{su}(3)$ the corresponding $\g$ is the Lie algebra of the exceptional
group $G_2^c$. For $\mathfrak{su}(2,1)$ the corresponding $\g$ is the Lie algebra of the exceptional
group $G_2^*$. The other cases $\mathfrak{sl}_3(\R)$ and $\mathfrak{sl}_3(\C)$
give only the flat structures (in which case $M^6=\C^3$ or its quotient).
 \end{theorem}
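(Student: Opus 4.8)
The plan is to separate the problem into two parts: a purely representation-theoretic classification of the admissible pairs $(\h,\m)$, and then, for each pair, the determination of all Lie algebra extensions $\g$ by the procedure of Lemma \ref{coro1}.

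\textbf{Classifying the subalgebras.} Since $\h$ acts effectively and $\C$-linearly on $\m=T_oM$, it embeds into $\mathfrak{gl}(\m,J)\simeq\mathfrak{gl}_3(\C)$, so $\m$ is a faithful $3$-dimensional complex $\h$-module. First I would decompose $\m$ into complex-irreducible submodules; the dimension types are $3$, $2+1$ and $1+1+1$. As $\h$ is semi-simple it admits no non-trivial $1$-dimensional complex representation, so the type $1+1+1$ forces a trivial (hence non-faithful) action and is discarded. In type $2+1$ the nontrivial summand is a faithful $2$-dimensional complex irreducible, so $\h$ is a real semi-simple subalgebra of $\mathfrak{sl}_2(\C)$ acting irreducibly on $\C^2$; up to conjugation these are $\mathfrak{su}(2)$, $\mathfrak{su}(1,1)$ and $\mathfrak{sl}_2(\C)$ itself, each with $\m=V+\C$. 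In type $3$ the module is a faithful $3$-dimensional complex irreducible, so $\h\subset\mathfrak{sl}_3(\C)$; over $\C$ the only simple Lie algebras with such a representation are $\mathfrak{sl}_3$ (standard or dual) and $\mathfrak{sl}_2$ (adjoint $=S^2V$), whence the real possibilities are the real forms $\mathfrak{su}(3),\mathfrak{su}(2,1),\mathfrak{sl}_3(\R)$ of $\mathfrak{sl}_3$ acting on $V$ (resp.\ $V^\C$), together with $\mathfrak{su}(2),\mathfrak{su}(1,1),\mathfrak{sl}_2(\C)$ acting via $\mathfrak{ad}^\C$ (resp.\ $\mathfrak{ad}$). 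Collecting the distinct algebras yields exactly the seven of the statement.

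\textbf{Computing the extensions.} For each pair I would follow the four-step strategy preceding the theorem: decompose $\La^2\m$ and $\g=\h\oplus\m$ into $\h$-irreducibles, compute $\op{Hom}_\h(\La^2\m,\g)$ by Schur's lemma (recording whether each endomorphism algebra is $\R$, $\C$ or $\H$), impose $\h$-equivariance on $\mathcal{B}_\m$, and then solve the quadratic system $\op{Jac}_\m=0$ on the surviving parameters. The decisive observation is that an irreducible type occurring in $\La^2\m$ but not in $\g$ annihilates the corresponding block of $\mathcal{B}_\m$, so the comparison of constituents governs which structures are flat.

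\textbf{Identifying $\g$.} I expect this comparison to already kill $\mathfrak{sl}_3(\R)$ and $\mathfrak{sl}_3(\C)$: there the half-modules $\m^{1,0},\m^{0,1}$ reproduce the standard module rather than its dual, so $\La^2\m$ is assembled from $V^*$ and $S^2V$, neither of which occurs among the constituents $\h,\m$ of $\g$; hence $\op{Hom}_\h(\La^2\m,\g)=0$, $\mathcal{B}_\m=0$, and only the flat $M^6=\C^3$ remains. For $\mathfrak{su}(3)$ the situation differs because $\m^{0,1}\cong V^*$, so $\La^2\m$ contains summands isomorphic to $\mathfrak{su}(3)$ and to $\m$, both present in $\g$; the resulting low-parameter family of equivariant brackets is then cut down by $\op{Jac}_\m=0$ to a single non-flat algebra up to scale, which I would identify with the Lie algebra of $G_2^c$ via its Killing form and root data (equivalently, by recognizing the Calabi structure on $S^6=G_2^c/SU(3)$), the split signature giving $G_2^*$ for $\mathfrak{su}(2,1)$. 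For $\mathfrak{sl}_2(\C)$ with $\mathfrak{ad}$ the same analysis should leave only the diagonal extension $\g=\mathfrak{sl}_2(\C)\oplus\mathfrak{sl}_2(\C)$, while the remaining families for $\mathfrak{su}(2)$, $\mathfrak{su}(1,1)$ and $\mathfrak{sl}_2(\C)$ with $V+\C$ are to be tabulated in the Appendix. The hard part will be this last step: executing the Jacobi analysis case by case, bookkeeping the $\R/\C/\H$ Schur blocks and the compatible complex structures \eqref{J}, and---most delicately---certifying that the unique non-flat extensions of $\mathfrak{su}(3)$ and $\mathfrak{su}(2,1)$ are precisely $\g_2^c$ and $\g_2^*$.
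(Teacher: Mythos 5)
Your proposal follows essentially the same route as the paper: the seven subalgebras are read off from the decomposition type of the faithful $3$-dimensional complex module, the flatness of the $\mathfrak{sl}_3(\R)$ and $\mathfrak{sl}_3(\C)$ cases is detected by the absence of common irreducible constituents in $\La^2\m$ and $\g$, and the remaining cases are settled by the equivariance-plus-Jacobi analysis of $\mathcal{B}_\m$ that the paper itself describes as a straightforward but lengthy calculation (carried out there via quaternionic identifications for $V+\C$ and via the Levi decomposition of $\g$ for the $\mathfrak{ad}^\C$ cases, with the $\mathfrak{su}(3)$, $\mathfrak{su}(2,1)$ identifications deferred to \cite{K$_2$}). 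The only substantive distance from a complete proof is the unexecuted case-by-case Jacobi computation producing the tables, which is exactly the part the paper also only sketches.
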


The proof of this theorem is a straightforward (but lengthy) calculation,
we sketch it in the next section.

 \subsection{Proof of Theorem \ref{Thm1}}

On the level of Lie algebras Theorem \ref{Thm1} follows instantly from Theorem \ref{Thm3}
and the tables from the Appendix.

The passage to the Lie groups is straightforward for types I and II, because in these cases
we indicate the pair $(G,H)$, and it remains to treat only different discrete quotients.
But for type III we need to establish existence of the Lie group $G$ such that $\h$ exponentiates into
its Lie subgroup. In general for a Lie groups $H$ with a homomorphic embedding of Lie algebras
$\iota:\h=\op{Lie}(H)\subset\g$ there does not exist a Lie group $G$ with $\g=\op{Lie}(G)$
and a homomorphic embedding $H\hookrightarrow G$ having differential $\iota$,
see counter-examples and discussion in \cite{B,GOV}.

In our case, however the Lie functor works nicely. The output of Theorem \ref{Thm3}
yields the structure of Lie algebra on $\g=\h+\m$, and we should consider the cases,
when $\m$ is closed with respect to these brackets, i.e. $[,]_\m^\h=0$ in terms of
splitting of the bracket $\mathcal{B}_\m$. Indeed, in the cases when
$\m\subset\g$ is not a Lie subalgebra (types I and II in Theorem \ref{Thm1})
we have an obvious Lie subgroup $H\subset G$ such that $M=G/H$
is the required homogeneous almost complex manifold.

 \begin{prop}
The pairs $(\g,\h)$ of Lie algebra/subalgebra from the Tables of the Appendix with $\m$ being a Lie
algebra correspond to the pairs $(G,H)$ of Lie group/subgroup of type III in Theorem \ref{Thm1}.
 \end{prop}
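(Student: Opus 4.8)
The plan is to exploit a structural simplification that is available precisely when $\m$ is a subalgebra. Recall that the reductive decomposition already gives $[\h,\h]\subset\h$ and $[\h,\m]\subset\m$. If in addition $\m$ is a Lie subalgebra, i.e. $[\m,\m]\subset\m$ (equivalently $[\,,\,]_\m^\h=0$ in the splitting of $\mathcal{B}_\m$, as noted just above the Proposition), then $[\g,\m]=[\h,\m]+[\m,\m]\subset\m$, so $\m$ is in fact an \emph{ideal} of $\g$. Hence $\g=\h\ltimes\m$ is a semidirect sum, with the semisimple subalgebra $\h$ acting on the ideal $\m$ by the derivations $\op{ad}:\h\to\op{Der}(\m)$. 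This observation is the key step: it is what makes the Lie functor behave well here and lets us bypass the pathologies of \cite{B,GOV}.

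Next I would realize the pair $(\g,\h)$ at the group level by an explicit semidirect product. Let $M$ be the simply connected Lie group with $\op{Lie}(M)=\m$ and let $H$ be the simply connected group with $\op{Lie}(H)=\h$. Since $M$ is simply connected, the Lie algebra homomorphism $\h\to\op{Der}(\m)$ integrates to a homomorphism $H\to\op{Aut}(M)$, and one forms $G=H\ltimes M$. Then $\op{Lie}(G)=\g$ by construction, $H\hookrightarrow G$ is a closed embedded subgroup whose differential is the given inclusion $\h\hookrightarrow\g$, and $M\triangleleft G$ is a closed normal subgroup with $M\cap H=\{e\}$ and $G=MH$. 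In particular the obstruction to integrating the subalgebra $\h$ to an embedded subgroup of a group with Lie algebra $\g$ does not arise in this case.

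I would then identify the homogeneous space together with its almost complex structure. Since $G=MH$ with $M\cap H=\{e\}$, every coset $gH$ has a unique representative in $M$, so the orbit map $M\to G/H$, $m\mapsto mH$, is a diffeomorphism intertwining left translation on $M$ with the action of $M\subset G$ on $G/H$. Thus $M$ acts simply transitively on $M\cong G/H$, and the $G$-invariant tensor $J$ restricts to a \emph{left-invariant} almost complex structure on the $6$-dimensional Lie group $M$, determined by $J|_\m\in\op{End}_\h(\m)$. This is exactly a structure of type III. Conversely, starting from the type III data---a left-invariant $J$ on the $6$D group $M$ together with the semisimple algebra $\h$ of $J$-preserving derivations read off from the table---one recovers $(\g,\h)=(\h\ltimes\m,\h)$, which gives the claimed correspondence.

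Finally I would attend to the global bookkeeping, which I expect to be the only delicate point. Effectivity of the $G$-action on $M$ is equivalent to faithfulness of $\op{ad}:\h\to\mathfrak{gl}(\m)$; this holds for the relevant table entries (and otherwise one quotients by the finite kernel), so $H$ is almost faithful as required. The remaining care is to pass from the simply connected models to the actual groups of Theorem~\ref{Thm1}: for non-simply-connected choices of $H$ the intersection $M\cap H$ becomes a nontrivial discrete central subgroup, which is what accounts for the ``up to a covering and a quotient by a discrete central subgroup'' clause and must be matched case by case against the tables, exactly as in the Remark following Theorem~\ref{Thm1}. The main obstacle is therefore not the existence of the pair $(G,H)$---which the semidirect structure hands us for free---but the precise determination of the admissible discrete quotients and global forms for each table entry.
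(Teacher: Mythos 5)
Your proposal is correct and follows essentially the same route as the paper: the paper also integrates the representation $\rho:\h\to\op{End}(\m)$ to a homomorphism $R:H\to GL(\m)$ (citing Proposition 4.2 of \cite[Chapter 2]{VO}) and realizes $G$ as the semidirect product $H\ltimes_R M$ with $M$ the simply connected group of $\m$, so that $G/H\simeq M$ carries the left-invariant structure. Your added remarks --- that $\m$ is automatically an ideal once it is a subalgebra, and the bookkeeping of discrete quotients --- are consistent with, and only slightly more explicit than, what the paper does.
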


This statement concerns the cases A1-A6 from the Tables except for the cases
A1.4 and A3.5, which correspond to type II (case A6 is rather simple and was already
discussed in Theorem \ref{Thm3}).

 \begin{proof}
Let $M$ be the simply connected Lie group corresponding to $\m$ (for the representation $V+\C$
the Lie algebra $\m$ is solvable and so $M\simeq\R^6$ as a manifold, for the representation $\mathfrak{ad}^\C$
the choice of $M$ is obvious). Consider the representation $\rho:\h\to\op{End}(\m)$. Then there exists
a homomorphism $R:H\to GL(\m)$ such that $dR=\rho$. By virtue of Proposition 4.2 of
\cite[Chapter 2]{VO} the semi-direct product $G=H\ltimes_R M$ is the desired simply-connected Lie
group. The main idea of this approach follows Cartan's proof of the third Lie theorem \cite{C}.

Another proof is based on the Palais' criterion \cite{P} for a transformation group to be a Lie group.
Namely, $M$ acts on itself by left translations and $H\subset\op{Diff}(M)$ is a closed subgroup
(as it is the stabilizer of the unity and closed in $GL(T_eM)$). Both $M$ and $H$ generate a closed
subgroup in the group of diffeomorphisms, which is a Lie group $G$ with $\op{Lie}(G)=\g=\h\ltimes\m$.
 \end{proof}

Thus, when $\m\subset\g$ is a Lie subalgebra we also get representation $M=G/H$ and
this finishes the proof of Theorem \ref{Thm1}.

 \section{Proof of the classification result}\label{S2+}

Representation part of Theorem \ref{Thm3} (list of 7 cases) is obvious from the general
theory of representations of semi-simple Lie algebras. The hard part is the
reconstruction of possible Lie brackets on $\g=\h+\m$.

Let us consider the first of the cases, when $\h=\mathfrak{su}(2)$ and the isotropy as a complex
representation has type $\m=V+\C$. We can identify $\h$ with the Lie algebra of imaginary
quaternions $\Im(\H)$ and $V$ with the left $\h$-module $\H$. The endomorphism
ring of $V$ is the algebra $\H$ acting from the right (we will use this freedom to change the basis in $V$).
The following easy claim will be used repeatedly.

 \begin{lem}\label{Le2}
The $\h$-invariant complex structure $J$ on the module $\m=V+\C$ is given by
the formula
 $$
J(x,\eta)=(xq,i\eta),\quad x\in V=\H,\eta\in\C,
 $$
where $q\in\mathbb{S}^2\subset\Im(\H)$ is a unit imaginary quaternion.
 \end{lem}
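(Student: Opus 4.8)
The plan is to reduce everything to a Schur-lemma decomposition of $\op{End}_\h(\m)$ and then impose the condition $J^2=-\1$ block by block. First I would observe that, as a real $\h$-module, $\m=V+\C$ is a direct sum of two \emph{non-isomorphic} irreducibles: the tautological module $V=\H$, which is four-dimensional and real-irreducible over $\mathfrak{su}(2)$ (the realification of a complex irreducible of quaternionic type), and the trivial two-dimensional module $\C$. Since these summands are inequivalent (already by dimension count), Schur's lemma forces $\op{Hom}_\h(V,\C)=\op{Hom}_\h(\C,V)=0$, so every $\h$-equivariant endomorphism is block-diagonal. In particular any invariant almost complex structure splits as $J=J_V\oplus J_\C$ with $J_V\in\op{End}_\h(V)$ and $J_\C\in\op{End}_\h(\C)$.

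Next I would identify the two blocks. As already recalled in the text, $\op{End}_\h(V)$ is the quaternion algebra $\H$ acting on $V=\H$ from the right, so $J_V=R_q$ is right multiplication by some $q\in\H$; while the trivial action on $\C$ gives $\op{End}_\h(\C)=\op{End}_\R(\R^2)=\mathfrak{gl}_2(\R)$. The condition $J^2=-\1$ then separates into two independent conditions. On $V$ one has $R_q^2=R_{q^2}$, whence $R_q^2=-\1$ is equivalent to $q^2=-1$, which holds precisely when $q$ is a unit imaginary quaternion, $q\in\mathbb{S}^2\subset\Im(\H)$. On the trivial factor the condition says merely that $J_\C$ is a linear complex structure on $\R^2$.

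Finally I would normalize the $\C$-block. A priori $J_\C$ is an arbitrary complex structure on $\R^2$, but all such structures form a single $GL_2(\R)$-conjugacy class: given any $J_\C$, the pair $(v,J_\C v)$ is a real basis in which $J_\C$ takes the standard form, and this change of basis on the trivial factor is automatically $\h$-equivariant since the action there is trivial. Invoking the standing convention that identifies the trivial two-dimensional module with the pair $(\C,i)$, we may therefore take $J_\C=i$, yielding the asserted formula $J(x,\eta)=(xq,i\eta)$.

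The only point demanding a little care is the treatment of the $\C$-summand: one must note that fixing $J_\C=i$ is a legitimate normalization by basis choice (consistent with the paper's convention), not a genuine restriction on the class of invariant structures. Everything else is a direct application of Schur's lemma together with the identity $R_q^2=R_{q^2}$, so I anticipate no substantial obstacle.
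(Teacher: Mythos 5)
Your argument is correct and is essentially the one the paper intends: the lemma is stated there without a formal proof, but the surrounding discussion (the endomorphism ring of $V$ being $\H$ acting from the right, and the remark immediately after the lemma about fixing one of the two-parameter family of complex structures on the trivial $\R^2$-summand to be $i$) follows exactly your Schur-lemma block decomposition plus the identity $R_q^2=R_{q^2}$. Your explicit care with the normalization of the $\C$-block is consistent with the paper's stated convention, so there is nothing to add.
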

 % We will denote $J$ by the symbol $(q,i)$ and remember its action from the right.
It is possible to fix $q$ to be equal to $i\in\H$ by an endomorphism, but as noticed above we use
this freedom to simplify the brackets. On the other hand, though on the 2-dimensional trivial
$\h$-module $\R^2$ there is a 2-parametric family of complex structures, we fix one
(equal to $i$, turning this submodule into $\C$) as this freedom does not help
to simplify the brackets.

 % The invariant complex structure on $V$ is thus an element $q\in\H$ with $q^2=-1$, so
 % it belongs to the unit sphere $q\in\mathbb{S}^2\subset\Im(\H)$. On the submodule $\C$
 % the complex structure is unique up to sign and we choose it to be $i$. Thus the complex
 % structure on $\m=V+\C$ is given by
 %  $$
 % J(x,\ee)=(xq,i\ee),\quad x\in V,\ee\in\C.
 %  $$

The first task in constructing the map $\mathcal{B}_\m$ is to decompose the $\h$-module
$\La^2\m=\La^2(V+\C)=\La^2V+V\ot\C+\La^2\C$ into irreducibles.
From the representation viewpoint (without complex structure $J$) $\C$ is the trivial
2-dimensional real module $\R^2$, so $\La^2\C\simeq\R^1$ and $V\ot\C=V+V$.

 \begin{lem}
As an $\h$-module $\La^2V=\mathfrak{ad}+\R^3$.
 \end{lem}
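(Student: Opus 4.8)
The plan is to exhibit two explicit three-dimensional $\h$-submodules of $\La^2V$, identify one with the adjoint representation $\mathfrak{ad}$ and the other with a trivial module $\R^3$, and check that together they fill out the six-dimensional space $\La^2V$.

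Recall that $V=\H$ carries the left $\h=\Im(\H)$-action and the invariant inner product $\langle x,y\rangle=\Re(\bar xy)$, for which left multiplication by an imaginary quaternion is skew-adjoint; thus the $\h$-action lands in $\mathfrak{so}(V)\cong\mathfrak{so}(4)$. For $q\in\{i,j,k\}$ I would introduce the skew-symmetric $2$-forms
$$
\sigma_q(x,y)=\langle x,qy\rangle,\qquad\omega_q(x,y)=\langle x,yq\rangle .
$$
First I would verify, using only skew-adjointness of left multiplication together with associativity $a(yq)=(ay)q$, that every $\omega_q$ is $\h$-invariant, i.e. $\omega_q(ax,y)+\omega_q(x,ay)=0$ for all $a\in\Im(\H)$. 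Hence $\op{span}\{\omega_i,\omega_j,\omega_k\}$ is a trivial submodule $\R^3$. A parallel computation gives $\sigma_q(ax,y)+\sigma_q(x,ay)=\langle x,[q,a]y\rangle$, so that $a\cdot\sigma_q=\sigma_{[q,a]}$; this is precisely the adjoint action of $\h=\Im(\H)$ on $q$, whence $\op{span}\{\sigma_i,\sigma_j,\sigma_k\}\cong\mathfrak{ad}$.

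It then remains to see that these six forms are linearly independent. Writing them in the orthonormal basis $1,i,j,k$ one recognizes the $\sigma_q$ and the $\omega_q$ as the standard bases of the anti-self-dual and self-dual $2$-forms $\La^2_-$ and $\La^2_+$; since $\La^2V=\La^2_+\oplus\La^2_-$ these are independent and span. This yields $\La^2V=\mathfrak{ad}\oplus\R^3$.

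Conceptually the statement is nothing but the restriction to the left factor of the classical splitting $\La^2\R^4=\La^2_+\oplus\La^2_-$ under $\mathfrak{so}(4)\cong\mathfrak{su}(2)_L\oplus\mathfrak{su}(2)_R$, in which left and right quaternionic multiplication realize the two summands $\mathfrak{su}(2)_L$ and $\mathfrak{su}(2)_R$: each factor acts on one of $\La^2_\pm$ as its own adjoint and trivially on the other. The computations above are entirely routine; the only point needing care --- the sole ``obstacle'' --- is the bookkeeping of quaternionic sign conventions, namely getting skew-adjointness right and correctly matching $\mathfrak{ad}$ and $\R^3$ to the self-dual and anti-self-dual pieces.
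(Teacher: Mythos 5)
Your proof is correct and is essentially the paper's own (second, direct) argument: your forms $\sigma_q(x,y)=\Re(\bar xqy)$ and $\omega_q(x,y)=\Re(\bar xyq)$ coincide, up to conjugation and an overall sign, with the paper's $\oo^b$ and $\oo_b$, and you identify the same two $3$-dimensional submodules of types $\mathfrak{ad}$ and $\R^3$ (your formula $a\cdot\sigma_q=\sigma_{[q,a]}$ has the commutator reversed relative to the standard dual-action convention, but this does not affect the identification). The only cosmetic difference is that you conclude linear independence via the self-dual/anti-self-dual splitting of $\La^2\R^4$, whereas the paper observes that submodules of distinct isomorphism types intersect trivially and counts dimensions.
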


 \begin{proof}
Let us give two proofs, one via the representation theory of simple Lie algebras and the other
straightforward.

The complexified Lie algebra $\h^\C=\mathfrak{sl}(2,\C)$ has the same standard representation $V$
(it is not absolutely irreducible). Changing the real form to $\h'=\mathfrak{su}(1,1)\simeq\mathfrak{sl}(2,\R)$ we obtain by the highest weight decomposition
$V=W+W$, where $W$ is the standard representation of $\mathfrak{sl}(2,\R)$. Now
$\La^2(W+W)=\La^2(W)\ot\R^2+W\ot W=S^2W+\R^3=\mathfrak{ad}+\R^3$. This induces the above decomposition.

A more direct proof is as follows.
Let us identify $V\simeq V^*$ using the $\h$-invariant metric on $V=\H$: $g(x,y)=\Re(x\bar{y})$,
$x,y\in V$. Consider the 2-forms $\oo_b,\oo^b\in\La^2V^*$ given by
 $$
\oo_b(x,y)=\Re(xb\bar{y}),\quad \oo^b(x,y)=\Re(x\bar{y}b),\qquad b\in\Im(\H)
 $$
(check both are skew-symmetric!).

The group $H=SU(2)\simeq\mathbb{S}^2\subset\Im(\H)$ acts on them as follows ($q\in H$)
 \begin{gather*}
\oo_b(qx,qy)=\Re(qxb\,\overline{qy})=\Re(qxb\bar{y}\bar{q})=\Re(xb\bar{y}|q|^2)=\oo_b(x,y),\\
\oo^b(qx,qy)=\Re(qx\,\overline{qy}\,b)=\Re(qx\bar{y}\bar{q}b)=\Re(x\bar{y}q^{-1}bq)=\oo_{\op{Ad}_q^{-1}b}(x,y).
 \end{gather*}
Consequently 6-dimensional $\La^2V^*$ has two 3-dimensional submodules $\{\oo^b\}$ and $\{\oo_b\}$.
By the above the first of them has type $\mathfrak{ad}$ and the second is trivial $\R^3$.
Thus they do not intersect and so span the whole $\La^2V^*$. The lemma is proved.
 \end{proof}

 \begin{lem}
Let $\g=\mathfrak{ad}+V+\C$ be a Lie algebra extension of the $\h$-module $V+\C$. Then
the brackets on $\m$ are
 $$
[V,V]\subset\mathfrak{ad}+\C,\ \ [V,\C]\subset V,\ \ [\C,\C]\subset\C.
 $$
 \end{lem}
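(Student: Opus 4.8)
The plan is to determine the possible targets of the brackets on $\m$ purely by $\h$-equivariance, using the irreducible decomposition of $\La^2\m$ already computed together with Schur's lemma, exactly as in the strategy of Section \ref{S2}. Recall that as an $\h=\mathfrak{su}(2)$-module we have
 $$
\La^2\m=\La^2V+V\ot\C+\La^2\C=(\mathfrak{ad}+\R^3)+(V+V)+\R^1,
 $$
while the target decomposes as $\g=\h+\m=\mathfrak{ad}+(V+\C)=\mathfrak{ad}+V+\R^2$. The bracket $\mathcal{B}_\m\in\op{Hom}_\h(\La^2\m,\g)$ must respect isotypic components, so I would match each irreducible summand of $\La^2\m$ against the isotypic pieces of $\g$ of the \emph{same} type, and read off which components of $\m$ (or $\h$) can appear in each bracket.

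First I would treat $[V,V]$, coming from $\La^2V=\mathfrak{ad}+\R^3$. The $\mathfrak{ad}$-summand can only map to the $\mathfrak{ad}$-part of $\g$, i.e.\ into $\h=\mathfrak{ad}$, since $\m=V+\C$ contains no copy of $\mathfrak{ad}$ (here $V$ has type $V$ and $\C$ has type $\R^2$, neither isomorphic to $\mathfrak{ad}$). The trivial summand $\R^3$ inside $\La^2 V$ can only map to trivial summands of $\g$; the only trivial summand of $\g$ is the $\R^2\simeq\C$ inside $\m$ (as $\mathfrak{ad}$ and $V$ are nontrivial irreducibles). Hence $[V,V]\subset\mathfrak{ad}+\C$. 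Next, $[V,\C]$ comes from $V\ot\C=V+V$, whose summands have type $V$; the only $V$-isotypic part of $\g$ is $V\subset\m$, so $[V,\C]\subset V$. Finally $[\C,\C]$ comes from $\La^2\C\simeq\R^1$, a trivial module, which as before can only land in the trivial part $\C$ of $\g$, giving $[\C,\C]\subset\C$.

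The only genuinely substantive point — and the one I would be most careful about — is verifying that the three irreducible types $\mathfrak{ad}$, $V$, and $\R$ (trivial) are pairwise non-isomorphic $\h$-modules, so that Schur's lemma forbids all the cross-terms claimed to vanish. For $\h=\mathfrak{su}(2)$ these are the $3$-dimensional adjoint, the $4$-dimensional real module $\H$ underlying the tautological $\C^2$, and the $1$-dimensional trivial module, which are manifestly inequivalent (already by dimension for $\mathfrak{ad}$ versus $V$, and by non-triviality versus $\R$). Since the target $\g$ contains \emph{no} copy of $\mathfrak{ad}$ inside $\m$ and \emph{no} copy of $V$ inside $\h$ or $\C$, every forbidden component is genuinely excluded. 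This is the same mechanism that makes the whole reconstruction tractable, and no computation beyond bookkeeping of isotypic types is required; the containments (rather than equalities) reflect that the actual bracket may or may not hit each allowed summand, to be pinned down later by imposing $\op{Jac}_\m=0$.
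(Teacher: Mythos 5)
Your proof is correct and follows exactly the route the paper intends: the paper states this lemma without an explicit proof, but the preceding decomposition $\La^2V=\mathfrak{ad}+\R^3$ and the Schur's-lemma framework of Section \ref{S2} are set up precisely so that the containments follow from matching isotypic components of $\La^2\m=(\mathfrak{ad}+\R^3)+(V+V)+\R^1$ against $\g=\mathfrak{ad}+V+\R^2$, which is what you do. Your explicit check that $\mathfrak{ad}$, $V$, and the trivial module are pairwise inequivalent is the right point to be careful about and is correctly handled.
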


Here from the naked representation theory viewpoint $\C=\R^2$, but we shall use the structure $i$ on it.
The lemma has the following implications.

 \begin{itemize}
 \item $\C$ is a Lie subalgebra of $\g$ and it is solvable: $\La^2\C\mapsto\R\subset\C$.
There exists a $J$-adapted basis $\ee,i\ee$ of $\C$ such that $[\ee,i\ee]=\e\ee$ for some
$\e\in\{0,1\}$ (for $\e=1$ this determines $\ee$ uniquely).
 \item Since $\C$ is the trivial $\h$-module, the bracket $V\ot\C\to V$ is composed of
endomorphisms, i.e. $[x,\eta]=A_\eta(x)$, where $\eta\mapsto-A_\eta$ is the homomorphism
of Lie algebras $\C\to\op{Lie}(\op{End}_\R(V))=\R\oplus\h\simeq\mathfrak{u}(2)$.
Since the latter has no non-trivial solvable subalgebras, the homomorphism is not injective
when the subalgebra $\C$ is not abelian ($\e=1$ $\Rightarrow$ $A_\ee=0$).
\item The $V$-bracket $\La^2V\to\g$ for some $\l\in\R$, $b,b'\in\Im(\H)$ equals
 $$
[x,y]=\l\cdot\Im(x\bar{y})+\bigl(\oo_b(x,y)\ee+\oo_c(x,y)i\ee\bigr).
 $$
 \end{itemize}
Elaborating upon Lemma \ref{coro1} with these choices $\g$ is a Lie algebra iff the
$V$-Jacobi identity holds -- $\op{Jac}_\m:\La^3V\to\g$ is zero and in addition
 \begin{gather}
\oo_b(A_{i\ee}x,y)+\oo_b(x,A_{i\ee}y)=\e\oo_b(x,y), \notag\\
\oo_b(A_{\ee}x,y)+\oo_b(x,A_{\ee}y)=-\e\oo_c(x,y), \label{Q}\\
\oo_c(A_{\eta}x,y)+\oo_c(x,A_{\eta}y)=0,\quad \eta\in\C \notag.
 \end{gather}

It is also important to notice that since $\H$ is a division algebra, then every nonzero operator
$A_\eta\in\op{End}_\R(V)=\H$ is invertible. Now we consider the following possibilities
(if $\l=0$, then $\m$ is a Lie algebra).

\medskip

(A1.1) $\l=0$, $[V,V]\ne0$ and the subalgebra $A_\C\subset\H$ is nonzero.
We claim that the map $A$ has a kernel. Indeed, for $\e=1$ we have $A_\ee=0$.
For $\e=0$ denoting $A_\eta(x)=xq$, $q\in\H$, then (\ref{Q}) implies
$\Re(x(qd+d\bar{q})\bar{y})=0$, where $d$ is any linear combination of $b,c\in\Im(\H)$.
As $b,c$ are not simultaneously zero, this yields a kernel, which can be accommodated into $\ee$
(using $GL(1,\C)$-freedom of change of basis in the $\C$-module for $\e=0$).

Then the Jacobi identity $\op{Jac}_\m(\La^3V)=0$ and the remark about invertibility imply that
$c=0$. Using the endomorphism freedom in the choice of basis in $V$, we can assume $b=\a\,i$.
Using again (\ref{Q}) we obtain $A_{i\ee}(x)=x(\frac\e2+ri)$ for some $r\in\R$.
Thus in this case $\m$ is a Lie algebra with the structure
equations\footnote{Here and in what follows we adopt the convention that $x,y\in V$ are arbitrary
elements, but $\ee\in\C$ is a fixed element, in particular $\ee,i\ee$ is a real basis of $\C$.}
 $$
\quad [x,y]=\a\Re(xi\bar{y})\ee,\ [x,i\ee]=x(\tfrac\e2+ri),\ [\ee,i\ee]=\e\ee.
 $$
In other words, $\m$ is obtained in two steps.
First we construct the central extension of abelian algebra $V$ by the 2-form $\oo_i$ -
we get the Heisenberg Lie algebra $\mathfrak{heis}(V)=V+\R\ee$. Then we take its
1-dimensional extension of by the derivation\footnote{The central ("left") extension
and extension by derivations ("right") $\tilde{\g}$ of the Lie algebra $\g$ (via $\mathfrak{f}$) are given
respectively by the exact sequences
 $$
0\to\mathfrak{f}\to\tilde{\g}\to\g\to0, \qquad
0\to\g\to\tilde{\g}\to\mathfrak{f}\to0.
 $$
Then $\g$ is respectively the quotient/subalgebra of $\tilde{\g}$ and its bracket can/cannot
change upon the extension.}
$\op{ad}_{i\ee}$. In Table A1 this case is called A1.1.

\medskip

(A1.2) $\l=0$, $[V,V]\ne0$ and the subalgebra $A_\C\subset\H$ is zero.
In this case the subalgebra $\C$ must be abelian, and then the Jacobi identity
holds $\op{Jac}_\m(\La^3V)=0$ identically.
We can normalize (by rescaling $\ee$) $b=i$ and $c=p\in\Im(\H)$ is arbitrary.

Thus $\m$ is the Lie algebra, which is a 2-dimensional central extension of abelian $V$ by two
2-forms $\oo_i,\oo_p$. This is the case A1.2 of Table A1.

\medskip

(A1.3) $\l=0$, $[V,V]=0$. Here the Jacobi identity is satisfied and we only need to normalize
the map $\eta\mapsto A_\eta$. If $\e\ne0$, then $A_\ee=0$ and we choose a basis in $V$ such that
$A_{i\ee}(x)=x(\b+ri)$, $\b,r\in\R$.

On the other hand, if $\e=0$, then $A_\C\subset\H$ is either 1-dimensional or 2-dimensional subalgebra,
which is possible only for $A_\ee(x)=x\a$, $A_{i\ee}(x)=x(\b+ri)$, $\a,\b,r\in\R$. Clearly we can normalize $\a=0$ or $1$.

Thus $\m$ is a Lie algebra, which is either an extension of abelian $V$ by two commuting derivations
$\op{ad}_\ee,\op{ad}_{i\ee}$ or an extension of abelian $V+\R\ee$ by one derivation $\op{ad}_{i\ee}$.
This is the case A1.3 of Table A1.

\medskip

(A1.4) $\l\neq0$. Here $\m$ is not a Lie subalgebra of $\g$.
In this case the Jacobi identity implies that $\C$ has a central element
(in particular $\C$ is abelian). We can choose it to be $i\ee$. Then $A_{i\ee}=0$, $c=0$.
We can normalize (by endomorphisms) $\l=\pm1$, $b=i$. The Jacobi identity further yields
$A_\ee(x)=3\l xi$.

The obtained Lie algebra $\g$ is isomorphic to $\mathfrak{u}_3$ for $\l=1$ and
to $\mathfrak{u}_{1,2}$ for $\l=-1$. The associated almost complex manifolds are
$U(3)/SU(2)$ and $U(2,1)/SU(2)$ respectively.
In Table A1 this case is called A1.4.

\medskip

Thus we obtained the complete classification of the homogeneous structures $M=G/H$
in the first case from the list of Theorem \ref{Thm3}. Almost complex structures on
$M$ are obtained from $\h$-invariant complex structures on $\m$ specified above.

The Nijenhuis tensor can be computed by the formula
 $$
N_J(X,Y)=\pi([JX,JY]-J[X,JY]-J[JX,Y]-[X,Y]),\quad X,Y\in\m,
 $$
where $[,]:\La^2\m\to\g$ are the brackets in the Lie algebra $\g$ and $\pi:\g\to\m$ is the projection
along $\h$. Similarly the differential of the almost symplectic structure $\oo$ on $\m$
is computed by the Cartan formula using only the brackets on $\m$. This explains all entries in
Table A1.

\medskip

Consider the other representation $\m=\mathfrak{ad}^\C$ of $\h=\mathfrak{su}(2)$.
Here it is more convenient to involve Levi decomposition: $\g=\g_{ss}\oplus\mathfrak{r}$,
where the first summand is a semi-simple part and the second is the radical.
The factor $\g_{ss}$ can be chosen to contain $\h$, and so can be one of the following:
 \begin{itemize}
\item $\g_{ss}=\mathfrak{su}(2)^3$, $\h=\mathfrak{su}(2)_\text{diag}$,
\item $\g_{ss}=\mathfrak{su}(2)\oplus\mathfrak{sl}(2,\C)$, $\h=\mathfrak{su}(2)_\text{diag}$,
\item $\g_{ss}=\mathfrak{su}(2)^2$, $\h=\mathfrak{su}(2)_\text{diag}$,
\item $\g_{ss}=\mathfrak{sl}(2,\C)$, $\h\subset\mathfrak{sl}(2,\C)$.
 \end{itemize}
The last case is disqualified as $\h$ acts by zero on $\g/\g_{ss}$
(no 3-dimensional nontrivial representation for $\g_{ss}$). The other give the cases A2.1, A2.2, A2.3
of Table A2 respectively.

Finally, it is possible that $\g_{ss}=\h$, whence $\m=\mathfrak{r}$. Since this
is solvable of module type $\mathfrak{ad}^\C$, the only non-trivial brackets
in terms of the splitting $\m=\m_1+\m_2$ (grading) are the following (case A2.4)
 $$
[x_1,y_1]=[x,y]_2,\quad x,y\in\mathfrak{ad}\simeq\h.
 $$

The corresponding analysis for $\h=\mathfrak{su}(1,1)$ is similar (for instance, Lemma \ref{Le2}
holds true with $\H$ changed to $\H_s$), but a special
care should be taken as in this non-compact case there are null elements on the representation
$V$ with respect to its unique (up to scale) $\h$-invariant metric (that's why Table A3 is bigger than A1).
The cases $\h=\mathfrak{sl}_2(\C),\mathfrak{sl}_3(\R),\mathfrak{sl}_3(\C)$ are much simpler.
The details of computations can be found in \cite{Wi}.
For the largest algebras $\mathfrak{su}(3)$ and $\mathfrak{su}(2,1)$
arising in the symmetry analysis the computations are done in \cite{K$_2$}.

This finishes the proof of Theorem \ref{Thm3}.

 \section{Automorphism groups of nondegenerate structures $J$}\label{S3}

Here we prove Theorem \ref{Thm2} -- find the symmetry algebra of the structures $J$
with non-degenerate Nijenhuis tensor $N_J$. It follows from the tables that $N_J$ can be non-degenerate only
for isotropy algebras $\mathfrak{su}(2)$, $\mathfrak{su}(1,1)$ or $\mathfrak{su}(3)$, $\mathfrak{su}(2,1)$.
In the latter two cases $J$ is locally isomorphic to the $G_2$-invariant almost complex structure on
either $S^6$ or $S^{2,4}$ and if the automorphism group has dimension 14, it is one of the two
forms of the group $G_2$, see \cite{K$_2$} for details.
In what follows we consider the former two cases.

According to \cite{K$_2$} the isotropy algebra $\mathfrak{sym}(J)_o$ of the symmetry algebra
$\mathfrak{sym}(J)$ at the point $o\in M$ is 1-jet determined.
Indeed, the proof of Theorem 1 loc.cit. implies
 \begin{theorem}\label{BK}
If the Nijenhuis tensor $N_J$ on a connected almost complex manifold $(M^6,J)$ is non-degenerate,
then any vector field $X\in\mathfrak{sym}(J)$, is uniquely determined by its 1-jet $[X]_o^1$.
Consequently, the isotropy algebra satisfies:
 $$
\mathfrak{sym}(J)_o=\{X:L_X(J)=0,X(o)=0\}\subset\mathfrak{gl}(\m,J).
 $$
 \end{theorem}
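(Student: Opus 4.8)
The plan is to show that the Lie equation $L_X J=0$ is of finite type, with its solutions determined by their $1$-jet, the non-degeneracy of $N_J$ being exactly what forces the relevant prolongation to vanish. The first input is that the Nijenhuis tensor is a natural invariant of $J$, so every $X\in\mathfrak{sym}(J)$ automatically satisfies $L_X N_J=0$ as well; I would use both equations simultaneously. It suffices to prove the infinitesimal rigidity statement: if $[X]^1_o=0$ (that is $X(o)=0$ and $\n X(o)=0$) then $X\equiv0$ on the connected $M$, since this is equivalent to injectivity of the $1$-jet map on $\mathfrak{sym}(J)$.

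First I would compute the symbol of $L_X J=0$. Writing $A=\n X$, the principal part of $L_X J$ is $A\mapsto[J,A]$, so the kernel of the symbol is the space of $J$-linear endomorphisms $\mathfrak{gl}(\m,J)$. Differentiating $L_X J=0$ once and evaluating at a point where $[X]^1_o=0$, only the second derivatives $T^i_{jk}=\p_j\p_k X^i(o)$ survive, and one obtains $J^i_k T^k_{lj}=J^k_j T^i_{lk}$. Together with the symmetry $T^i_{jk}=T^i_{kj}$ this says precisely that $T$ lies in the first prolongation $\mathfrak{gl}(\m,J)^{(1)}$. Over $\m\simeq\C^3$ this prolongation is $(\C^3)^*\neq0$ (the complex projective prolongation), so $J$ alone does not rigidify the $2$-jet; this is where $N_J$ must enter.

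Next I would differentiate the identity $L_X N_J=0$ once and evaluate at the same point. As $X(o)=0$ and $\n X(o)=0$, every term containing $X$ or $\n X$ drops out and one is left with a linear relation on the same tensor $T$, schematically $N_J\cdot T=0$ with $N_J$ contracted against $T$ in its three slots. The heart of the matter is then a purely algebraic claim: when $N_J$ is non-degenerate (a $\C$-antilinear isomorphism $\La^2_\C\m\to\m$), the only $T\in\mathfrak{gl}(\m,J)^{(1)}$ solving this relation is $T=0$. Equivalently, the first prolongation of the stabilizer $\mathfrak{g}_1=\{A\in\mathfrak{gl}(\m,J):A\cdot N_J=0\}$ vanishes, $\mathfrak{g}_1^{(1)}=0$. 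I expect this representation-theoretic step to be the main obstacle: one must check that the contraction map $\mathfrak{gl}(\m,J)^{(1)}\to\La^2_\C\m^*\ot\m$ built from $N_J$ is injective, and it is exactly here that non-degeneracy is indispensable (a degenerate $N_J$ would leave part of $(\C^3)^*$ in the kernel; the $G_2$ models, where this stabilizer is $\mathfrak{su}(3)$ with $\mathfrak{su}(3)^{(1)}=0$, show that the mechanism is sharp).

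Granting $\n^2X(o)=0$, I would bootstrap to all higher jets. The clean way is to repackage $\mathfrak{g}_1^{(1)}=0$ as the statement that $(J,N_J)$ canonically determines an affine connection $\n$ on $M$: the $J$-connections with a fixed torsion form an affine space modelled on $\mathfrak{gl}(\m,J)^{(1)}$, and the non-degenerate $N_J$ singles out one of them by an $N_J^{-1}$-normalization of the torsion. Every automorphism of $J$ preserves $N_J$, hence this canonical $\n$, so it is an affine transformation; an affine vector field vanishing at $o$ with $\n X(o)=0$ is identically zero on connected $M$ by the standard Kobayashi argument, which is exactly determinacy by the $1$-jet. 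The final "consequently" clause is then immediate: the map $X\mapsto[X]^1_o$ is injective on $\mathfrak{sym}(J)$, and for $X$ in the isotropy we have $X(o)=0$, so its $1$-jet reduces to the linear part $\n X(o)$, which satisfies $[J,\n X(o)]=0$ by $L_X J=0$; thus $\mathfrak{sym}(J)_o$ injects into, and is identified with, the prescribed subspace of $\mathfrak{gl}(\m,J)$.
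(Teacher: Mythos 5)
The paper offers no self-contained proof of this statement: it is quoted as a corollary of the proof of Theorem~1 in \cite{K$_2$}, where the mechanism is exactly the one you describe --- a non-degenerate $N_J$ determines a canonical $J$-linear connection, every symmetry of $J$ preserves it, and affine vector fields on a connected manifold are determined by their 1-jets. Your reconstruction of this architecture is sound: the symbol of $L_XJ=0$ is $A\mapsto[J,A]$ with kernel $\mathfrak{gl}(\m,J)$; the obstruction to 1-jet rigidity sits in the first prolongation; and you correctly recognize that killing $\nabla^2X(o)$ pointwise is not by itself enough --- one must pass through an invariant connection (or an equivalent recursive/Frobenius argument) to propagate the vanishing over all of $M$. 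The final reduction of the isotropy algebra to a subalgebra of $\mathfrak{gl}(\m,J)$ is then immediate, as you say.

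There is, however, a genuine gap at the one point where all the content of the theorem resides: the algebraic claim that for a non-degenerate ($\C$-antilinear isomorphism) $N:\La^2_\C\m\to\m$ the stabilizer $\mathfrak{g}_1=\{A\in\mathfrak{gl}(\m,J):A\cdot N=0\}$ satisfies $\mathfrak{g}_1^{(1)}=0$ --- equivalently, that the $N^{-1}$-normalization of the torsion pins down a \emph{unique} $J$-connection. You explicitly defer this (``I expect this representation-theoretic step to be the main obstacle''), but without it nothing is proved: for $N=0$ the conclusion is false (an integrable $J$ has infinite-dimensional local symmetry), so non-degeneracy must enter through an actual computation --- injectivity of the contraction $\mathfrak{gl}(\m,J)^{(1)}\to\m^*\ot\op{Hom}(\La^2_\C\m,\m)$, $T\mapsto(v\mapsto T(v)\cdot N)$ --- and not merely be invoked; checking the $G_2$ models, where the stabilizer happens to be $\mathfrak{su}(3)\subset\mathfrak{so}(6)$ with vanishing prolongation for elementary reasons, does not cover a general non-degenerate $N$. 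A secondary slip: $\mathfrak{gl}(3,\C)^{(1)}$ is $S^2_\C(\C^3)^*\ot_\C\C^3$ (complex dimension $18$), not $(\C^3)^*$; the latter is the prolongation of conformal-type algebras. This does not affect your logic, since only its non-vanishing is used, but it should be corrected.
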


 \subsection{Symmetries via derivations}

Theorem \ref{BK} hints to the following statement concerning the symmetry algebra of the homogeneous models
A1.1, A2, A3.1, A3.2 and A4 according to the numeration in Appendix.

 \begin{prop}\label{Pro1}
Let $M=G/H$ be the homogeneous almost complex manifold
associated with one of the Lie algebra extensions of the $\h$-module $\m$
in the case when $\m$ is a Lie algebra (subalgebra in $\g$) and
$\h$ is either $\mathfrak{su}(2)$ or $\mathfrak{su}(1,1)$.
If the Nijenhuis tensor $N_J$ of the almost complex structure $J$ is
non-degenerate, then the full symmetry algebra as a vector space is
 $$
\mathfrak{sym}(J)=\m+\mathfrak{sym}(J)_o
 $$
and the full isotropy algebra equals
 $$
\mathfrak{sym}(J)_o=\mathfrak{der}(\m)\cap\mathfrak{gl}(\m,J)=\{A\in\mathfrak{der}(\m):AJ=JA\}.
 $$
 \end{prop}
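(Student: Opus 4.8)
The plan is to realize $M$ as the simply connected Lie group with Lie algebra $\m$, carrying $J$ as a \emph{left-invariant} almost complex structure, and to take the base point $o$ to be the identity. The symmetries coming from $G=H\ltimes M$ are of two kinds: the left translations, whose fundamental vector fields $\xi^*$ ($\xi\in\m$) act transitively and satisfy $\xi^*(o)=\xi$, and the isotropy $\h$, acting by differentials of Lie group automorphisms of $M$ preserving $J$. First I would establish the vector-space splitting: the evaluation $\op{ev}_o\colon\mathfrak{sym}(J)\to T_oM=\m$, $X\mapsto X(o)$, has kernel $\mathfrak{sym}(J)_o$ and is surjective, since already the translations hit all of $\m$; writing $\mathfrak{t}=\{\xi^*:\xi\in\m\}$ this gives $\mathfrak{sym}(J)=\mathfrak{t}\oplus\mathfrak{sym}(J)_o\cong\m+\mathfrak{sym}(J)_o$, and $\mathfrak{t}\cong\m$ as a subalgebra. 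By Theorem \ref{BK} the linearization $\ell\colon\mathfrak{sym}(J)_o\to\mathfrak{gl}(\m,J)$, $X\mapsto(dX)_o$, is injective with image in $\mathfrak{gl}(\m,J)$, so it remains to identify this image with $\mathfrak{der}(\m)\cap\mathfrak{gl}(\m,J)$.

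For the inclusion $\mathfrak{der}(\m)\cap\mathfrak{gl}(\m,J)\subseteq\ell(\mathfrak{sym}(J)_o)$ I would integrate. A derivation $A\in\mathfrak{der}(\m)$ exponentiates to a one-parameter group of Lie algebra automorphisms $\exp(tA)$, which, as $M$ is simply connected, integrates to group automorphisms $\Phi_t$ of $M$ with $(d\Phi_t)_o=\exp(tA)$. Since $\Phi_t$ intertwines left translations, $\Phi_t^*J$ is again left-invariant, and $(\Phi_t^*J)_o=(d\Phi_t)_o^{-1}J_o(d\Phi_t)_o=J_o$ precisely because $\exp(tA)$ commutes with $J_o$ (this is where $A\in\mathfrak{gl}(\m,J)$ enters); hence $\Phi_t^*J=J$, and the generator of $\Phi_t$ is an isotropy symmetry with linearization $A$.

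The reverse inclusion is the crux. Given $X\in\mathfrak{sym}(J)_o$ with $A=\ell(X)$, I must show $A\in\mathfrak{der}(\m)$. Using the splitting I would expand $[X,\xi^*]=-(A\xi)^*+Y_\xi$ with $Y_\xi\in\mathfrak{sym}(J)_o$, the translation component being forced by evaluating at $o$; thus $A$ is a derivation iff the correction $Y_\xi$ vanishes. Feeding this into the Jacobi identity for $X,\xi^*,\eta^*$ and using that $\xi\mapsto\xi^*$ is an anti-homomorphism, $[\xi^*,\eta^*]=-[\xi,\eta]^*$, yields a relation expressing the derivation defect $A[\xi,\eta]-[A\xi,\eta]-[\xi,A\eta]$ through the antisymmetrization of $\xi\mapsto\ell(Y_\xi)$. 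To close the argument I would use that $X$ preserves $J$ and hence its canonically associated tensor $N_J$: evaluating $L_XN_J=0$ at $o$ shows that the derivation defect of $A$ has vanishing Nijenhuis combination, and then the non-degeneracy of $N_J$, combined with the injectivity of $\ell$ from Theorem \ref{BK}, is what forces $Y_\xi=0$ and hence $A\in\mathfrak{der}(\m)$.

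The main obstacle is exactly this last step: there is no soft reason for the translation subalgebra $\mathfrak{t}$ to be $\op{ad}_X$-invariant (equivalently, for $Y_\xi$ to vanish), since the left translations need not form a normal subgroup of $\op{Aut}(M,J)$. It is precisely the non-degeneracy of $N_J$ -- entering both through the $1$-jet determinacy of Theorem \ref{BK} and through the requirement that the invariant tensor $N_J$ be preserved -- that rigidifies the isotropy and rules out symmetries whose linearization is not a derivation. Once $\mathfrak{sym}(J)_o=\mathfrak{der}(\m)\cap\mathfrak{gl}(\m,J)$ is in hand, the splitting reduces the remaining proof of Theorem \ref{Thm2} to the purely algebraic computation of the $J$-commuting derivations of $\m$, which one expects to be exactly $\h$, giving $\dim\mathfrak{sym}(J)=6+3=9$.
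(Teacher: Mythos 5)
Your setup (the splitting via evaluation at $o$, the injectivity of the linearization $\ell$ from Theorem \ref{BK}, and the integration of a $J$-commuting derivation to an isotropy symmetry) is fine, and you correctly isolate the crux: showing that $\ell(X)$ is a derivation of $\m$, equivalently that the corrections $Y_\xi$ in $[X,\xi^*]=-(A\xi)^*+Y_\xi$ do not contribute. But at exactly that point your argument stops being a proof. The mechanism you propose --- evaluate $L_XN_J=0$ at $o$ and invoke non-degeneracy of $N_J$ --- only yields that $A$ is a derivation of the \emph{tensor} $N_J|_o$, i.e. $A\,N_J(\xi,\eta)=N_J(A\xi,\eta)+N_J(\xi,A\eta)$. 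This is in general strictly weaker than being a derivation of the Lie bracket of $\m$: in case A1.1, for instance, $N_J(x,y)$ lands only in the $\C$-summand and $N_J(x,\ee)$ only in $V$, so $N_J$ does not see the whole bracket and no amount of non-degeneracy of $N_J$ as a map $\La^2_\C\m\to\m$ turns ``derivation of $N_J$'' into ``derivation of $[\cdot,\cdot]_\m$''. Moreover this first-order identity constrains only $A$, not the antisymmetrized defect $\ell(Y_\eta)\xi-\ell(Y_\xi)\eta$ that you need to kill; you would have to go to higher jets or bring in more structure, and you do neither. You candidly call this ``the main obstacle,'' but an acknowledged obstacle is still a gap.

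The paper closes this gap by entirely different means. First, the classification of canonical forms of non-degenerate Nijenhuis tensors in \cite{K$_1$,K$_2$} bounds the full isotropy: away from the two $G_2$ models, $\dim\tilde\h\le5$, so the extra piece $\mathfrak{r}$ has $\dim\mathfrak{r}\le2$ and can be taken to be the radical of $\tilde\h$. Second, $\mathfrak{r}$ is then a small $\h$-module (trivial, or the standard $\mathfrak{sl}_2(\R)$-module in the $\mathfrak{su}(1,1)$ case), and Schur's lemma/$\h$-equivariance of the brackets, combined with the Levi decomposition of $\tilde\g$, the Jacobi identity, $J$-invariance of $\pi\circ\mathfrak{ad}_{\mathfrak{r}}|_\m$, and effectivity, is applied case by case over the module types of $\m$ ($\mathfrak{ad}^\C$ vs.\ $V+\C$, compact vs.\ split) to force $[\mathfrak{r},\m]\subset\m$ --- the hardest subcase ($\mathfrak{r}\simeq U$ for $\mathfrak{su}(1,1)$, $\m=V+\C$) requiring an explicit computation with the equivariant maps $\Psi,\Phi$ to exclude an $\h$-component in $[\mathfrak{r},V]$. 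None of this representation-theoretic input appears in your proposal, and without it (or a genuinely worked-out substitute) the reverse inclusion $\mathfrak{sym}(J)_o\subseteq\mathfrak{der}(\m)$ remains unproved.
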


Otherwise said, we are given the pair $(\g,\h)$ with $\m=\g/\h$ and $\h$-invariant $J$ on $\m$.
The claim is that if we can find an extension $(\tilde\g,\tilde\h)\supset(\g,\h)$ with the
same property, then still $\tilde\h$ acts on $\m$ by derivations.

Below we denote by $\pi:\tilde\g\to\m$ the projection along $\tilde\h$ and use the labels
for almost complex homogeneous spaces from the Appendix.

 \begin{proof}
Let $\tilde\g=\mathfrak{sym}(J)$ be the full symmetry algebra.
By \cite{K$_1$,K$_2$} the full isotropy algebra $\tilde\h=\mathfrak{sym}(J)_o\subset\tilde{\g}$
is at most 8-dimensional.
If $J$ is not locally isomorphic to the $G_2^c$-invariant almost complex structure on $\mathbb{S}^6$
or $G_2^*$-invariant almost complex structure on $S^{2,4}$, then $\dim\tilde\h\le5$.
Indeed, this is so in any non-exceptional case of canonical forms NDG(1-4) of
\cite{K$_1$}, exceptions are the two cases given by formulae (16) and (17) of \cite[Section 7]{K$_2$} when
$\tilde\h$ is equal to $\mathfrak{su}(2,1)$ or $\mathfrak{su}(3)$ respectively. The structure of $\g$
recovers uniquely and $M=G_2^*/SU(2,1)$ in the first case and $M=G_2^c/SU(3)$ in the second case respectively;
the structure $J$ in every case is unique. Any proper subalgebra of $\mathfrak{su}(2,1)$ is at most 5-dimensional,
and that of $\mathfrak{su}(3)$ is at most 4-dimensional, whence the claim.

Since already $J$ has 3-dimensional isotropy $\h$ by construction, the additional subspace
$\mathfrak{r}\subset\tilde{\h}$ has $\dim\mathfrak{r}\le2$.
Thus this $\mathfrak{r}$ can be chosen the radical of the full isotropy algebra $\tilde\h\subset\tilde\g$.

We start with the $\mathfrak{ad}^\C$ representation of $\h=\mathfrak{su}(2)$ on $\m$.
Then by $\h$-equivariance of the brackets, $\mathfrak{r}$ is in the radical
$\tilde{\mathfrak{r}}$ of $\tilde\g$
(this follows by dimensional reasons and Levi decomposition of $\mathfrak{r}\oplus\m'$, where
$\m'$ is the part of $\m$ that does not enter into the semi-simple part $\g_{ss}\subset\g$).

If $\m$ is either $\mathfrak{su}(2)\oplus\mathfrak{su}(2)$ or $\mathfrak{sl}(2,\C)$ (A2.1,A2.2),
then $\g$ is semi-simple, so extension of $\h$ to $\tilde\h$ is by radical $\mathfrak{r}$ only, whence
$[\mathfrak{r},\m]\subset\mathfrak{r}$ and hence the added summand $\mathfrak{r}$ acts non-effectively,
which is prohibited.

In the case $\m=\mathfrak{su}(2)+\R^3$ (A2.3) the radical of $\tilde\g$ is
$\tilde{\mathfrak{r}}=\mathfrak{r}+\R^3$
and so we have: $[\mathfrak{r},\mathfrak{su}(2)]=0$
 % first $\subset\mathfrak{r}$ and then =0 by $\h$-equivariance
and $[\mathfrak{r},\R^3]\subset\mathfrak{r}+\R^3$. By $J$-invariancy of $\pi\circ\mathfrak{ad}_\mathfrak{r}(\m)$ we get $\pi([\mathfrak{r},\m])=0$.
This means that the action of $\mathfrak{r}$ on $\m$ is non-effective and we exclude such
$\mathfrak{r}$ as before.

In the last $\mathfrak{ad}^\C$-case $\m=\mathfrak{a}_1\oplus\mathfrak{a}_2$ (A2.4)
the Lie algebra structure is graded and the $\h$-representation
on $\mathfrak{a}_i\simeq\R^3$ is adjoint. In the radical $\tilde{\mathfrak{r}}=\m+\mathfrak{r}$
the $\h$-representation $\mathfrak{r}$ is the trivial submodule, whence by $\h$-equivariance
$[\mathfrak{r},\m]\subset\m$. This means that $\mathfrak{r}$ acts by derivations, as required
in Proposition \ref{Pro1}.

The case $\h=\mathfrak{su}(1,1)$ is similar except for the last type $\m=\mathfrak{a}_1\oplus\mathfrak{a}_2$. Then another possibility occurs that $\mathfrak{r}$ is
the standard representation $\mathcal{R}(\l_1)$ of $\h\simeq\mathfrak{sl}_2(\R)$,
where $\mathcal{R}(w)$ is the representation
of highest weight $w$. As $\mathfrak{a}_i=\mathcal{R}(2\l_1)$ and
$\mathcal{R}(\l_1)\otimes \mathcal{R}(2\l_1)=\mathcal{R}(\l_1)+\mathcal{R}(3\l_1)$,
$\h$-equivariance implies $[\mathfrak{r},\m]\subset\mathfrak{r}$, so the action is non-effective.

Consider now the second possible representation of $\h$: $\m=V+\C$. Starting with $\h=\mathfrak{su}(2)$
we note the additional subspace $\mathfrak{r}$ is the trivial $\h$-module.
Again $\h$-equivariance (together with Schur's lemma) implies $[\mathfrak{r},V]\subset V$.
Since $0\neq[V,V]\subset\C$ and $\pi\circ\mathfrak{ad}_\mathfrak{r}|_{\m}$ commutes with $J$,
the Jacobi identity implies $[\mathfrak{r},\C]\subset\langle[V,V],J[V,V]\rangle=\C$. Thus $[\mathfrak{r},\m]\subset\m$
and $\mathfrak{r}$ acts by derivations, as claimed in Proposition \ref{Pro1}.

When $\h=\mathfrak{su}(1,1)$ and $\mathfrak{r}$ as $\h$-representation is trivial,
the argument is the same (here $\dim\mathfrak{r}\le2$).
Consider the case $\mathfrak{r}\simeq\R^2$ being the standard representation $U$ of
$\h\simeq\mathfrak{sl}_2(\R)$. In this case $V=U^\C$ is $2\mathcal{R}(\l_1)$ as $\h$-representation.
Therefore as $\mathcal{R}(\l_1)\otimes \mathcal{R}(\l_1)=\mathcal{R}(0)+\mathcal{R}(2\l_1)$
(where the first summand is the trivial 1-dimensional representation and the second is the adjoint),
we get by $\h$-equivariance
$[\mathfrak{r},V]\subset\h+\C$. Since $0\neq[V,V]\subset\C$, $[V,\C]\subset V$, $[\h,V]=V$
and $\pi\circ\mathfrak{ad}_\mathfrak{r}|_{\m}$ commutes with $J$, the Jacobi identity
yields $[\mathfrak{r},\C]\subset[\h+\C,V]=V$.

Let us show that $[\mathfrak{r},V]$ does not have $\h$ component. Indeed, since
$[\mathfrak{r},\mathfrak{r}]\subset\h+\C$ by $\h$-equivariance, the Jacobi identity implies
 $$
V\supset[[\mathfrak{r},\mathfrak{r}],V]\subset[\mathfrak{r},[\mathfrak{r},V]]\subset
[\mathfrak{r},\h+\C]\subset\mathfrak{r}+V.
 $$
Since $[\mathfrak{r},\h]=\mathfrak{r}$, the presence of $\h$ component implies non-trivial
$\mathfrak{r}$ component in the last summand of the above display formula.
To study it consider the bracket-maps $\psi:\mathfrak{r}\times\h\to\mathfrak{r}$ and
$\phi:\mathfrak{r}\times V\to\h$ (for the latter we post-compose with the projection).
The Jacobi identity and the above display formula imply $\psi(r_1,\phi(r_2,v))=\psi(r_2,\phi(r_1,v))$
for all $r_1,r_2\in\mathfrak{r}$, $v\in V$. Since the maps $\psi,\phi$ depend only on the $\h$-module
structure and as such $\mathfrak{r}=U$, $V=U\oplus U$, we change the maps to
 $$
\Psi:U\times\h\to U,\ \Phi:U\times U\to\h \text{ with }
\Psi(u_1,\Phi(u_2,u_3))=\Psi(u_2,\Phi(u_1,u_3)).
 $$
Here $\Psi$ is the standard representation, and $\Phi$ is proportional to the symmetric
multiplication $(u_1,u_2)\mapsto\l\,u_1u_2$, $\l\in\R$, because $S^2U=\h$.
This isomorphism is given by a choice of $\h$-invariant area form $\oo$
on $U$. Let $p,q\in U$ be the canonical basis, $\oo(p,q)=1$. Then
 $$
\Psi(p,\Phi(q,p))=\l\Psi(p,qp)=\l p\ne\Psi(q,\Phi(p,p))=\l\Psi(q,p^2)=-2\l p
 $$
unless $\l=0$. Thus the $\h$ component vanishes and $[\mathfrak{r},V]\subset\C$.

Therefore $[\mathfrak{r},\m]\subset\m$ and
$\mathfrak{r}$ acts by derivations.

This finishes the proof of Proposition \ref{Pro1}.
 \end{proof}

 \subsection{Proof of Theorem \ref{Thm2}}

To find the derivations we can use the exact sequence
 \begin{equation}\label{loseq}
0\to Z(\m)\longrightarrow\m\stackrel{\op{ad}}\longrightarrow\mathfrak{der}(\m)\longrightarrow H^1(\m,\m)\to0,
 \end{equation}
where $Z(\m)$ is the center of the Lie algebra $\m$.

Consider at first the case $\h=\mathfrak{su}(2)$, representation $\mathfrak{ad}^\C$.
In all four cases here $\m\subset\g$ is a Lie subalgebra.

In the first two cases A2.1, A2.2 it is semi-simple:
$\mathfrak{su}(2)\oplus\mathfrak{su}(2)$ or $\mathfrak{sl}_2(\C)$.
By Whitehead lemma $H^1(\m,\m)=0$, so all derivations are inner. Thus
$\mathfrak{der}(\m)=\{\op{ad}_X:X\in\m\}\simeq\m$ from (\ref{loseq}).

We claim that if $\op{ad}_X$ commutes with $J$, then $\op{ad}_{JX}$ does not. Elsewise
 $$
N_J(X,Y)=[\op{ad}_{JX},J](Y)-J[\op{ad}_X,J](Y)=0,
 $$
and the Nijenhuis tensor is degenerate (even DG$_2$ in terminology of \cite{K$_1$}).
Therefore $\mathfrak{sym}(J)_o$ is totally real in $\m$ and so cannot have dimension $>3$.
But dimension 3 is guaranteed since $\mathfrak{sym}(J)_o\supset\h$. Consequently
$\mathfrak{sym}(J)_o=\h$ and $\mathfrak{sym}(J)=\g$.

Next case A2.3 is $\m=\h\ltimes\R^3$. Clearly
$\mathfrak{sym}(J)_o\subset\mathfrak{der}(\m)$ must preserve the radical $\R^3$.
The operator $J_r=J-r\1$ is invariant, where $\1$ is the identity operator -- see formula (\ref{J}).
Therefore the full isotropy also preserves the semi-simple part $J_r\R^3=\h$,
and the action on $\h$ induces the action on $\R^3$.
Again any derivation on $\h$ is internal by the Whitehead lemma and so
$\dim\mathfrak{sym}(J)_o\le3$ implying $\mathfrak{sym}(J)=\g$.

The last case A2.4 is the graded nilpotent Lie algebra $\m=\mathfrak{a}_1\oplus\mathfrak{a}_2$
with $\mathfrak{a}_i\simeq\R^3$, $J_r:\mathfrak{a}_1\to\mathfrak{a}_2$ and the bracket
being given by $[\xi,\eta]=J_r(\xi\times\eta)$ for $\xi,\eta\in\mathfrak{a}_1$,
the cross product $\times$ being the Lie bracket on
$\R^3=\mathfrak{su}(2)$. This relation shows that $\mathfrak{a}_2=[\m,\m]$ equipped with $\times$
product (so the bracket is
$\La^2\mathfrak{a}_2\ni\xi\we\eta\mapsto[J_r^{-1}\xi,J_r^{-1}\eta]\in\mathfrak{a}_2$)
must be preserved by the derivations. Since this algebra $(\mathfrak{a}_2,\times)$ is isomorphic
to $\mathfrak{su}(2)$, we obtain $\dim\mathfrak{sym}(J)_o\le3$ and $\mathfrak{sym}(J)=\g$ as before.

Finally there is a family of representations of $\h$ of the type $V\oplus\C$.
Only one of the cases A1.1, with $\m$ being a Lie algebra, has non-degenerate $N_J$.
In this case the space of derivations of $\m$ commuting with $J$ is obtained by
the straightforward computation with the case split according to
parameters; these tedious computations are done in Maple. The result is the same as above.

The case $\h=\mathfrak{su}(1,1)$ is very similar to the considered $\mathfrak{su}(2)$.
The only difference is that in $V+\C$ representation there is one more case.

\smallskip

Now to complete the proof of Theorem \ref{Thm2}, we have to consider the homogeneous structures
of type II in Theorem \ref{Thm1} (when $\m$ is not a Lie algebra: A1.4, A3.5).
For such $M=G/H$ the Lie group $G$ is reductive, $\g=\g_{ss}+\mathfrak{z}$
with 1-dimensional center $\mathfrak{z}$ and 8-dimensional
semi-simple part $\g_{ss}$, and the Lie algebra $\tilde\g=\mathfrak{sym}(J)$ contains $\g$.

From the proof of Proposition \ref{Pro1} we know that
$\tilde\h=\h+\mathfrak{r}$, where the semi-direct summand
$\mathfrak{r}$ is the radical in $\tilde\h$ and
$\dim\mathfrak{r}\le2$.
This implies (by inspection of the Levi decomposition of $\tilde\g$) that
$\mathfrak{z}+\mathfrak{r}\subset\tilde\g$ is a subalgebra, which is either
semi-simple or the radical of $\tilde\g$. In any case, because
$\m=\m_0+\mathfrak{z}$ for $\m_0=\m\cap\g_{ss}\subset\g_{ss}$, we get
$[\mathfrak{r},\m]\subset[\mathfrak{r},\m_0]+[\mathfrak{r},\mathfrak{z}]\subset\mathfrak{r}+\mathfrak{z}$.
Consequently $\pi\circ\mathfrak{ad}_\mathfrak{r}(\m)\subset\mathfrak{z}$ for the projection $\pi:\tilde\g\to\m$ along $\tilde\h$. Since $\pi\circ\mathfrak{ad}_\mathfrak{r}\subset\mathfrak{gl}(\m,J)$ yields $J$-invariance of $\pi\circ\mathfrak{ad}_\mathfrak{r}(\m)$, we conclude that by dimensional reasons $\pi([\mathfrak{r},\m])=0$.
Consequently the action of $\mathfrak{r}$ on $\m$ is not effective, so $\mathfrak{r}=0$.

This finishes the proof of Theorem \ref{Thm2}.

 \section{Almost Hermitian structures and their integrability}\label{S4}

The existence of a homogeneous almost Hermitian, almost symplectic or almost complex structure
depends only on the isotropy representation, in contrast with the various integrability conditions
(K\"ahler, etc.) for such structures which generally depend on the Lie algebra structure.

Pseudo-Riemannian metrics on the almost complex homogeneous manifold $M=G/H$
with the isotropy $\h$-module $\m$ and $\h$-invariant complex structure $J$ on $\m$ correspond
to non-degenerate $\h$-invariant quadratic forms $g\in S^2\m^*$. Invariant almost Hermitian structures are
elements of the set
 $$
S^2_J\m^*=\{g\in(S^2\m^*)^\h:g(J\xi,J\eta)=g(\xi,\eta),\det(g)\neq0\}.
 $$

Likewise invariant compatible almost symplectic structures are elements of the set
 $$
\La^2_J\m^*=\{\oo\in(\La^2\m^*)^\h:\oo(J\xi,J\eta)=\oo(\xi,\eta),\oo^3\neq0\}.
 $$
The K\"ahler form $\oo\in\La^2_J\m^*$ associated to $g\in S^2_J\m^*$ is defined by
$\oo(\xi,\eta)=g(J\xi,\eta)$. This formula makes a bijective correspondence $S^2_J\m^*\simeq\La^2_J\m^*$.

Note that two invariant almost Hermitian metrics $g,\tilde g$ define a symmetric (with respect to both $g$
and $\tilde g$) invertible operator $A:\m\to\m$ by $\tilde g(\xi,\eta)=g(A\xi,\eta)$.
Since both sides are $\mathfrak{h}$-invariant, this $A$ commutes with $\mathfrak{h}$.
Moreover the compatibility condition implies that $A$ is complex linear, $[A,J]=0$.
Thus the operator $A$ belongs to the complex endomorphism ring $\op{End}_\h(\m,J)$.

 \subsection{Classification of almost Hermitian structures}

Let us list all invariant almost Hermitian structures according to the types of $\h$-modules as in
Theorem \ref{Thm3} (we'll omit the word "almost" for the metric).

 \smallskip

{\bf Case 1:} $\h=\mathfrak{su}(2)$, $\m=V\oplus\C$, where $V\simeq\H$.
There are Hermitian metrics $g_1$ on $V$, $g_2$ on $\mathbb{C}$.
Since $\op{End}_\h(\m,J)=\C\oplus\C$ (acting submodule-wise)
and the only symmetric endomorphism are $A\in\R\oplus\R$,
the general invariant compatible metric on $\mathfrak{m}$ is $g=a\,g_1+b\,g_2$.
The signature of $g$ is any even $(2k,6-2k)$ depending on $a,b\neq0$.

The almost-symplectic form on $\C$ component is unique up to scaling (which we fix).
Since the endomorphism ring of the first component is $\op{End}_\h(V)=\H$
and only imaginary quaternions are skew-symmetric, we conclude
$\omega(\xi,\eta)=g_1(\xi_1q,\eta_1)+g_2(i\xi_2,\eta_2)$ for some $q\in\Im(\H)\setminus0$,
where $\xi=\xi_1+\xi_2,\eta=\eta_1+\eta_2\in V\oplus\C$.

\smallskip

{\bf Case 2:} $\h=\mathfrak{su}(2)$, $\m=\mathfrak{ad}^\C$.
The operator $J$ induces the equivariant splitting $\m=\mathfrak{ad}\oplus J\mathfrak{ad}$.
The Riemannian metric  $g$, which is the direct sum of the Killing forms on each summand
is compatible. Since $\m$ is a complex irreducible representation, $\op{End}_\h(\m,J)=\C$.
Then $g(A\xi,\eta)$ for $A\in\op{End}_\h(\m,J)$ is non-symmetric unless $A\in\R$,
and so the invariant compatible metric is unique up to scaling (but $g$ depends on $J$).

Decomposing $\h$-modules $\La^2\m=3\La^2\mathfrak{ad}\oplus S^2\mathfrak{ad}=
3\mathfrak{ad}\oplus W^5\oplus\R^1$, where $W^5$ is irreducible and $\R^1$ trivial, we
conclude that the only almost symplectic form comes from the compatible metric.

\smallskip

{\bf Case 3:} $\h=\mathfrak{su}(1,1)$, $\m=V\oplus\C$.
Here $V=U^\mathbb{C}$ for the standard $\mathfrak{sl}_2$ representation $U$.
This case is similar to Case 1, and the general invariant metric is again
$g=a\,g_1+b\,g_2$, $a,b\neq0$. Now however the metric $g_1$ is of split
signature, $g_2$ is Riemannian, and so $g$ has type $(4,2)$ (or $(2,4)$, but we will
not distinguish the opposite signatures).

Since $\La^2V=\mathfrak{ad}\oplus\R^3$, the space of invariant 2-forms is 3-dimensional.
Indeed, $\op{End}_\h(V)=\mathfrak{gl}_2$, and so a general almost symplectic form is
$\omega(\xi,\eta)=g_1(A\xi_1,\eta_1)+g_2(i\xi,\eta)$, $A\in\op{GL}_2$.
We can write this via split quaternions $\H_s$ (isomorphic to $\mathfrak{gl}_2$ as algebra) so:
$\omega(\xi,\eta)=g_1(\xi_1q,\eta_1)+g_2(i\xi_2,\eta_2)$, $q\in\Im(\H_s)$, $q^2\neq0$.

\smallskip

{\bf Case 4:} $\h=\mathfrak{su}(1,1)$, $\m=\mathfrak{ad}^\C$.
This is similar to Case 2: the (invariant) almost symplectic structure $\oo$ is unique up to scale;
it is $J$-independent and is $J$-compatible for every $J\in\op{End}_\h(\m)$, $J^2=-\1$. The Hermitian metric
$g=-i_J\oo$ depends on $J$ and has signature $(4,2)$.

\smallskip

{\bf Case 5:} $\h=\mathfrak{sl}_2(\C)$, $\m=V+\C$.
Since $\mathfrak{sl}_2(\C)$ contains the subalgebras $\mathfrak{su}(2)$ and $\mathfrak{su}(1,1)$, the
metric on $V$ component must be invariant under both of them, and no such metric exists.

Consider the 2-form $\oo_1(x,y)=g_1(xq,y)$ on $V$ invariant with respect to $\mathfrak{su}(2)$, $q\in\Im(\H)$, see Case 1. Since $\mathfrak{su}(1,1)$ can be identified with $i\mathfrak{su}(2)$
inside $\mathfrak{sl}_2(\C)$, we compute that $\oo_1$ is $\mathfrak{su}(1,1)$-invariant iff
$q\in\Im(\H)\cap i\Im(\H)$, i.e. $q\perp i$. On the $\C$ factor the invariant symplectic
form is unique. Thus the space of invariant almost symplectic structures on $\m$ is given by 2
parameters up to scale: $\oo=\oo_1+\oo_2$.

\smallskip

{\bf Case 6:} $\h=\mathfrak{sl}_2(\C)$, $\m=\mathfrak{ad}$.
The Killing form $K$ provides an invariant metric on $\mathfrak{ad}$, but it is not Hermitian as
$K(J\xi,J\eta)=-K(\xi,\eta)$. Since any other metric or 2-form must be related to $K$ by an
operator $A\in\op{End}_\h(\m)=\C$, no compatible metric and no almost-symplectic form exists.
Instead we have two invariant anti-compatible metrics $K(\xi,\eta)$ and $K(J\xi,\eta)$.
The almost complex structure $J=i$ on $\m$, and hence on the corresponding homogeneous manifold
$M=\op{SL}_2(\C)\times\op{SL}_2(\C)/\op{SL}_2(\C)_\text{diag}\simeq\op{SL}_2(\C)$, the structure $J$ is integrable.

\smallskip

{\bf Case 7:} $\h=\mathfrak{su}(3)$, $\m=V$.
By definition we have an invariant Hermitian metric $g$ of signature $(6,0)$ on $\m$.
The endomorphism ring is $\C$, but $g(A\xi,\eta)$ is not symmetric unless $A\in\R$, so
$g$ is unique up to scaling. The almost symplectic form is also unique and compatible.

The corresponding homogeneous manifold is $\mathbb{S}^6$, and $J$ is the unique invariant
almost complex structure. Known as the Calabi structure, it is well studied. In particular,
the triple $(g,J,\oo)$ is strongly nearly K\"ahler (SNK) and the Hermitian metric $g$ is
3-symmetric and Einstein.

\smallskip

{\bf Case 8:} $\h=\mathfrak{su}(2,1)$, $\m=V$.
By definition we have an invariant pseudo-Hermitian metric $g$ of signature $(4,2)$ on $\m$.
The endomorphism ring is $\C$, but $g(A\xi,\eta)$ is not symmetric unless $A\in\R$, so
again the pseudo-Riemannian metric $g$ and the almost symplectic form $\oo$ are both unique
(up to scaling) and compatible.

The corresponding homogeneous manifold is $\mathbb{S}^{2,4}\simeq \mathbb{S}^2\times\R^4$, and $J$ is
the unique invariant almost complex structure. It is not integrable, has non-degenerate Nijenhuis tensor,
and is the split analog of the Calabi structure. The triple $(g,J,\oo)$ is strongly nearly
pseudo-K\"ahler and the Hermitian metric $g$ is Einstein.

 \subsection{K\"ahler and nearly K\"ahler structures}

Examining the list of all our homogeneous structures we conclude that the only K\"ahler metrics
are the cases A1.1, A3.1 and A1.3, A3.4 of Tables from the Appendix (we pair the similar cases).
Even though the groups on which the structures live are solvable (the
topology is rather simple), the metric properties are non-trivial. We summarize the results.

 \begin{theorem}
The only (pseudo-)K\"ahler homogeneous 6D manifolds with semi-simple (nontrivial) isotropy are
quotients $M=G/H$ with $H=SU(2)$ or $H=SU(1,1)$ with reducible isotropy representation $\m=V+\C$.
As an $H$-module $V=\H$ or resp. $V=\H_s$.

The corresponding reductive complement $\m$ is a Lie algebra, so $M$ is a (quotient of) Lie group
with the Lie algebra given by the following relations (two cases). Below $\a,r\in\R$, $\varepsilon\in\{0,1\}$ are
the parameters, $\a\neq0$, and the
vectors $x,y\in V,\ \ee,i\ee\in\C$.
 \smallskip
 $$
[x,y]=\a\,\Re(x i\bar y)\ee,\ [x,i\ee]=x(\tfrac12+ri),\ [\ee,i\ee]=\ee.
\leqno{1)}\quad\m:
 $$
Thus $\m^6$ is 1-dimensional "right\footnote{This means extension by derivations; terminology
comes from Fuks \cite{F}, and is opposed to left=central extensions. For a Lie algebra $\g$
its "right extensions" are enumerated by the cohomology group $H^1(\g,\g)$ and "left extensions"
by $H^2(\g)$.}\ extension" of the 5D
Heisenberg algebra. The symplectic form is $\oo=\a\,\oo_V+\oo_\C$,
$\oo_V(x,y)=\Re(x i\bar y)$, $\oo_\C(\ee,i\ee)=1$. The Hermitian metric is $g(\xi,\eta)=\oo(\xi,J\eta)$
with $J(x,\ee)=(xi,i\ee)$ in the decomposition $V+\C$, with signature $(6,0)$ for $\h=\mathfrak{su}(2)$
and $\a>0$ and signature $(4,2)$ else.
 % ($\a<0$ or $\h=\mathfrak{su}(1,1)$).
Moreover $g$ is Einstein with the cosmological constant $-4$, and is not conformally flat.
\smallskip
 $$
[x,i\ee]=rxi,\ [\ee,i\ee]=\varepsilon\ee.
\leqno{2)}\quad\m:
 $$
Thus $\m^6$ is a 1-dimensional "right extension" of the 5D
Abelian algebra. The symplectic form is $\oo=\oo_V+c\,\oo_\C$.
The Hermitian metric is again $g(\xi,\eta)=\oo(\xi,J\eta)$ with $J(x,\ee)=(xi,i\ee)$.
It has signature $(6,0)$ for $\h=\mathfrak{su}(2)$, $c>0$ and signature $(4,2)$ else.
The metric is not Einstein or conformally flat unless $\varepsilon=0$, when $g$ is flat.
 \end{theorem}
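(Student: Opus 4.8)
The plan is to cut out the K\"ahler locus by imposing three successive requirements---integrability of $J$, existence of a compatible invariant metric, and closedness of the K\"ahler form $\oo$---and then to compute the curvature of the two families that survive. First I would run through the classification of invariant almost Hermitian structures (Cases 1--8 above) together with the Nijenhuis data recorded in the Appendix tables. The two $\mathfrak{sl}_2(\C)$ cases carry no compatible invariant metric at all, so they are immediately excluded; the representations $\m=V$ over $\mathfrak{su}(3)$ and $\mathfrak{su}(2,1)$ have non-degenerate $N_J$ and hence are non-integrable; and the $\mathfrak{sl}_3$ cases are flat. This confines the search to the reducible type $\m=V+\C$ (Cases~1 and~3) and to $\m=\mathfrak{ad}^\C$ (Cases~2 and~4), and within each only the sub-cases with $N_J=0$ in the tables can possibly be K\"ahler.

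Next I would impose $d\oo=0$. Since $\oo$ is invariant, its differential is purely algebraic,
$$
d\oo(X,Y,Z)=-\oo(\pi[X,Y],Z)-\oo(\pi[Y,Z],X)-\oo(\pi[Z,X],Y),
$$
with $\pi:\g\to\m$ the projection along $\h$ and the brackets taken in $\g$. For $\m=V+\C$ the compatible form is $\oo=\a\,\oo_V+\oo_\C$, where $\oo_V(x,y)=\Re(xi\bar y)$ and $\oo_\C(\ee,i\ee)=1$. The block structure of $\oo$ kills $d\oo$ on every triple except those of the form $(x,y,\ee)$ and $(x,y,i\ee)$; evaluating there, a short quaternionic computation yields $d\oo(x,y,i\ee)=\a(\e-1)\Re(xi\bar y)$ in the Heisenberg branch, forcing $\e=1$ and thus the coefficient $\tfrac12$ in $[x,i\ee]=x(\tfrac12+ri)$ (case~1), while in the abelian branch the same two triples force the $[x,\ee]$-bracket to vanish and the real part of the $[x,i\ee]$-coefficient to vanish, leaving $[x,i\ee]=rxi$ and $[\ee,i\ee]=\e\ee$ (case~2). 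Applying this test to $\mathfrak{ad}^\C$ shows that these structures fail either integrability or closedness, removing Cases~2 and~4; replacing $\H$ by $\H_s$ repeats the whole argument for $\mathfrak{su}(1,1)$, producing the paired cases A3.1 and A3.4 together with the split signature of $g_1$, hence the stated signatures $(6,0)$ or $(4,2)$.

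Finally, for the two surviving families I would read off the Riemannian geometry from the structure equations. Using the Koszul formula on $\m$ I would write down the Levi-Civita connection and then the Riemann and Ricci tensors; in case~1 the outcome should be $\op{Ric}=-4\,g$ independently of $r$ (Einstein, cosmological constant $-4$), with non-vanishing Weyl or Cotton tensor so that $g$ is not conformally flat, while in case~2 the Ricci tensor fails to be a multiple of $g$ when $\e=1$, and the connection is flat when $\e=0$. The hard part will be precisely this last step: the explicit curvature computation with the residual parameters $\a,r,c$ still present is the genuine obstacle, and it is the piece I would delegate to symbolic computation in Maple, the earlier integrability and closedness reductions being light by comparison.
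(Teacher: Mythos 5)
Your proposal is correct and follows essentially the same route as the paper: the authors also obtain the result by inspecting the classification of compatible invariant metrics together with the table entries for $N_J=0$ and the algebraic condition $d\oo=0$, which singles out exactly the cases A1.1/A3.1 and A1.3/A3.4, and they likewise relegate the Einstein/conformal-flatness verification to a symbolic (Maple) curvature computation. Your explicit Chevalley--Eilenberg evaluation of $d\oo$ on the triples $(x,y,\ee)$ and $(x,y,i\ee)$ is precisely the computation that produces the ``condition for $d\omega=0$'' column of the tables.
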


It is also interesting to study when $(M,g,J,\oo)$ is strongly nearly K\"ahler (SNK), meaning
that for the Levi-Civita connection $\nabla$ the tensor $\nabla\oo$ is (nonzero) totally skew symmetric
($\nabla\oo=\frac13d\oo\ne0$). It is a restrictive condition. For instance, the Nijenhuis tensor $N_J$ is non-degenerate
and the geometry is constrained by the 'splitting principle' of P.-A. Nagy \cite{Na}.
Homogeneous SNK structures were classified by J.-B. Butruilles \cite{Bu}.
The classification up to a covering is:
 \begin{itemize}
\item $\mathbb{S}^6=G_2^c/SU(3)$,
\item $\mathbb{S}^3\times \mathbb{S}^3=SU(2)\times SU(2)\times SU(2)/SU(2)_\text{diag}$
\item $\C P^3=SU(4)/(SU(3)\times U(1))=Sp(4)/(SU(2)\times U(1))$,
\item the flag variety $\mathbb{F}(1,2)=SU(3)/(U(1)\times U(1))$.
 \end{itemize}
The first two belong to our list
(the invariant structure $J$ on $\mathbb{S}^3\times \mathbb{S}^3$ corresponds to
the case A2.1 from the Tables with parameters $(r,t)=\pm\bigl(\frac1{\sqrt{3}},\frac2{\sqrt{3}}\bigr)$,
so it has more symmetry than observed in \cite{Bu}), while the
last two do not (as they have reductive and Abelian isotropy respectively, and can be detected by
a refinement of our calculation).

Homogeneous pseudo-SNK of signature $(2,4)$ (this is given by the same condition:
$\nabla\oo$ nonzero totally skew symmetric) with semi-simple isotropy can be extracted
from our classification\footnote{There are obvious pseudo-SNK analogs of signature $(2,4)$ of the last
two entries in Bitruilles' list, but we present here only the spaces $G/H$ with semi-simple $H$.}:
 \begin{itemize}
\item $\mathbb{S}^{2,4}=G_2^*/SU(1,2)$,
\item
$SL(2)\times SL(2)=SU(2,1)\times SU(2,1)\times SU(2,1)/SU(2,1)_\text{diag}$
(the invariant structure $J$ on this $M^6$ corresponds to
the case A4.1 of the Tables with parameters $(r,t)=\pm\bigl(\frac1{\sqrt{3}},\frac2{\sqrt{3}}\bigr)$)
\item the left-invariant structure on the (solvable) Lie group with Lie algebra from the
case A3.2 of the Tables with parameters (after rescaling $\oo$) equal to
$r=-\frac3{2t}$, $\epsilon=+1$, $\a=0$, $p=t(i+j)\in\mathbb{H}_s$, $u=-\frac12k\in\H_s$,
$q=i$ and $b=\frac12t\,i\in\mathbb{H}_s$.
 \end{itemize}

 \begin{rk}
Since the latter homogeneous pseudo-SNK structure does not have an SNK analog (the first two have obviously
the same complexification as the first two entries of Butruilles' list), let us write the structures
explicitly. The structure equations of the Lie algebra $\m$ are ($t\neq0$):
 \begin{gather*}
[e_1,e_2]=te_5, [e_1,e_3]=-te_5, [e_2,e_4]=-te_5, [e_3,e_4]=te_5, \\
[e_1, e_5]=-\tfrac32(e_2+e_3), [e_2,e_5]=\tfrac32(e_1-e_4), [e_3,e_5]=\tfrac32(e_4-e_1), \\
[e_4,e_5]=-\tfrac32(e_2+e_3), \
[e_1,e_6]=e_1-\tfrac12e_4, [e_2,e_6]=e_2+\tfrac12e_3, \\
[e_3,e_6]=e_3+\tfrac12e_2, [e_4,e_6]=e_4-\tfrac12e_1, [e_5,e_6]=e_5
 \end{gather*}
and the almost complex structure $J$ and the metric $g$ are given by the formulae
(we denote by $\te_i$ the dual basis of $\m^*$)
 \begin{gather*}
J=(e_2\ot\te_1-e_1\ot\te_2+e_3\ot\te_4-e_4\ot\te_3)+(e_6\ot\te_5-e_5\ot\te_6),\\
g=\tfrac12\,t\,(\te_1^2+\te_2^2-\te_3^2-\te_4^2)+\te_5^2+\te_6^2 \
 \end{gather*}
Notice that though this (1-parametric) structure lives on a Lie group $M^6$ (or its finite quotient),
it has the symmetry group of dimension 9 and
so can be represented on the homogeneous space $G^9/H^3\simeq M^6$.
 \end{rk}

 \subsection{SKT and Gray-Hervella classes}

In the next sections we will also study the strong K\"ahler with torsion (SKT) property
$\partial\bar\partial\oo=0$ (in addition to $N_J=0$) important in generalized K\"ahler geometry and supersymmetric
nonlinear sigma models. The named property is equivalent to
 $$
d_J^2\oo=0,\qquad d_J=d\circ J
 $$
(where $J\circ\z=\z(J\cdot,J\cdot,..)$), and we shall study generalizations when
$d_J^k\oo=0$ for larger $k$ (and $J$ not necessarily integrable).
For instance, the standard almost Hermitian structure $(g,J,\oo)$
on $\mathbb{S}^6$ is not SKT and $d_J^3\oo\neq0$, but the derived top form vanishes: $d_J^4\oo=0$.

There are many structures $J$ of type III, which are not SKT, but satisfy the condition $d_J^3\oo=0$.
The only occasions of SKT are these:

 \begin{theorem}
The only homogeneous Hermitian manifolds $M^6$ with semi-simple isotropy, which
satisfy the SKT property but do not belong to either K\"ah\-ler or pseudo-K\"ahler class,
are equivalent to the following.

\smallskip

1) The structure of case A1.2 with parameters $q=\cos\theta\cdot i+\sin\theta\cdot j$,
$p=\pm\sqrt{3\sin^2\theta-1}\cdot q+\sin\theta\cdot k$.
The Lie algebra $\m$ is the central extension
 $$
0\to\R^2\longrightarrow\m\longrightarrow\R^4\to0,
 $$
whence the homogeneous space is $M=G/H$ is an $\R^2$-bundle over $\R^4$.

\smallskip

2) The structure of case A1.3 with parameters $q=i$, $\a=0$, $\beta=-\frac12$, $\epsilon=1$
or of case A3.4 with the same parameters and in addition $p=0$, $u=\lambda\,i$.
The Lie algebra $\m$ is the "right" extension
 $$
0\to\R^4\longrightarrow\m\longrightarrow\mathfrak{s}_2\to0,
 $$
where $\mathfrak{s}_2=\op{Lie}(S_2)$ is the solvable non-abelian 2D Lie algebra
of the Lie group $S_2$, represented via rank 1 homomorphism $S_2\to\C^*\stackrel{\op{diag}}\hookrightarrow\op{GL}_2(\C)\subset\op{GL}_4(\R)$
(in both cases $\R^4=V=\H$ or $\H_s$ is equipped with the complex structure $i$).
The homogeneous space is $M=G/H\simeq\R^4\rtimes S_2$.

\smallskip

3) The structure of case A3.4 with parameters $q=i$, $\a=0$, $\epsilon=1$, $p=\frac12(i+j)$, $u=-\frac12k$ and
$\beta=-1$ or $\beta=\frac12$. The Lie algebra $\m$ is the "right" extension given by the same sequence as in 2),
but now the homomorphism $\vp:S_2\to\op{GL}_2(\C)\subset\op{GL}_4(\R)$ has rank 2:
$\vp(\ee)=R_{\frac12(i+j)}$, $\vp(i\ee)=R_{\beta-\frac12k}$, where $R_h$ is the right multiplication by
the split-quaternion $h$. Again the homogeneous space is $M=G/H\simeq\R^4\rtimes S_2$.
 \end{theorem}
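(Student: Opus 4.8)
The plan is to start from the complete list of invariant \emph{integrable} almost Hermitian structures $(g,J,\oo)$ assembled in Section \ref{S4} and to test the SKT condition on each one. Since SKT means $\p\bar\p\oo=0$, equivalently $d_J^2\oo=0$ with $d_J=d\circ J$, and since every (pseudo-)K\"ahler structure $d\oo=0$ already established in the preceding theorem is trivially SKT, those cases are discarded at the outset. What remains to be examined is precisely the list of strictly non-K\"ahler integrable Hermitian structures, i.e. the type III left-invariant structures on 6D Lie groups together with any integrable type II structure. The decisive point is that the entire condition is algebraic: on a reductive space invariant forms are $\h$-invariant elements of $\La^\bullet\m^*$, and, as recorded in Section \ref{S2+}, the exterior differential of an invariant form is computed by the Cartan (Chevalley--Eilenberg) formula using only the $\m$-component of the brackets $\mathcal B_\m$. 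When $\m$ is a subalgebra this is simply the Lie-algebra differential of $\m$, so $d\oo$, $d_J\oo$ and hence $d_J^2\oo$ become explicit tensors in the structure constants and in the parameters of $J$ and $\oo$ tabulated in the Appendix.

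First I would fix, for each relevant case, a $J$-adapted basis of $\m$ and write out the most general invariant compatible form $\oo$ permitted by the endomorphism-ring analysis of Section \ref{S4} (for example $\oo=a\,\oo_V+b\,\oo_\C$ in the $V+\C$ cases, and the correspondingly larger families when $\op{End}_\h(\m)$ is bigger). Next I would compute $d\oo\in\La^3\m^*$ from the structure equations, apply $J$ to obtain $d_J\oo$, differentiate once more, and apply $J$ a second time, thereby reducing $d_J^2\oo=0$ to a finite system of polynomial equations in the structure parameters ($\a,r,\e,q,p,u,\dots$) and in the metric parameters. Throughout one must carry the non-degeneracy constraints $\oo^3\neq0$ and $J^2=-\1$ alongside the equations, so that degenerate or non-compatible spurious solutions are discarded rather than counted.

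Carrying this out case by case, the SKT equations prove highly restrictive and collapse each parameter family down to the three solutions listed. The identification of every surviving solution with a concrete model, namely the central extension $0\to\R^2\to\m\to\R^4\to0$ in case 1 and the rank-1 and rank-2 ``right'' extensions $0\to\R^4\to\m\to\mathfrak{s}_2\to0$ in cases 2 and 3, is then read off directly from the resulting structure constants. The main obstacle is not a single hard lemma but the combinatorial bulk: one must sweep through all integrable non-K\"ahler entries of Tables A1--A4, solve each polynomial system completely, and certify that no additional sporadic solutions hide in the lower-dimensional strata of the parameter spaces. This is precisely the bookkeeping automated in the accompanying Maple worksheets, and the delicate part is making the case split exhaustive, for instance $\e=0$ versus $\e=1$, $q$ generic versus $q\perp i$, and the null-vector and split-signature subtleties peculiar to the $\mathfrak{su}(1,1)$ cases of Table A3.
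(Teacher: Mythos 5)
Your proposal is correct and follows essentially the same route as the paper: the paper gives no written-out argument for this theorem, relying (as for the adjacent Gray--Hervella classification) on a direct Maple-assisted sweep through the integrable, compatible, non-(pseudo-)K\"ahler entries of the Tables, computing $d_J^2\oo$ via the Chevalley--Eilenberg differential built from the projected brackets $\pi\circ\mathcal{B}_\m$ and solving the resulting polynomial systems in the structure and metric parameters. Your outline captures both the reduction to an algebraic condition on $\m$ and the need for an exhaustive case split (including the non-subalgebra type II cases and the null-vector subtleties of Table A3), which is exactly the content of the paper's verification.
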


It is also interesting which Gray-Hervella (GH) classes of almost Hermitian manifolds
are realizable within our homogeneous 6D manifolds with semi-simple isotropy
(in our classification only Tables A1 and A2 correspond to almost Hermitian manifolds,
so in what follows we refer only to them).
In the work \cite{GH} 16 classes of such manifolds were encoded by the
set $\mathcal{P}(\{1,2,3,4\})$ of all subsets of the 4-point set.

The class corresponding to the empty set $\emptyset$ is the K\"ahler class $\mathcal{K}$.
The class $\mathcal{W}$ corresponding to the whole set $\{1,2,3,4\}$ consists of all almost Hermitian manifolds.
The basic GH-classes $\mathcal{W}_i$, $1\le i\le4$, correspond to 1-point subsets.
The other classes correspond to the direct sum of representations $W_i$ of the basic
classes (equivalently the basic modules can be taken $W\ominus W_i$, the other modules
being the intersections of these; the non-trivial GH-classes are then given by the union of the conditions determining
$\sum_{j\neq i}\mathcal{W}_j$).

In order to represent the general class $\mathcal{W}$ by a disjoint union,
we modify the Gray-Hervella classes to the classes $\tilde{\mathcal{W}}_\z$, $\z\in\mathcal{P}(\{1,2,3,4\})$.
Namely, $\tilde{\mathcal{W}}_\z$ consists of all elements of $\sum_{i\in\z}\mathcal{W}_i$,
which do not belong to $\tilde{\mathcal{W}}_\t$ with $\t\varsubsetneq\z$.
This definition is inductive starting from $\tilde{\mathcal{W}}_\emptyset=\mathcal{K}$.
For instance, $\tilde{\mathcal{W}}_i=\mathcal{W}_i\setminus\mathcal{K}$,
$\tilde{\mathcal{W}}_{i,j}=(\mathcal{W}_i\oplus\mathcal{W}_j)\setminus(\mathcal{W}_i\cup\mathcal{W}_j)$
for $i\neq j$ etc.

We already discussed and classified the K\"ahler $\mathcal{K}$ and nearly K\"ahler $\mathcal{W}_1=\mathcal{NK}$ classes,
which also gives the description of strictly nearly K\"ahler class $\tilde{\mathcal{W}}_1$.
 % Let us exclude the K\"ahler structures below (since obviously $\mathcal{K}\subset\mathcal{W}_i$).
Inspection of the tables rules out the class $\mathcal{W}_2=\mathcal{AK}$ of almost K\"ahler manifolds
with the exception of the K\"ahler structures
(observe from the Tables A1-A2 that whenever $d\oo=0$, then also $N_J=0$,
so the structure belongs to the K\"ahler class $\mathcal{K}$).
The class $\mathcal{W}_3=\mathcal{H}\cap\mathcal{SK}$ of special Hermitian manifolds
is realized only (again with the exception of the K\"ahler class $\mathcal{K}$) by the Tables A1.2
(several parametric cases) and A2.2 ($r=0,t=\pm1$); the latter is
$M=SL_2(\C)$ with the standard complex structure.
The class $\tilde{\mathcal{W}}_4$ (containing locally conformally K\"ahler but nohn-K\"ahler manifolds)
is realized only by the Table A1.4 ($q=\pm i$), in particular for $\e=-1$
we obtain the Calabi-Eckmann structure on $\mathbb{S}^1\times\mathbb{S}^5$.
For the other classes we have:

 \begin{theorem}
The GH-classes, realized as homogeneous almost Hermitian manifolds $M^6$ with semi-simple isotropy,
are precisely the following:
$\mathcal{K}$, $\tilde{\mathcal{W}}_1$, $\tilde{\mathcal{W}}_3$, $\tilde{\mathcal{W}}_4$,
$\tilde{\mathcal{W}}_{1,2}$, $\tilde{\mathcal{W}}_{1,3}$, $\tilde{\mathcal{W}}_{2,3}$,
$\tilde{\mathcal{W}}_{3,4}$, $\tilde{\mathcal{W}}_{1,2,3}$, $\tilde{\mathcal{W}}_{1,2,4}$, 
$\tilde{\mathcal{W}}_{1,3,4}$, $\tilde{\mathcal{W}}_{2,3,4}$,
$\tilde{\mathcal{W}}=\tilde{\mathcal{W}}_{1,2,3,4}$.
 \end{theorem}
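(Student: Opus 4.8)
The plan is to locate each invariant almost Hermitian structure of the classification within the Gray--Hervella lattice and then tabulate which of the sixteen cells are occupied. The Gray--Hervella type is governed by the intrinsic torsion, equivalently by the pair $(N_J,d\oo)$: under the structure group $U(3)$ the torsion space splits as $\mathcal{W}_1\oplus\mathcal{W}_2\oplus\mathcal{W}_3\oplus\mathcal{W}_4$, where $N_J$ supplies the $\mathcal{W}_1\oplus\mathcal{W}_2$ summand and $d\oo$ the $\mathcal{W}_1\oplus\mathcal{W}_3\oplus\mathcal{W}_4$ summand. Concretely $\mathcal{W}_1$ is the $(3,0)+(0,3)$ part of $d\oo$ (the nearly K\"ahler component, common to $N_J$ and $d\oo$), $\mathcal{W}_4$ is the Lee part $\theta\we\oo$, $\mathcal{W}_3$ is the remaining coclosed primitive $(2,1)+(1,2)$ part of $d\oo$, and $\mathcal{W}_2$ is the component of $N_J$ complementary to $\mathcal{W}_1$. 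For a homogeneous structure all of these are algebraic in the brackets $\mathcal{B}_\m$ and in $(g,J,\oo)$ on $\m$: one computes $d\oo$ by the Cartan formula from the brackets on $\m$ and $N_J$ by the formula of Section \ref{S2+}, and then projects onto the four $U(3)$-irreducible pieces.

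Since Tables A1 and A2 are the only almost Hermitian cases, the computation reduces to these two families, and I would first isolate a structural constraint that organizes everything. The $\mathcal{W}_4$ component is carried by the Lee form $\theta$, an invariant $1$-form, hence an element of $\op{Hom}_\h(\m,\R)$. In Case 2 ($\m=\mathfrak{ad}^\C$, with no trivial $\h$-submodule) one has $\op{Hom}_\h(\m,\R)=0$, so $\theta=0$ and $\mathcal{W}_4$ vanishes identically on all of Table A2. In Case 1 ($\m=V\oplus\C$ with $V$ irreducible and $\C$ trivial) the only invariant $1$-forms live on the $\C$-factor, so $\mathcal{W}_4$ may be nonzero but is entirely controlled by the $\C$-directions. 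Thus every class whose subset contains the index $4$ must be realized inside Table A1.

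With this in hand I would carry out the projection case by case over the parameter ranges of A1 and A2 (these are the tedious symbolic computations), recording for each parameter value the occupied subset $\z\subset\{1,2,3,4\}$ and then assembling the list of realized classes $\tilde{\mathcal{W}}_\z$. The positive half of the statement is then pure exhibition: $\mathcal{K}$ and the strictly nearly K\"ahler class $\tilde{\mathcal{W}}_1$ were already produced (the K\"ahler classification above and the structure on $\mathbb{S}^3\times\mathbb{S}^3$ from A2.1), $\tilde{\mathcal{W}}_3$ by A1.2 and A2.2, $\tilde{\mathcal{W}}_4$ by A1.4, and each of the remaining mixed classes in the list by a suitably generic choice of parameters in A1 or A2.

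The main obstacle is the negative half, namely proving that $\tilde{\mathcal{W}}_2$, $\tilde{\mathcal{W}}_{1,4}$ and $\tilde{\mathcal{W}}_{2,4}$ never occur, since these are statements over the entire parameter space rather than pointwise checks. For $\tilde{\mathcal{W}}_2=\mathcal{AK}\setminus\mathcal{K}$ I would invoke the observation already recorded from the tables that $d\oo=0$ forces $N_J=0$, so every invariant almost K\"ahler structure here is K\"ahler and $\tilde{\mathcal{W}}_2$ is empty. For the two surviving classes, both of which contain the index $4$, the structural constraint confines attention to Table A1, and the crux is to establish the implication: on A1 a nonzero Lee component $\mathcal{W}_4$ cannot coexist with a nonzero $\mathcal{W}_1$ or $\mathcal{W}_2$ without also switching on $\mathcal{W}_3$. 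Equivalently, imposing $\mathcal{W}_4\neq0$ together with $\mathcal{W}_3=0$ should force $\mathcal{W}_1=\mathcal{W}_2=0$ (pure locally conformally K\"ahler), thereby ruling out $\tilde{\mathcal{W}}_{1,4}$ and $\tilde{\mathcal{W}}_{2,4}$ simultaneously. Verifying that the corresponding algebraic conditions on the parameters $(\a,r,\e,q,\dots)$ of A1 admit no common solution -- that is, that these forbidden patterns are genuinely obstructed rather than merely non-generic -- is the delicate point where the symbolic computation is indispensable.
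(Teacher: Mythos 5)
Your overall framework is the same as the paper's: the proof there is a direct case-by-case symbolic computation over Tables A1--A2, organized by exactly the two observations you isolate --- that on Table A2 the module type forces the structures into $\mathcal{W}_1\oplus\mathcal{W}_3$ (in particular the Lee/$\mathcal{W}_4$ component vanishes, so any class containing the index $4$ must come from Table A1), and that $\tilde{\mathcal{W}}_2$ is empty because $d\oo=0$ forces $N_J=0$ throughout the tables. The positive half by exhibition also matches (A2.1 for $\tilde{\mathcal{W}}_1$, A1.2/A2.2 for $\tilde{\mathcal{W}}_3$, A1.4 for $\tilde{\mathcal{W}}_4$, etc.).

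However, there is a concrete error in your proposed treatment of the remaining excluded classes. You reduce the exclusion of $\tilde{\mathcal{W}}_{1,4}$ and $\tilde{\mathcal{W}}_{2,4}$ to the implication ``$\mathcal{W}_4\neq0$ and $\mathcal{W}_3=0$ force $\mathcal{W}_1=\mathcal{W}_2=0$.'' This implication is false, and provably so from the statement you are trying to establish: the class $\tilde{\mathcal{W}}_{1,2,4}$ is in the realized list (it occurs in case A1.4), and a structure of that type has $\mathcal{W}_3=0$, $\mathcal{W}_4\neq0$ and \emph{both} $\mathcal{W}_1\neq0$ and $\mathcal{W}_2\neq0$. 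So your ``crux'' would simultaneously rule out a class that must be ruled in. What actually has to be shown is the finer assertion that, on Table A1, whenever $\mathcal{W}_3=0$ and $\mathcal{W}_4\neq0$ the components $\mathcal{W}_1$ and $\mathcal{W}_2$ vanish or fail to vanish \emph{together} --- i.e.\ the support of the intrinsic torsion is never exactly $\{1,4\}$ or exactly $\{2,4\}$. There is no clean structural reason for this offered in the paper either; it emerges only from the exhaustive projection of $(N_J,d\oo)$ onto the four $U(3)$-irreducible pieces over the full parameter ranges of A1.1--A1.4 (done in Maple). So your plan is workable once the target implication is corrected, but as stated the key step would fail the very computation you propose to run.
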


Thus the non-realizable (via our models) GH-classes are 
$\tilde{\mathcal{W}}_2$, $\tilde{\mathcal{W}}_{1,4}$, $\tilde{\mathcal{W}}_{2,4}$.

 \begin{proof}
The proof is the direct calculation (in this algebraic computation we used Maple).
Let us indicate, which sub-classes in Tables A1 and A2 realize the GH-classes
(omitting the precise values of the parameters):

 \begin{tabbing}
\quad\ \= A1.4: $\tilde{\mathcal{W}}_4$, $\tilde{\mathcal{W}}_{1,2}$, $\tilde{\mathcal{W}}_{1,3}$,
$\tilde{\mathcal{W}}_{2,3}$, $\tilde{\mathcal{W}}_{1,2,3}$, $\tilde{\mathcal{W}}_{2,3,4}$, $\tilde{\mathcal{W}}$.
\qquad\=
A2.4: $\tilde{\mathcal{W}}_{1,3}$.\kill
\>
A1.1: $\mathcal{K}$, $\tilde{\mathcal{W}}_{3,4}$, $\tilde{\mathcal{W}}_{1,2,3}$, $\tilde{\mathcal{W}}$.\>
A2.1: $\tilde{\mathcal{W}}_1$, $\tilde{\mathcal{W}}_{1,3}$.\\
\>
A1.2: $\tilde{\mathcal{W}}_3$, $\tilde{\mathcal{W}}_{1,2}$, $\tilde{\mathcal{W}}_{3,4}$,
$\tilde{\mathcal{W}}_{1,2,3}$, $\tilde{\mathcal{W}}$.\>
A2.2: $\tilde{\mathcal{W}}_3$, $\tilde{\mathcal{W}}_{1,3}$.\\
\>
A1.3: $\mathcal{K}$, $\tilde{\mathcal{W}}_{3,4}$, $\tilde{\mathcal{W}}_{1,2,3}$, $\tilde{\mathcal{W}}$. \>
A2.3: $\tilde{\mathcal{W}}_{1,3}$.\\
\>
A1.4: $\tilde{\mathcal{W}}_4$, $\tilde{\mathcal{W}}_{1,2}$, $\tilde{\mathcal{W}}_{1,3}$,
$\tilde{\mathcal{W}}_{2,3}$, $\tilde{\mathcal{W}}_{i,j,k}$\,{\footnotesize{$(i<j<k)$}}, $\tilde{\mathcal{W}}$.\>
A2.4: $\tilde{\mathcal{W}}_{1,3}$.
 \end{tabbing}
Notice that for Table A2 the structures are never in the general $\tilde{\mathcal{W}}$ class
because they always satisfy the condition $\delta\oo=0$ of
$\mathcal{W}_1\oplus\mathcal{W}_2\oplus\mathcal{W}_3$ by
the $\h$-equivariance of the divergence and the module type, but in fact they all satisfy the stronger
condition of $\mathcal{W}_1\oplus\mathcal{W}_3$.
 \end{proof}

 \begin{rk}
The cocalibrated structures of Hervella-Vidal type  
$\mathcal{G}_1=\mathcal{W}_1\oplus\mathcal{W}_3\oplus\mathcal{W}_4$ are those
admitting a Hermitian connection with totally skew-symmetric torsion studied in \cite{AFS,S}. 
Our computation confirms the results of these papers about the GH-type of such structures with
the parallel torsion. 
 \end{rk}

 \section{Investigation of homogeneous models of type II}\label{S5}

The $G_2$-invariant almost complex structures on $\mathbb{S}^6$ and $\mathbb{S}^{2,4}$ (type I) are well-studied \cite{E,G,Ka,K$_2$}, type III structures are described in Appendix - the
corresponding manifolds have simple topology.
In this section we describe the almost complex models of type II
(some examples of these appeared in \cite{S}).
 % In \cite{S} the $SU(2)$ isotropy means our representation $V+\C$ and the $SO(3)$ isotropy means $ad+ad$.

Notice that by Section \ref{S4} none of these possesses a K\"ahler structure invariant with respect to
the corresponding group $G$ of dimension 9. This does not mean that they do not have K\"ahler
structures at all. We shall describe them topologically and see that in some cases such a structure exists,
however it is not $G$-invariant.

 \subsection{Homogeneous models of type II$_1$}\label{S5a}

Structures of type II$_1$ have $\m=V+\C$ as $\h$-representation.
Interpreting $\h=\mathfrak{su}(2)$ as imaginary quaternions
we can identify $V$ with the space of quaternions $\mathbb{H}$ (as the module, not algebra).
In particular, the set of $\h$-invariant complex structures is the standard unit sphere
$\mathbb{S}^2=\{q\in\Im(\mathbb{H}):q\cdot q=-1\}$.

Thus the set of $\h$-invariant (almost) complex structures
on (representation) $\m$ is $\mathcal{J}_e=\mathbb{S}^2\times\{1\}\simeq \mathbb{S}^2$.

\medskip

{\bf II$_1^a$.} The Lie group $SU(3)$ acts transitively on $\mathbb{S}^5$ with
the stabilizer $SU(2)$. Therefore we have the following diffeomorphism:
 \begin{equation*}
M=U(3)/SU(2)=U(1)\times SU(3)/SU(2)\simeq\mathbb{S}^1\times\mathbb{S}^5.
 \end{equation*}
The complex structure $J$ on this manifold is obtained from Hopf fibration
$\mathbb{S}^5\to\C P^2$ with the fiber $\mathbb{S}^1$ and the standard connection
$H\subset T\mathbb{S}^5$, so that $M$ is the $\mathbb{T}^2$ fibration over $\C P^2$.

This is the well-known Calabi-Eckmann complex structure $J_0$: at every point it is the sum of
the standard complex structure on the connection (flat CR-structure) and
a complex structure on $\mathbb{T}^2=\mathbb{S}^1\times\mathbb{S}^1$
(given by 2 parameters, which disappear if we pass to the universal cover).
Clearly, $M$ is not symplectic and so is not K\"ahler.

There is however a 2-parameter family of deformations of this structure to almost complex structure
on the universal cover $\tilde M=\R^1\times\mathbb{S}^5$. Indeed, this is given by the construction of
$(\g,\h,\m,J)$, see Tables in the Appendix. To describe $J$ fix a point, say $a=(0,0,1)\in\mathbb{S}^5$.
The CR-hyperplane $\C^2\simeq H_a\subset T_a\mathbb{S}^5$ as $\h$-representation is isomorphic to $\mathbb{H}$, and so the space of invariant structures is $\mathbb{S}^2$.
We translate this to any other point $b\in\mathbb{S}^5$ by an element of $SU(3)$ and obtain
an invariant CR-structure $J'$ on the connection $H$ (in fact, we have the trivial $\mathbb{S}^2$-bundle
of almost complex structures on the connection $H$ over $\mathbb{S}^5$). This is extended to $J=J'+J''$
by complementing $J'$ with (any) complex shift invariant structure $J''$ on the cylinder $\R^1\times\mathbb{S}^1\simeq\C/\Z$. The space of obtained structures is
$\mathcal{J}_e\simeq\mathbb{S}^2$.

Since normalization of the Lie algebra structure on $\g$ involves complex multiplication
$J_0=i$ on $\m$, there are two preferred complex structures $\pm J_0$ on this sphere.
Moving along the fiber of the Hopf fibration $\mathbb{S}^5\to\C P^2$
rotates the sphere $\mathbb{S}^2$ along the axis through these antipodal points
(the differential of this maps $\Im(\mathbb{H})\ni q\mapsto [q,i]\in T\mathbb{S}^2$).
Only $\pm J_0$ are (integrable) complex structures; for all other choices of $J$,
the Nijenhuis tensor $N_J$ is non-degenerate.

When we compactify $M$ to $\mathbb{S}^1\times\mathbb{S}^5$, we get two more parameters coming
from the torus $\mathbb{T}^2\subset U(3)/SU(2)$ (quotient of the cylinder),
as the space of complex structures on it is given by the fundamental domain
$\Sigma^2=\{|z|\ge1,|\Re(z)|\le \frac12,\Im(z)>0\}\subset\C$.
Thus the space of parameters of
$U(3)$-invariant complex structures on this compact $M^6$ is $\mathcal{J}_+=\mathcal{J}_e\times\Sigma^2$.
The latter deformation does not change integrability: $N_J$ depends only on the
first factor, and so is as described above.

\smallskip

{\bf II$_1^b$.} Similarly, the Lie group $SU(2,1)$ acts transitively on the pseudo-sphere
$N^5=\{z\in\C^3:|z_1|^2+|z_2|^2-|z_3|^2=-1\}$ with the stabilizer $SU(2)$
(inducing the action on the unit ball
$B^4\simeq\mathbb{P}N^5$ with the stabilizer $U(2)$, important for the canonical/flat
CR-structure on $\mathbb{S}^3=\p B^4$). Since $N^5\simeq \mathbb{S}^1\times B^4$, we obtain
the following diffeomorphism:
 $$
M=U(2,1)/SU(2)\simeq U(1)\times SU(2,1)/SU(2)\simeq \mathbb{T}^2\times B^4.
 $$
The invariant complex structure $J$ on this $M$, as well as on the universal cover
$\tilde M=\C\times B^4$, is however not the obvious one, since $(M,J,\oo)$ is not K\"ahler:
the structure $J'$ on the factor $B^4$ has to be $SU(2,1)$-invariant.

Again the space of moduli of all $SU(2)$-invariant structures $J$ on $\tilde M$ is
$\mathcal{J}_e\simeq \mathbb{S}^2$ and on $M$ it is $\mathcal{J}_+=\mathcal{J}_e\times\Sigma^2$.
Indeed, at every point $a\in N^5$ the natural contact space $H_a\subset T_aN^5$ is isomorphic
to $\mathbb{H}$ as $\h$-module, and the space of invariant complex structures on $H$ is the trivial
$\mathbb{S}^2$-bundle over $N^5$. This gives the complex structure $J'$, and
the shift invariant complex structure $J''$ on the factor $\mathbb{T}^2$ resp.\,$\C$
yields the structure $J=J'+J''$ on $M$ resp.\, $\tilde M$. The integrable structures among these $J$
are $J_0=\pm i$ only, for others the tensor $N_J$ is non-degenerate.

Concerning the integrability properties for the structures from $\mathcal{J}_e$
the following statement describes the generalized SKT property.

 \begin{prop}
For the compatible invariant almost symplectic form $\oo$ and the operator $d_J=d\circ J$ we have:
$d_J^2\oo\ne0$ always; $d_J^3\oo\ne0$ unless $J=J_0=\pm i$ or
$J\perp J_0\Leftrightarrow J=\cos t\cdot j+\sin t\cdot k$, $t\in\R$ (in these cases $d_J^3\oo$ vanishes);
$d_J^4\oo=0$ for any other parameter value.
 \end{prop}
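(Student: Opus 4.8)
The plan is to pass from $M=G/H$ to pure linear algebra on $\m$ and then read off the vanishing loci in $q\in\mathbb{S}^2$. Since $(M,J,\oo)$ is $G$-invariant and both $d$ and the operator $\z\mapsto J\circ\z=\z(J\,\cdot,J\,\cdot,\dots)$ preserve $G$-invariance, every form $d_J^k\oo$ is invariant and so is determined by its value at $o$, i.e.\ by an $\h$-invariant element of $\La^\bullet\m^*$. On such forms the de~Rham differential is the Chevalley--Eilenberg differential built from the $\m$-component of the bracket: for $\a\in(\La^p\m^*)^\h$ and $X_i\in\m$,
\[
(d\a)(X_0,\dots,X_p)=\sum_{i<j}(-1)^{i+j}\a\bigl(\pi[X_i,X_j],X_0,\dots,\hat X_i,\dots,\hat X_j,\dots,X_p\bigr),
\]
the $\h$-part contributions dropping out by $\h$-invariance of $\a$; here $\pi:\g\to\m$ is the projection along $\h$. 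In the type~II$_1$ case $\m$ is \emph{not} a subalgebra, so $\pi$ is essential, and the relevant brackets are those of case A1.4. Finally, since $\oo$ is $J$-compatible one has $J\circ\oo=\oo$, so the tower begins with $d_J\oo=d\oo$.

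Next I would fix the explicit data. Take the real basis $e_1,\dots,e_4$ of $V=\H$ given by $1,i,j,k$ and $e_5=\ee,e_6=i\ee$ of $\C$. From A1.4 the $\m$-brackets are $\pi[x,y]=\oo_i(x,y)\,\ee$ for $x,y\in V$, $\pi[x,\ee]=3\l\,xi$ and $\pi[x,i\ee]=\pi[\ee,i\ee]=0$ (with $\l=\pm1$), the term $\l\,\Im(x\bar y)$ living in $\h$. The invariant complex structure is $J(x,\eta)=(xq,i\eta)$ with $q=q_1i+q_2j+q_3k$, $q_1^2+q_2^2+q_3^2=1$, and the compatible invariant form is $\oo=a\,\oo_V+b\,\oo_\C$, where $\oo_V(\x,\eta)=\Re(\x q\bar\eta)$ (compatibility forces its defining quaternion to be a multiple of $q$) and $\oo_\C$ is the area form on $\C$. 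I would then compute $d\oo$, $d_J^2\oo=d\,J\,d\oo$, $d_J^3\oo$ and $d_J^4\oo$ iteratively, each being an invariant form with coefficients polynomial in $(q_1,q_2,q_3)$ and linear in $(a,b)$.

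The three assertions are extracted with the help of Schur-type dimension counts that keep the target spaces small: $\dim(\La^4\m^*)^\h=4$, $\dim(\La^5\m^*)^\h=2$ and $\dim(\La^6\m^*)^\h=1$. The top case is structural and needs no computation: $J\,d_J^3\oo$ is an invariant $5$-form, and every invariant $5$-form on these spaces is closed because the space is unimodular, $\op{tr}(\pi\circ\op{ad}_X|_\m)=0$ for all $X\in\m$ (the groups $U(3),U(2,1)$ are reductive, hence unimodular, and $H$ is compact); concretely $d(\iota_X\nu)=\op{tr}(\pi\op{ad}_X|_\m)\,\nu$ for the invariant volume $\nu$, so the two invariant $5$-forms $\iota_\ee\nu,\iota_{i\ee}\nu$ are closed and thus $d_J^4\oo=0$. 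For $d_J^2\oo$ I expect to exhibit one of its four components that is a nowhere-vanishing (e.g.\ constant or manifestly positive) expression on $\mathbb{S}^2$, giving $d_J^2\oo\ne0$ identically. For $d_J^3\oo$ the two components should both be proportional to $q_1(q_2^2+q_3^2)=q_1(1-q_1^2)$ times a factor that is nonzero on $\mathbb{S}^2$; this polynomial vanishes precisely when $q_1=0$ (i.e.\ $q\perp i$, $q=\cos t\,j+\sin t\,k$) or $q_2=q_3=0$ (i.e.\ $q=\pm i=\pm J_0$), which is exactly the claimed locus.

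The main obstacle is the delicate part of the last step: one must check that the obstruction controlling $d_J^3\oo$ factors \emph{exactly} into the two geometric loci $\{q\perp i\}\cup\{q=\pm i\}$, i.e.\ that the residual cofactor is genuinely nowhere zero on $\mathbb{S}^2$ (so no spurious zeros appear) and that both invariant components share this zero set (so the $5$-form vanishes only where stated). The symbolic expansion of $d_J^3\oo$ over the $q$-dependent data is heavy and I would carry it out in Maple, as elsewhere in the paper; a final routine verification confirms that the conclusions are independent of the scaling parameters $a,b$ and of the sign $\l=\pm1$.
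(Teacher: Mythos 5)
Your plan is correct and coincides with what the paper actually does: the paper offers no written argument for this Proposition beyond the general remarks that the differential of invariant forms is computed ``by the Cartan formula using only the brackets on $\m$'' and that such verifications were carried out in Maple, which is precisely your reduction to the Chevalley--Eilenberg complex on $(\La^\bullet\m^*)^\h$ with the A1.4 bracket data (correctly keeping the projection $\pi$, since $\m$ is not a subalgebra, and correctly starting from $d_J\oo=d\oo$). Your structural unimodularity argument for $d_J^4\oo=0$ and the dimension counts $\dim(\La^4\m^*)^\h=4$, $\dim(\La^5\m^*)^\h=2$ are consistent with the data and are a sensible way to organize the symbolic computation, whose completion is exactly where the paper also defers to machine calculation.
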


Thus we obtain that not only the poles, but also the equator between them in
$\mathcal{J}_e=\mathbb{S}^2$ consists of distinguished almost complex structures.

 \subsection{Homogeneous models of type II$_2$}\label{S5b}

Structures of type II$_2$ also have $\m=V+\C$ as $\h$-representation.
For $\h=\mathfrak{su}(1,1)$, considered as imaginary split quaternions, we identify $V$ with
the module of split quaternions $\mathbb{H}_s$. The set of $\h$-invariant complex structures
$J_q(v)=v\cdot q$, $q\in\Im(\mathbb{H}_s)$ is the two sheet hyperboloid
$Z^2=\{q\in\Im(\mathbb{H}_s):q\cdot q=-1\}$.
Thus the set of $\h$-invariant (almost) complex structures on $V+\C$ is
$\mathcal{J}_h=Z^2\times\{1\}\simeq \mathbb{D}^2\times\Z_2$.

Notice also that on $V$ there are (almost) product structures $I_q(v)=v\cdot q$
forming the set $Y^2=\{q\in\Im(\mathbb{H}_s):q\cdot q=+1\}$, which is the
one sheet hyperboloid homeomorphic to the cylinder $\mathbb{S}^1\times\R^1$.

\smallskip

{\bf II$_2^a$.} The Lie group $SU(2,1)$ acts transitively on the unit pseudo-sphere in
$\C^3\simeq\R^{4+2}$ with the metric of the signature $(4,2)$:
$Q^5=\{(z_1,z_2,z_3)\in\C^3:|z_1|^2+|z_2|^2-|z_3|^2=1\}\simeq\mathbb{S}^3\times\C$.
Thus
 $$
M=U(2,1)/SU(1,1)=U(1)\times SU(2,1)/SU(1,1)\simeq \mathbb{S}^1\times\mathbb{S}^3\times\C.
 $$
To see the invariant complex structure, notice that $\mathbb{S}^1$ acts on $Q^5$, $w\mapsto e^{it}w$,
and the quotient is
 $$
Q^5/\mathbb{S}^1=\C^2\#\overline{\C P^2}\simeq\C P^2\setminus\{pt\}\simeq\C P^1\times\C;
 $$
the structure $J$ is obtained similarly to case 1 above.

Namely, the CR-connection $H\subset TQ$ carries the space $\mathcal{J}_h$ of $G$-invariant
almost complex structures $J'$, which form a trivial 2D bundle over $Q$.
Complementing this with any shift invariant complex structure $J''$ on the torus $\mathbb{T}^2$,
which is the fiber of the discussed map $M^6\to\C P^1\times\C$,
we get the almost complex structure $J=J'+J''$ on $M=U(2,1)/SU(1,1)$.
The moduli space of these structures is $\mathcal{J}_+=\mathcal{J}_h\times\Sigma^2$.

If we consider the universal cover $\hat M=\R^1\times\mathbb{S}^3\times\C$, then the
above torus becomes the cylinder $\R^1\times\mathbb{S}^1$ and (since on the cylinder
all complex structures are equivalent) the moduli space becomes not 4- but 2-dimesional,
namely $\mathcal{J}_h$.

Again there are two preferred structures, corresponding to $J'=\pm i$. Indeed, moving
along the orbit $\mathbb{S}^1$ of $U(1)$-action the space
$\mathcal{J}_h\simeq\mathbb{D}^2\times\Z_2$ rotates around its axis of symmetry through $\pm i$.
Only these two corresponding structures $J$ are integrable,
the others have non-degenerate Nijenhuis tensor $N_J$.

\smallskip

{\bf II$_2^b$.} Finally, let us discuss
 $$
M=GL(3)/SU(1,1)=GL(3)/SL(2).
 $$
It has two connected components, each being simply-connected.

We can identify $M$ with the space  $\{(v,\Pi^2,\oo)\}$, where $v\in\R^3$ is a nonzero vector,
$\Pi^2\not\ni v$ a transversal 2-plane and $\oo\in\La^2\Pi^*\setminus0$ an area form on it.
Indeed, $GL(3)$ acts transitively on this space with the stabilizer $SL(2)$. Furthermore, we
have
 $$
M^6=\R_*\times N^5,\text{ where }N^5=SL(3)/SL(2),
 $$
and $SL(2)$ is embedded into $SL(3)$ as the lower $2\times2$ block.

We identify $N^5=\{(v,p)\in\R^3\times(\R^3)^*:\langle v,p\rangle=1\}$ by choosing covector
$p$ in the annihilator of $\Pi$. At $a=(v,p)\in N$ the stabilizer $SL(2)$
acts on $T_aN=\{(w,q):\langle w,p\rangle+\langle v,q\rangle=0\}$.
This has the invariant subspace $H_a=H_a^1\oplus H_a^2$, where
$H_a^1=\{(w,0):\langle w,p\rangle=0\}$ and $H_a^2=\{(0,q):\langle v,q\rangle=0\}$.

We have: $H_a^2=(H_a^1)^*$. The $SL(2)$-invariant area form $\oo$ on $H_a^1$ is obtained
as $\iota_v\Omega$, where $\Omega$ is the volume form in $\R^3$. The invariant almost complex
structure is now given by $J_{rt}(\xi)=r\,\xi+t\,\hat\xi$, where $\xi\in H_a^1$ and
$\hat\xi=\iota_\xi\oo\in H_a^2$, $r,t\in\R$. Requirement $J_{rt}^2=-1$ implies
$J_{rt}(\hat\xi)=-\frac{1+r^2}{t}\xi-r\,\hat\xi$.
Complementing this by requirement that $J_{rt}$ maps the complement $\{(\l v,-\l p)\}\subset T_aN^5$
to $T_a\R_*$ we obtain the almost complex structure on $M$ parametrized by the pair $(r,t)$
or equivalently by the space $\mathcal{J}_h$.
Direct calculation shows that none of these structures $J_{rt}$ is integrable.
 % (but there is a preferred integrable almost product structure here instead)

The integrability properties generalizing SKT for the structures from $\mathcal{J}_h$
are given by the following statement.

 \begin{prop}
For the compatible invariant almost symplectic form $\oo$ and the operator $d_J=d\circ J$ we have:
$d_J^2\oo\ne0$ always; $d_J^3\oo\ne0$ unless $J=J_0=\pm i$
in the case $G=U(2,1)$ ($p=i$ in the case A3.5)
or $J=\cosh t\cdot i+\sinh t\cdot k$, $t\in\R$ in the case $G=GL(3)$
($p=j$ in the case A3.5; in terms of the parameters of the Table this means $q=\a\,i+\b\,k$, $\a^2-\b^2=1$)
-- only in these cases $d_J^3\oo$ vanishes; and $d_J^4\oo=0$ for any other parameter value.
 \end{prop}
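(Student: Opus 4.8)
The plan is to reduce the whole statement to a finite computation in the Chevalley--Eilenberg complex of the reductive pair $(\g,\h)$, and then to solve the resulting polynomial conditions on the parameter $q$ of $J$. Since $\oo$, $J$ and hence every $d_J^k\oo$ are $G$-invariant, they are determined by $H$-invariant tensors on $\m$, and the exterior derivative of an invariant form is computed algebraically by
$$d\z(X_0,\ldots,X_k)=\sum_{i<j}(-1)^{i+j}\z(\pi[X_i,X_j],X_0,\ldots,\widehat{X_i},\ldots,\widehat{X_j},\ldots,X_k),$$
where $\pi:\g\to\m$ is the projection along $\h$ and $[\,\cdot\,,\,\cdot\,]$ is the bracket of $\g$. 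The characteristic feature of type II is that $\m$ is not a subalgebra, so $\pi[X,Y]$ genuinely differs from $[X,Y]$: the $\h$- and $\C$-valued parts of $[V,V]$ feed into $\pi$. First I would fix the explicit structure equations of case A3.5 from the Appendix, a $J$-adapted real coframe $\theta_1,\ldots,\theta_6$ of $\m^*=(V\oplus\C)^*$ with $V=\H_s$, and record the $d\theta_i\in\Lambda^2\m^*$ once and for all.

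Next I would write down the compatible invariant form $\oo$ (Case~3 of Section~\ref{S4}) and the action of $J$, which is parametrized by $q\in\Im(\H_s)$, $q^2=-1$, ranging over the two-sheet hyperboloid $Z^2$. A convenient simplification is that $\oo$ is of type $(1,1)$, so $J\oo=\oo$ and hence $d_J\oo=d\oo$: the first application of $J$ is free, and thereafter $d_J^2\oo=d(J\,d\oo)$, $d_J^3\oo=d(J\,d(J\,d\oo))$, $d_J^4\oo=d(J\,d_J^3\oo)$, with $J$ acting on a $k$-form by $\z\mapsto\z(J\cdot,\ldots,J\cdot)$. Expanding each form in the coframe turns the coefficients of $d_J^2\oo$, $d_J^3\oo$, $d_J^4\oo$ into explicit polynomials in the components of $q$ (subject to $q^2=-1$) and in the table parameters; this is the same kind of finite linear-algebra computation done in Maple elsewhere in the paper.

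From here the three claims are read off. For $d_J^2\oo$ (a $4$-form) I would exhibit a single coefficient whose polynomial does not vanish anywhere on $Z^2$, giving $d_J^2\oo\neq0$ always. For $d_J^4\oo$ there is a conceptual shortcut: $J\,d_J^3\oo$ is an invariant $5$-form, every invariant $5$-form is $\iota_X\mu$ for the invariant volume $\mu$, and the relative Koszul formula gives $d(\iota_X\mu)=\op{tr}(\pi\circ\op{ad}_X|_\m)\,\mu$ for $X\in\m$; since right multiplication by an imaginary (split) quaternion is traceless, $\C$ is abelian, and $\pi[V,V]\subset\C$ is off-diagonal, all these traces vanish, so $\m$ is relatively unimodular and $d$ annihilates every invariant $5$-form. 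Hence $d_J^4\oo=0$ unconditionally (the same degeneration observed for $\mathbb{S}^6$ in Section~\ref{S4}). The substantive point is $d_J^3\oo$ (a $5$-form): its vanishing is a polynomial system in $q$, and intersecting its zero locus with $q^2=-1$ yields the stated sets. I expect the two cases $G=U(2,1)$ and $G=GL(3)$ to enter precisely through the causal type of the table element $p$ --- timelike $p=i$ ($p^2=-1$) versus spacelike $p=j$ ($p^2=+1$) --- which fixes the signature of the quadratic cut out by $d_J^3\oo=0$: a definite form forces $q=\pm i$ (the two vertices), whereas an indefinite form produces the hyperbola $q=\cosh t\cdot i+\sinh t\cdot k$, i.e. $q=\a i+\b k$ with $\a^2-\b^2=1$.

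The main obstacle is bookkeeping rather than conceptual. One must keep the $\h$- and $\C$-valued part of $\pi[V,V]$ exactly right (this is where type II differs from type III and where an error would silently destroy $d^2=0$), carry the dependence on $q$ correctly through three successive applications of $d\circ J$, and treat the constraint $q^2=-1$ so that ``generic nonvanishing'' and the exact vanishing loci are asserted on the hyperboloid rather than on all of $\Im(\H_s)$. A good internal check is consistency with the compact analogue (the type II$_1$ proposition): under the split-to-compact degeneration the indefinite hyperbola should collapse to the equatorial circle $J\perp J_0$, and reproducing this limit guards against sign errors in the split-quaternion multiplication.
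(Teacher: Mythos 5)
Your plan is essentially the paper's own (implicit) proof: the proposition is stated as the outcome of a direct symbolic computation, namely expressing $\oo$ and $J_q$ in a coframe adapted to the structure equations of case A3.5 and iterating the Koszul formula for invariant forms, with the $\h$-valued component of $[V,V]$ projected out by $\pi$; the authors defer the bookkeeping to Maple exactly as you propose. Two remarks. First, your unimodularity argument for $d_J^4\oo=0$ is a genuine (and welcome) shortcut over brute force: invariant $5$-forms are $\iota_X\mu$ with $X$ in the trivial isotypic component $\C\subset\m$, and in A3.5 one has $\pi\circ\op{ad}_\ee|_V=R_{-3\epsilon p}$ with zero trace (right multiplication by an imaginary split quaternion) and $\op{ad}_{i\ee}=0$, so every invariant $5$-form is closed and the top identity holds for all $q$ simultaneously. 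Second, your heuristic for the shape of the $d_J^3\oo=0$ locus is right in outcome but slightly off in mechanism: the computation yields, uniformly in $p$, the set $\{q=\pm p\}\cup\bigl(p^{\perp}\cap Z^2\bigr)$ --- the exact analogue of the poles-plus-equator answer in the II$_1$ proposition --- and what the causal type of $p$ governs is which piece actually meets the hyperboloid $q^2=-1$: for $p=i$ the plane $p^{\perp}=\langle j,k\rangle$ consists of product structures ($q^2=+1$) and misses $Z^2$, leaving only $q=\pm i$, whereas for $p=j$ the poles are absent (since $j^2=+1$) and $p^{\perp}\cap Z^2$ is the stated hyperbola $\a^2-\b^2=1$. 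This matches the paper's remark immediately following the proposition and is a sharper consistency check against the compact case than the signature-of-a-quadratic picture. What remains --- the pointwise nonvanishing of $d_J^2\oo$ on $Z^2$ and the exact solution of $d_J^3\oo=0$ --- is, here as in the paper, a finite verification that your setup correctly reduces to; you have not carried it out, but the reduction is sound.
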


Notice that for $H=SU(1,1)$ there are less invariant structures $(J,\oo)$
with $d_J^3\oo=0$ than for $H=SU(2)$.
Actually, for $G=U(2,1)$ the additional plane
$\{q\in\Im(\H_s):q\perp i\}=\langle j,k\rangle$
consists of the product structures $q^2=1$;
for $G=GL(3)$ and $p=j$ (we refer to the case A3.5 in Table A3) the plane
$\{q\in\Im(\H_s):q\perp j\}=\langle i,k\rangle$ contains the indicated
two-component curve of complex structures $q^2=-1$.

 \section{Concluding remarks}

In this paper we classified special homogeneous almost complex structures in dimension 6.
Most of them turn out to be the left-invariant structures on Lie groups and,
with some notable exceptions, most of these are solvable. Passing to quotient nil- or solv-manifolds
destroys the isotropy, so our classification easily implies

 \begin{theorem}
The only compact homogeneous almost complex manifolds in dimension 6 with semi-simple (nontrivial) isotropy are
$\mathbb{S}^6$, $\mathbb{S}^1\times\mathbb{S}^5$ and $\mathbb{S}^3\times\mathbb{S}^3$
and their finite quotients, equipped with the almost complex structures as described in Theorem \ref{Thm1}.
 \end{theorem}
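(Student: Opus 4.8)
The plan is to read this off from the classification already established, by testing each item of Theorem \ref{Thm1} (and its admissible quotients) for compactness. Any compact homogeneous almost complex $M^6=G/H$ with semi-simple nontrivial isotropy is, up to covering and discrete quotient, one of the structures of types I, II or III; so it suffices to decide which of these models, and which of the discrete quotients allowed by Theorem \ref{Thm1}, are compact. The types I and II contribute only manifolds whose diffeomorphism type is explicitly known, while type III contributes $6$-dimensional Lie groups, and for the latter compactness is governed by the algebraic type of $\m$.

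For types I and II I would simply quote the diffeomorphism types from Theorem \ref{Thm1} and Section \ref{S5}. The type I models are $\mathbb{S}^6=G_2^c/SU(3)$, which is compact, and $\mathbb{S}^{2,4}\simeq\mathbb{S}^2\times\R^4$, which is not. Among the type~II$_1$ models, $U(3)/SU(2)\simeq\mathbb{S}^1\times\mathbb{S}^5$ is compact while $U(2,1)/SU(2)\simeq\mathbb{T}^2\times B^4$ is non-compact because of the open-ball factor; both type~II$_2$ models $U(2,1)/SU(1,1)\simeq\mathbb{S}^1\times\mathbb{S}^3\times\C$ and $GL(3)/SU(1,1)\simeq\R_*\times SL(3)/SL(2)$ carry a Euclidean factor and are non-compact. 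Hence among types I and II only $\mathbb{S}^6$ and $\mathbb{S}^1\times\mathbb{S}^5$ survive.

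For type III the manifold $M$ is the $6$-dimensional Lie group with Lie algebra $\m$ carrying a left-invariant $J$, so I would run through Tables A1--A6 and record the isomorphism type of $\m$. A connected $6$-dimensional Lie group is compact only when $\m$ is of compact type (a sum of an abelian and a compact semi-simple ideal) and the group is its compact form. Inspecting the tables, every $\m$ is either solvable (the Heisenberg- and abelian-extension cases A1.1--A1.3, A2.4 and their $\mathfrak{su}(1,1)$-analogues in A3, the $\mathfrak{sl}_2(\C)$-cases with $\m=V+\C$, and the flat cases) or contains a non-compact simple summand or a Euclidean radical ($\mathfrak{sl}_2(\C)$ in A2.2 and in the adjoint $\mathfrak{sl}_2(\C)$-case, $\mathfrak{su}(1,1)\simeq\mathfrak{sl}_2(\R)$ in A4.1, or the $\R^3$-factor in A2.3), so the group is non-compact in all of these. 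The single exception is case A2.1, where $\m\simeq\mathfrak{su}(2)\oplus\mathfrak{su}(2)$ and $M=SU(2)^3/SU(2)_{\text{diag}}\simeq SU(2)\times SU(2)=\mathbb{S}^3\times\mathbb{S}^3$ is compact. Thus type III contributes only $\mathbb{S}^3\times\mathbb{S}^3$.

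The genuinely delicate point, which I expect to be the main obstacle, is the role of discrete quotients, since Theorem \ref{Thm1} classifies the structures only up to covering and quotient by a discrete central subgroup. Finite quotients are harmless: they preserve compactness and non-compactness, and a nontrivial finite group cannot act freely on a contractible model (the simply connected solvable groups underlying the type III solv-cases are diffeomorphic to $\R^6$), so they never compactify a non-compact model. The real temptation is to compactify a non-compact solvable type III group $M_{\mathrm{grp}}$ by a lattice quotient $\Gamma\backslash M_{\mathrm{grp}}$, producing a compact nil- or solv-manifold onto which the left-invariant $J$ descends. I would rule this out by observing that such a quotient destroys the isotropy: the semi-simple $H$ acts on $M_{\mathrm{grp}}$ by automorphisms through $\op{Ad}$, but $\op{Ad}(H)$ does not normalize a generic lattice $\Gamma$, so $H$ does not descend and the induced structure on $\Gamma\backslash M_{\mathrm{grp}}$ has at most discrete isotropy. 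Such a compact quotient therefore violates the hypothesis of semi-simple (in particular positive-dimensional) isotropy and is not admissible here. Consequently the only compact homogeneous almost complex $6$-manifolds with semi-simple nontrivial isotropy are $\mathbb{S}^6$, $\mathbb{S}^1\times\mathbb{S}^5$, $\mathbb{S}^3\times\mathbb{S}^3$ and their finite quotients.
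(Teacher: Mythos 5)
Your proposal is correct and follows essentially the same route as the paper, which derives this theorem directly from the classification with the one-line justification that most type III models are solvable Lie groups and that ``passing to quotient nil- or solv-manifolds destroys the isotropy''; your case-by-case compactness check and your argument that a connected semi-simple $H$ cannot preserve a cocompact lattice (so the isotropy cannot survive such a quotient) is exactly the reasoning the authors leave implicit.
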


 \begin{rk}
The possible quotients, that carry general parameter almost complex structures $J$ from Theorem \ref{Thm1}, are
$\mathbb{S}^1\times(\mathbb{S}^5/\Z_3)$,
$\mathbb{S}^1\tilde\times(\mathbb{S}^5/\Z_3)=(\mathbb{S}^1\times\mathbb{S}^5)/\Z_3$,
$\mathbb{RP}^3\times\mathbb{S}^3$ and $\mathbb{RP}^3\times\mathbb{RP}^3$. For some exceptional values
of parameters there are further quotients, e.g. the Calabi-Eckmann structure descends onto the
direct product %of the circle and the lens space
$\mathbb{S}^1\times(\mathbb{S}^5/\Z_n)$ for any $n$.
 \end{rk}

Some items from our list easily generalize to higher dimensions, providing examples of non-integrable
almost complex structures (with non-degenerate Nijenhuis tensor $N_J$) with abundant symmetries.
Previously some invariant almost complex structure appeared in \cite{Wo,WG}. Here are some new examples.

For every Lie algebra $\h$ and a complex vector space $(V,i)$ consider the space $\m=\h\otimes V$
with the $\h$-invariant almost complex structure given by $J(h\otimes v)=h\otimes(i\cdot v)$. Fixing a
commutative associative bilinear multiplication $Q:S^2V\to V$ we define the bracket by the formula
 $$
[h\otimes v,h'\otimes v']=[h,h']_\h\otimes Q(v,v').
 $$
With this $\m$ and $\g=\h\ltimes\m$ become Lie algebras, and the corresponding Lie group $M=G/H$
carries $G$-invariant almost complex structure $J$.

All 4 structures in Table A2 for $\h=\mathfrak{su}(2)$ resp.\ Table A4 for $\h=\mathfrak{su}(1,1)$ are
of this type with $V=\R^2$ (more precisely it corresponds to a semi-group structure on the set of
two points). This family of generalized gradings contains, in particular, the 2-step graded nilpotent algebra
$\h_1\oplus\h_2$, $[h_1,h_1']=[h,h']_2$ (and other brackets vanish). The corresponding Lie group
$M=\exp(\m)$ carries the left-invariant structure $J$ with non-degenerate Nijenhuis tensor
in the sense $N_J(\Lambda^2TM)=TM$.

Another example is the "right extension" $M^{4n+2}$ of the Heisenberg group over the quaternion space
$\H^n$. As usual we describe the corresponding Lie algebra. Let $I,J,K$ be the triple of $\op{Sp}(n)$-invariant complex structures on $\H^n$,
and $\oo_q(x,y)=g(xq,y)$ be the symplectic structure parametrized by
$q\in \mathbb{S}^2\subset\R^3(I,J,K)$, where $g(x,y)=x^t\cdot\bar y$ is the standard Hermitian structure,
$x,y\in\H^n$. Then with fixed $\ee\in\C$ we have the following brackets on $\m$ ($\varepsilon\in\{0,1\}$):
 $$
[x,y]=\oo_I(x,y)\ee,\quad [x,i\ee]=x(\tfrac12\varepsilon+\beta I),\quad [\ee,i\ee]=\varepsilon\ee.
 $$
Notice that $\H^n\oplus\langle\ee\rangle$ is the Heisenberg algebra and $\m$ its extension by derivations.
The almost complex structure $J_q(x,\ee)=(xq,i\ee)$ on $M$ is invariant with respect to the Lie group
$G=\op{Sp}(n)\ltimes \op{exp}(\m)$. The "twistor space" of this $M$
(= the bundle of moduli of the invariant almost complex structures) is the $4(n+1)$-dimensional
manifold $M^\dagger=M\times \mathbb{S}^2$ equipped with the canonical almost complex structure
 $$
J_\dagger(x,\ee,\xi)=(xq,i\ee,\xi q),\quad \xi\in T_q\mathbb{S}^2.
 $$
This structure $J_\dagger$ is non-integrable (its Nijenhuis tensor is encoded by $N_J$), but has
some natural pseudoholomorphic foliations.

Finally let us notice that for type II spaces from Theorem \ref{Thm1} the set of invariant
almost complex structures $\mathcal{J}$ is two-dimensional (if we consider simply connected model $M$):
it is parametrized by $\mathbb{S}^2$ for $\h=\mathfrak{su}(2)$
and by $\Z_2\times \mathbb{D}^2$ for $\h=\mathfrak{su}(1,1)$. The space of all these structures --- the
8-dimensional "twistor space" $M^6\times\mathcal{J}$ ---
has a natural almost complex structure, but its Nijenhuis tensor is nonzero.
For type II$_1$ spaces on $M=U(3)/SU(2)$ or $M=U(2,1)/SU(2)$ (where the space of almost complex
structures is given by 4 parameters) the corresponding "twistor space"
$M^6\times\mathcal{J}\times\Sigma^2$ is 10-dimensional and it also possesses the natural
non-integrable almost complex structure.

\vskip1cm

%\newpage

\appendix
 \section{Tables of the structure of $(\g,\h,\m,J,\omega)$}

Here we list the data for Theorems \ref{Thm1}, \ref{Thm3} and indicate some integrability properties
for the structures.
These completely encode all non-flat homogeneous almost complex manifolds with semi-simple
isotropy, only excluding the special cases $G_2^c/SU(3)=S^6$ and $G_2^*/SU(2,1)=S^{2,4}$.

\smallskip

Imaginary quaternions $\H$ are generated by $i,j,k=ij$, which anti-commute and satisfy $i^2=-1,j^2=-1$
(and hence $k^2=-1$). In tables A1,\,A2 we identify $\h=\Im(\H)$.

Imaginary split quaternions $\H_s$ are generated by $i,j,k=ij$, which anti-commute and satisfy $i^2=-1,j^2=1$
(and hence $k^2=1$). In tables A3,\,A4 we identify $\h=\Im(\H_s)$.

The brackets $[\h,\h]$ and $[\h,\m]$ are straightforward and are not included into the tables.
We include only the non-trivial brackets $[\m,\m]$.

The almost complex structure $J$ is indicated in terms of its 2 parameters for tables A1-A4
(beware that the parameter $r$ has different meaning in different tables).
$J$ has no parameters in tables A5-A6.
In the case of representation $\mathfrak{ad}^\C$ we use formula (\ref{J}) for $J$.

We list only necessary values of the Nijenhuis tensor (to minimize the tables),
the others can be restored by the rule $N_J(Jx,y)=N_J(x,Jy)=-JN_J(x,y)$;
we omit the trivial entries.

We write NDG to indicate that the tensor $N_J$ is non-degenerate;
DG$_2$ indicates that the image of $N_J:\La^2_\C TM\to TM$
is a real rank 2 subdistribution of $TM$, DG$_1$ means it is a real rank 4
subdistribution (in DG$_k$ the number $k$ is the complex codimension of $\op{Im}(N_J)$,
see \cite{K$_1$}).

The Hermitian metric is given by $g(\xi,\eta)=\oo(\xi,J\eta)$ as in Section \ref{S4}.
Thus we describe only the almost symplectic form $\oo$ and its differential.

Recall that $V$ stands for the standard representation, $\C$ for the trivial
complex and $\mathfrak{ad}$ for the adjoint representation.

\newpage

\begin{center}
{\bf Table A1:} isotropy $\mathfrak{h}=\mathfrak{su}(2)$, representation $V+\C$:
\end{center}

\hskip-2.3cm \begin{tabular}[t]{| l | l |}
    \hline
 $\mathfrak{m}$ as $\h$-representation & $\mathfrak{m}=V\oplus\C=\H\oplus \C$.
 $\mathfrak{h}$ acts from the left, $\mathfrak{ad}_{\Im\H}\subset \text{End}_\mathbb{R}(\H)$ \\
   \hline
 vectors \& parameters & $\ee\in\C$ is fixed. $x,y\in\H$ are arbitrary. $q,p,b\in\Im(\H)$,
 $\a,\b,r,\e\in\R$\\
   \hline
 structure $J$ on $\m$ & $J=(q,i)$, $q^2=-1$. $q$ acts from the right: $J(x,\ee)=(xq,i\ee)$
 \vphantom{$A^{\frac12}$}\\
   \hline
 structure $\omega$ on $\m$ & $\omega=\omega_\H+\omega_\C$, \ $\omega_\H(x,y)=\Re(x b\hs\bar y)$, \ $\omega_\C(\ee,i\ee)=1$ ($b\neq0$) \\
   \hline
 compatibility of $J$ and $\oo$ \hspace{8.5pt} & $b\in\mathbb{R}q$\\
   \hline
\end{tabular}

\vspace{1.5pt}

\hskip-3.5cm\begin{tabular}[t]{ l | l | l | l |}
   \cline{2-4}
 & Lie algebra structure of $\g$ & tensor $N_J$ \& condition for $d\omega=0$& notes
 \vphantom{$A^{\frac12}$} \\
   \cline{2-4}
A1.1 & $[x,y]=\a\Re(xi\bar y)\ee$ & \!$N_J(x,y)=\!\a(\Re(xq[q,i]\bar y)-i\Re(x[q,i]\bar y))\ee$\!  &
 $\varepsilon\in\{0,1\}$, $\a\neq0$ \vphantom{$A^{\frac12}$} \\
 & $[x,i\ee]=x(\frac{\varepsilon}{2}+ri)$& $N_J(x,\ee)=rx[q,i]$ & NDG unless $r=0$ \\
 & $[\ee,i\ee]=\varepsilon\ee$ & $d\omega=0$ iff $\varepsilon=1,b=\a i$ & \hspace{47pt} or $q=\pm i$ \\
   \cline{2-4}
A1.2 & $[x,y]=$ & $N_J(x,y)=\Re(x(q[q,i]+[q,p])\bar y)\ee+$ & DG$_2$ unless
 \vphantom{$A^{\frac12}$} \\
 & \hphantom{AA}$=(\Re(xi\bar y)+i\Re(xp\bar y))\ee$ & \quad\ $+\Re(x(q[q,p]-[q,i])\bar y)i\ee$,
 \qquad $d\omega\not=0$ & \hphantom{AAAA} $[p,q]=q[q,i]$ \\
   \cline{2-4}
A1.3 & $[x,\ee]=\alpha x$ & $N_J(x,\ee)=rx[q,i]$ & $(\varepsilon,\alpha)\in\{(0,0),$
 \vphantom{$A^{\frac12}$} \\
 & $[x,i\ee]=x(\beta+ri)$ & $d\omega=0$ iff $\alpha=\beta=0, b\in\mathbb{R}i$ &
 \qquad\qquad $(0,1),(1,0)\}$ \\
 & $[\ee,i\ee]=\varepsilon\ee$ & & DG$_1$ or NDG \\
   \cline{2-4}
A1.4 & $[x,y]=\Re(xi\bar y)\ee+\varepsilon\cdot\mathfrak{ad}_{\Im(x\bar y)}$ &
 $N_J(x,y)=(\Re(xq[q,i]\bar y)-i\Re(x[q,i]\bar y))\ee$ & \hphantom{AAA}$\varepsilon=\pm1$
 \vphantom{$A^{\frac12}$} \\
 & $[x,\ee]=3\hs\varepsilon x i$ & $N_J(x,\ee)=3\hs\varepsilon xq[q,i]$, &
 \ $\varepsilon=-1\Rightarrow\mathfrak{g}=\mathfrak{u}(3)$ \\
 & & $d\omega\not=0$ & \ $\varepsilon=1\Rightarrow\mathfrak{g}=\mathfrak{u}(2,1)$ \\
 & & & NDG unless $q=\pm i$ \\
   \cline{2-4}
\end{tabular}

\bigskip

\begin{center}
{\bf Table A2:} isotropy $\mathfrak{h}=\mathfrak{su}(2)$, representation $\mathfrak{ad}^\C$:
\end{center}

\hskip-2.3cm\begin{tabular}[t]{| l | l |}
    \hline
 $\mathfrak{m}$ as $\h$-representation & $\mathfrak{m}=\mathfrak{ad}\oplus\mathfrak{ad}=\mathfrak{ad}^\C$,
 the complex adjoint representation of $\mathfrak{h}$ \vphantom{$A^{\frac12}$} \\
    \hline
 vectors \& parameters & $x,y\in\m$. If $\m=\m_a\oplus\m_b$, we decompose $x=x_a+x_b$. \ \ $r,t\in\R$ \\
   \hline
 structure $J$ on $\m$ & $J(u,0)=(r u, t u)$, $u\in\mathfrak{ad}$ and $J^2=-1$ \vphantom{$\frac{2^2}a$}\\
    \hline
 structure $\omega$ on $\m$ & Let $K_1$, $K_2$ be the Killing forms of $\mathfrak{su}(2)$
 on each copy of $\mathfrak{ad}$ and \\
 & $J_0(u,v)=(-v,u)$, $u,v\in\mathfrak{ad}$. Then $\omega(x,y)=(K_1+K_2)(J_0x,y)$ \hspace{22.5pt} \\
    \hline
 compatibility of $J$ and $\oo$ \hspace{-1pt} & $\omega$ is always compatible, never closed\\
    \hline
\end{tabular}

\vspace{1.5pt}

\hskip-3.5cm\begin{tabular}{ l | l | l | l |}
    \cline{2-4}
 & Lie algebra structure of $\g$ & tensor $N_J$ \& condition for $d\omega=0$ & notes
 \vphantom{$A^{\frac12}$} \\
    \cline{2-4}
A2.1 & $\m=\mathfrak{su}(2)_1\oplus\mathfrak{su}(2)_2$ &
 $N_J(x_1,y_1)=-(r^2+1)[x,y]_1+t(t-2r)[x,y]_2$ \hspace{-8.5pt}
 \vphantom{$A^{\frac12}$} & $1,2$ are not gradings, \\
 & semi-simple & & NDG always \\
    \cline{2-4}
A2.2 & $\mathfrak{m}=\mathfrak{sl}_2(\mathbb{C})$ -- simple, & $N_J(x,y)=(1+r^2-t^2+2rti)[x,y]$
 \vphantom{$A^{\frac12}$} & $N_J=0$ for $J=\pm i$, \\
 & $\mathfrak{su}(2)\oplus i\,\mathfrak{su}(2)$ & & NDG else \\
    \cline{2-4}
A2.3 & $\mathfrak{m}=\mathfrak{su}(2)_0\oplus \mathfrak{su}(2)_1$ &
 $N_J(x_0,y_0)=-(r^2+1)[x,y]_0-2rt[x,y]_1$ \vphantom{$A^{\frac12}$} & Graded \\
 & $\mathfrak{su}(2)_1$ abelian.& & NDG always \\
    \cline{2-4}
A2.4 &  $\mathfrak{m}=\mathfrak{su}(2)_1\oplus \mathfrak{su}(2)_2$ &
 $N_J(x_1,y_1)=\frac{2(r^3+r)}{t}[x,y]_1+(3\hs r^2-1)[x,y]_2$ \vphantom{${\dfrac{a}2}$}
 & Graded \vphantom{$A^{\frac{8^8}2}$} \\
 & 2-step nilpotent & & NDG always \\
    \cline{2-4}
\end{tabular}

\bigskip

\pagebreak

\begin{center}
{\bf Table A3:} isotropy $\mathfrak{h}=\mathfrak{su}(1,1)$, representation $V+\C$:
\end{center}

\hskip-2.3cm\begin{tabular}[t]{| l | l |}
    \hline
 $\mathfrak{m}$ as $\h$-representation & $\mathfrak{m}=V\oplus\C=\H_s\oplus \C$.
 $\mathfrak{h}$ acts from the left, $\mathfrak{ad}_{\Im\H_s}\subset \text{End}_\mathbb{R}(\H_s)$ \\
    \hline
 vectors \& parameters & $\ee\in\C$ - fixed. $x,y\in\H_s$ - arbitrary. $q,p,u,b\in\Im(\H_s)$,
 $\a,\b,r,\e\in\R$ \\
    \hline
 structure $J$ on $\m$ & $J=(q,i)$, $q^2=-1$. $q$ acts from the right: $J(x,\ee)=(xq,i\ee)$
 \vphantom{$A^{\frac12}$} \\
    \hline
 structure $\omega$ on $\m$ & $\omega=\omega_{\H_s}+\omega_\C$, $\omega_{\H_s}(x,y)=\Re(x b\hs\bar y)$, $\omega_\C(\ee,i\ee)=1$ ($b\neq0$) \\
    \hline
 compatibility of $J$ and $\oo$ \hspace{4pt} & $b\in\mathbb{R}q$ \\
    \hline
\end{tabular}

\vspace{1.5pt}

\hskip-3.5cm\begin{tabular}[t]{ l | l | l | l |}
    \cline{2-4}
 & Lie algebra structure of $\g$ & tensor $N_J$ \& condition for $d\omega=0$ & notes
 \vphantom{$A^{\frac12}$} \\
    \cline{2-4}
A3.1 & $[x,y]=\Re(xp\bar y)\ee$& $N_J(x,y)=(\Re(xq[q,p]\bar y)-i\Re(x[q,p]\bar y))\ee$
 & $\varepsilon\in\{1,0\}$, $p^2\neq0$ \vphantom{$A^{\frac12}$} \\
 & $[x,i\ee]=x(\frac{\varepsilon}{2}+rp)$& $N_J(x,\ee)=rx[q,p]$ & NDG unless $r=0$ \\
 & $[\ee,i\ee]=\varepsilon\ee$& $d\omega=0$ iff $\e=1$, $[p,b]=p-b$ &  \hspace{46pt} or $q\in\R p$ \\
    \cline{2-4}
A3.2 & $[x,y]=\Re(xp\bar y)\ee$& $N_J(x,y)=(\Re(xq[q,p]\bar y)-i\Re(x[q,p]\bar y))\ee$
 & $\varepsilon\in\{1,0\}$, $p^2=0$ \vphantom{$A^{\frac12}$} \\
 & $[x,\ee]=rxp$& $N_J(x,\ee)=x(rq[q,p]+[q,u])$ & $[p,u]=(\varepsilon+2\a\delta^0_r)p$ \\
 & $[x,i\ee]=x(\varepsilon+\a\delta^0_r+u)$& & $p\neq0$; \ NDG unless \\
 & $[\ee,i\ee]=\varepsilon\ee$& $d\omega=0$ iff $[u,b]=p-2(\varepsilon+s\delta^0_r)b$
 & \qquad \ $[u,q]=rq[q,p]$ \\
    \cline{2-4}
A3.3 & $[x,y]=(\Re(xp\bar y)+i\Re(xu\bar y))\ee$& $N_J(x,y)=\Re(x(q[q,p]+[q,u])\bar y)\ee+$ &
 DG$_2$ if $[u,q]\neq q[q,p]$ \vphantom{$A^{\frac12}$} \hspace{-10pt} \\
 & & \hphantom{A} $+\Re(x(q[q,u]-[q,p])\bar y)i\ee$, \ \ $d\omega\not=0$ &
 \hspace{21pt} $N_J=0$ else \\
    \cline{2-4}
A3.4 & $[x,\ee]=x(\alpha+p )$& $N_J(x,\ee)=x(q[q,p]+[q,u])$&$\varepsilon=p=0$; \vphantom{$A^{\frac12}$} \\
 & $[x,i\ee]=x(\beta+u)$& &or $\varepsilon=1$, $[p,u]= p$, \\
 & $[\ee,i\ee]=\varepsilon\ee$ & $d\omega=0$ iff $[b,p]=[b,u]=0$, $\alpha=\beta=0$& \hphantom{A}
 $p^2=0$, $\alpha=0$ \\
 & & & DG$_1$ or $N_J=0$ \\
    \cline{2-4}
A3.5 & $[x,y]=\Re(x p\hs\bar y)\ee+\mathfrak{ad}_{\Im(x\bar y)}$ &
 $N_J(x,y)=(\Re(xq[q,p]\bar y)-i\Re(x[q,p]\bar y))\ee$ & \hphantom{AA} $p\in\{i,j\}$
 \vphantom{$A^{\frac12}$} \\
 & $[x,\ee]=3\epsilon\hs x p$&$N_J(x,\ee)=3\hs\varepsilon xq[q,p]$ &
    $p=i\Rightarrow \mathfrak{g}=\mathfrak{u}(2,1)$ \\
 & \quad\qquad\ $p^2=-\epsilon=\pm1$ & $d\omega\not=0$ & $p=j\Rightarrow \mathfrak{g}=\mathfrak{gl}_3$ \\
 & & & NDG unless $p=\pm q$\\
    \cline{2-4}
\end{tabular}

\vspace{10pt}

\begin{center}
{\bf Table A4:} isotropy $\mathfrak{h}=\mathfrak{su}(1,1)$, representation $\mathfrak{ad}^\C$:
\end{center}

\hskip-2.3cm\begin{tabular}[t]{| l | l |}
    \hline
 $\mathfrak{m}$ as $\h$-representation & $\mathfrak{m}=\mathfrak{ad}\oplus\mathfrak{ad}=\mathfrak{ad}^\C$,
 the complex adjoint representation of $\mathfrak{h}$ \vphantom{$A^{\frac12}$} \\
    \hline
 structure $J$ on $\m$ & $J(u,0)=(r u, t u)$, $u\in\mathfrak{ad}$ and $J^2=-1$ \vphantom{$\frac{2^2}a$} \\
    \hline
 structure $\omega$ on $\m$ & Let $K_1$, $K_2$ be the Killing forms of $\mathfrak{su}(1,1)$
 on each copy of $\mathfrak{ad}$ and \\
 & $J_0(u,v)=(-v,u)$, $u,v\in\mathfrak{ad}$. Then $\omega(x,y)=(K_1+K_2)(J_0x,y)$ \hspace{22.5pt} \\
    \hline
 compatibility of $J$ and $\oo$ \hspace{-1pt} & $\omega$ is always compatible, never closed \\
    \hline
\end{tabular}

\vspace{1.5pt}

\hskip-3.5cm\begin{tabular}{ l | l | l | l |}
    \cline{2-4}
 & Lie algebra structure of $\g$ & tensor $N_J$ \& condition for $d\omega=0$ & notes
 \vphantom{$A^{\frac12}$} \\
    \cline{2-4}
A4.1 & $\m=\mathfrak{su}(1,1)_1\oplus\mathfrak{su}(1,1)_2$ &
 $N_J(x_1,y_1)=-(r^2+1)[x,y]_1+t(t-2r)[x,y]_2$ \hspace{-8.5pt}
 \vphantom{$A^{\frac12}$} & $1,2$ are not gradings, \\
 & semi-simple & & NDG always \\
    \cline{2-4}
A4.2 & $\mathfrak{m}=\mathfrak{sl}_2(\mathbb{C})$ -- simple &
 $N_J(x,y)=(1+r^2-t^2+2rti)[x,y]$ \vphantom{$A^{\frac12}$} & $N_J=0$ for $J=\pm i$, \\
 & $\mathfrak{su}(1,1)\oplus i\,\mathfrak{su}(1,1)$ & & NDG else \\
    \cline{2-4}
A4.3 & $\mathfrak{m}=\mathfrak{su}(1,1)_0\oplus \mathfrak{su}(1,1)_1$&
 $N_J(x_0,y_0)=-(r^2+1)[x,y]_0-2rt[x,y]_1$ \vphantom{$A^{\frac12}$} & Graded \\
 & $\mathfrak{su}(1,1)_1$ Abelian & & NDG always \\
    \cline{2-4}
A4.4 & $\mathfrak{m}=\mathfrak{su}(1,1)_1\oplus \mathfrak{su}(1,1)_2$ &
 $N_J(x_1,y_1)=\frac{2(r^3+r)}{t}[x,y]_1+(3\hs r^2-1)[x,y]_2$ \vphantom{$\dfrac{a}2$} & Graded
 \vphantom{$A^{\frac{8^8}2}$} \\
 & 2-step nilpotent & & NDG always \\
    \cline{2-4}
\end{tabular}

\pagebreak

\begin{center}
{\bf Table A5:} isotropy $\mathfrak{h}=\mathfrak{sl}_2(\C)$, representation $V+\C$:
\end{center}

\hskip-2.3cm\begin{tabular}[t]{| l | l |}
    \hline
 $\mathfrak{m}$ as $\h$-representation & $\mathfrak{m}=V\oplus\C$. $V\simeq\C^2$ \vphantom{$A^{\frac12}$} \\
    \hline
 vectors \& parameters & $\ee\in\C$ is fixed. $x,y\in V$ are arbitrary. \quad $\a,\b,\gamma,r,\e\in\R$ \\
    \hline
 structure $J$ on $\m$ & $J=i$ on both submodules $V$ and $\C$.\\
    \hline
 structure $\omega$ on $\m$ & Let $\omega_0$ be the $\mathfrak{sl}_2(\C)$-invariant form 2-on $V$
 with $\C$-values. \hspace{-1pt} \\
 & Then $\omega=\omega_V+\omega_\C$, $\omega_V=\Re(\lambda \omega_0),\lambda\in\C$,
 $\omega_\C(\ee,i\ee)=1$ \\
    \hline
 compatibility of $J$ and $\oo$ \hspace{9pt} & $\omega$ is never compatible and never closed.\\
    \hline
\end{tabular}

\vspace{1.5pt}

\hskip-3.5cm\begin{tabular}{ l | l | l | l |}
    \cline{2-4}
 & Lie algebra structure of $\g$ & tensor $N_J$ \& condition for $d\omega=0$ & notes
 \vphantom{$A^{\frac12}$} \\
    \cline{2-4}
A5.1 & $[x,\ee]=\alpha x $ & $N_J=0$ & $(\varepsilon,\alpha)\in\{(0,0),$ \vphantom{$A^{\frac12}$}\\
 & $[x,i\ee]=(\beta+\gamma i)x  $ & & \qquad\qquad $(0,1),(1,0)\}$ \hspace{5pt} \\
 & $[\ee,i\ee]=\varepsilon\ee$ & & $J$ is integrable \\
    \cline{2-4}
A5.2 & $[x,y]=(\Re+r\Im)\,\omega_0(x,y)\ee$ &
 $N_J(x,y)=2(1-r)\,i\,\overline{\omega_0(x,y)}\ee$\vphantom{$\dfrac{A}{a}$} & DG$_2$ unless $r=1$\\
    \cline{2-4}

\end{tabular}

\bigskip

\begin{center}
{\bf Table A6:} isotropy $\mathfrak{h}=\mathfrak{sl}_2(\C)$, representation $\mathfrak{ad}$:\\
The only possible structure is $\m=\mathfrak{sl}_2(\C)$ and $J$ is integrable, see \S\ref{S4}.\\
There are no $\h$-invariant nonzero 2-forms on $\m$.
\end{center}

\vskip1.5cm

%%%%%%%%%%%%%%%%%%%%%%%%%%%%%%%%%%%%%%%%%%%%%%%%%%%%%%%%%%%%%%%%%%%%%%%%%%%%

 \vspace{-3pt} \hspace{-20pt} {\hbox to 12cm{ \hrulefill }}
\vspace{8pt}

{\footnotesize \hspace{-10pt} $\bullet$ Boris Kruglikov, Henrik Winther: Institute of Mathematics and
Statistics, University of Troms\o, Troms\o\ 90-37, Norway.

\hspace{-10pt} E-mails: \ \textsc{boris.kruglikov@uit.no}, \quad
\textsc{henrik.winther@uit.no}.

\hspace{-10pt} $\bullet$ Dmitri Alekseevsky: Masaryk University, Kotlarska 2, 611 37 Brno, Czech Republic;
Institute  for Information Transmission Problems, B. Karetnuj per. 19, 127994, Moscow, Russia.
\ \ E-mail: \ \textsc{daleksee@staffmail.ed.ac.uk}} \vspace{-1pt}

\end{document}